\newtheorem{thm}{Theorem}[section]
\newtheorem{pro}[thm]{Proposition}
\newtheorem{lem}[thm]{Lemma}
\newtheorem{cor}[thm]{Corollary}
\newtheorem{rem}[thm]{Remark}
\newtheorem{problem}[thm]{Problem}
\begin{document}

\title{Orbits of $s$-representations with degenerate Gauss mappings}

\author{Osamu Ikawa}
\address{Department of General Education,
Fukushima National College of Technology,
Iwaki, Fukushima, 970-8034 Japan}
\email{ikawa@fukushima-nct.ac.jp}

\author{Takashi Sakai}
\address{Graduate School of Science,
Osaka City University,
3-3-138 Sugimoto, Sumiyoshi-ku, Osaka-shi, Osaka, 558-8585 Japan}
\email{tsakai@sci.osaka-cu.ac.jp}

\author{Hiroyuki Tasaki}
\address{Graduate School of Pure and Applied Science,
University of Tsukuba,
Tsukuba, Ibaraki, 305-8571 Japan}
\email{tasaki@math.tsukuba.ac.jp}

\subjclass[2000]{53C40 (Primary), 53C35 (Secondary)}


\keywords{Gauss mapping, tangentially degenerate,
$s$-representation, $R$-space, symmetric space}

\begin{abstract}
In this paper we study tangentially degeneracy of the orbits of
$s$-representations in the sphere.
We show that an orbit of an $s$-representation is tangentially degenerate
if and only if it is through a long root,
or a short root of restricted root system of type $G_2$.
Moreover these orbits provide many new examples of tangentially degenerate
submanifolds which satisfy the Ferus equality.
\end{abstract}

\maketitle


\section{Introduction}

A submanifold is called tangentially degenerate if its Gauss mapping is degenerate.
The investigation of tangentially degeneracy of submanifolds has long history.
For example the classification of surfaces in $\mathbf{R}^3$ with degenerate 
Gauss mapping is equivalent to the classification of flat surfaces in $\mathbf{R}^3$.
As a result, that is one of planes, cylinders, cones or tangent developable surfaces.
In this paper we shall investigate the Gauss mapping of a submanifold in the sphere,
that is defined as a mapping to a Grassmannian manifold.
The definition of the Gauss mapping, which here we deal with, will be
given in Section $2$.
Ferus \cite{Ferus} obtained a remarkable result
for tangentially degeneracy of submanifolds in the sphere.
He showed that there exists a number, so-called the Ferus number,
such that if the rank of the Gauss mapping is less than the Ferus number,
then a submanifold must be a totally geodesic sphere.
However, in general it is still unknown whether there exist submanifolds
which satisfy the Ferus equality,
that is,
the equality of the Ferus inequality.
In their papers \cite{Ishikawa, IKM, Miyaoka}, Ishikawa, Kimura and Miyaoka
studied submanifolds with degenerate Gauss mappings in the sphere
via a method of isoparametric hypersurfaces.
They showed that Cartan hypersurfaces and some focal submanifolds
of homogeneous isoparametric hypersurfaces are tangentially degenerate.
Moreover, some of them satisfy the Ferus equality.

A homogeneous isoparametric hypersurface in the sphere is obtained
as an orbit of an $s$-representation of a compact symmetric pair of rank $2$.
Therefore we shall study 
submanifolds with degenerate Gauss mappings via a method of symmetric spaces. 
Our strategy is to investigate the space of relative nullity of the orbits.
In fact, the index of relative nullity is equal to the rank of
tangentially degeneracy.
We will study the second fundamental form of the orbits of
$s$-representations by restricted root systems,
and determine their spaces of relative nullity.
As a result, we will obtain that the space of relative nullity of
the orbits through a long root, or a short root of restricted root system
of type $G_2$, is coincide with the root space of that root.
Hence these orbits are tangentially degenerate.
We note that these orbits are weakly reflective submanifolds
as we showed in the previous paper \cite{IST}.
Moreover, we will show that
the orbits of $s$-representations with degenerate Gauss mapping are exhausted
with above orbits.
Finally we shall observe that these orbits provide
many new examples of tangentially degenerate submanifolds in the sphere
which satisfy the Ferus equality.

\section{Submanifolds with degenerate Gauss mappings}
\setcounter{equation}{0}

Let $f : M \longrightarrow S^n$ be an immersion of an $l$-dimensional manifold $M$
into an $n$-dimensional sphere $S^n$.
The Gauss mapping $\gamma$ of $f$ is defined as a mapping from $M$
to a Grassmannian manifold $G_{l+1}(\mathbf{R}^{n+1})$ of all $(l+1)$-dimensional
subspaces in $\mathbf{R}^{n+1}$ by:
\begin{eqnarray*}
\gamma : M &\longrightarrow& G_{l+1}(\mathbf{R}^{n+1}) \\
x &\longmapsto& \mathbf{R} f(x) \oplus T_{f(x)}(f(M)).
\end{eqnarray*}
We denote by $r$ the maximal rank of the Gauss mapping $\gamma$ of an immersion $f$. 
If the Gauss mapping is degenerate, i.e. $r<l$,
then an immersed submanifold $f(M) \subset S^n$ is said to be
{\it tangentially degenerate} or {\it developable}.
We note that $\gamma$ is constant, i.e. $r=0$,
if and only if $f(M)$ is a part of a totally geodesic sphere.

We denote by $h$ and $A$ the second fundamental form and the shape operator of $f$,
respectively.
Chern and Kuiper \cite{CK} introduced the notion of the {\it index of relative nullity}
at $x \in M$, that is the dimension of the vector space
\begin{eqnarray*}
\mathcal N_x &=& \{ X \in T_x(M) \ |  \ h(X, Y) = 0,\ ^\forall Y \in T_x(M) \} \\
&=& \bigcap_{\xi \in T_x^\perp(M)} \ker(A_\xi).
\end{eqnarray*}
It is easy to show $\ker(d\gamma)_x = \mathcal N_x$,
therefore the index of relative nullity is equal to the degeneracy
of the Gauss mapping at each point.

Let $f : M \longrightarrow S^n$ be an immersion of a compact,
connected manifold $M$ of dimension $l$.
Ferus \cite{Ferus} showed that there exists a number $F(l)$,
which only depends on the dimension $l$ of $M$,
such that the inequality $r < F(l)$ implies $r=0$.
Then $f(M)$ must be an $l$-dimensional great sphere in $S^n$.
Here the number $F(l)$ is called the Ferus number and given by
$$
F(l) = \min \{ k \ | \ A(k)+k \geq l \},
$$
where $A(k)$ is the Adams number, that is the maximal number
of linearly independent vector fields at each point
on the $(k-1)$-dimensional sphere $S^{k-1}$.
Any positive integer $k$ can be written as $(2s + 1)2^t$
by some non-negative integers $s$ and $t$.
We write $t = c + 4d$ by some $0 \leq c \leq 3$ and $0 \leq d$.
In this situation the Adams number $A(k)$ can be calculated by
$$
A(k) = 2^c + 8d -1.
$$

Regarding the Ferus inequality,
Ishikawa, Kimura and Miyaoka posed the following problem:

\begin{problem}[\cite{IKM}]
\begin{enumerate}
\item Is the inequality $r < F(l)$ best possible for the implication $r=0$?
Do there exist tangentially degenerate immersions $M^l \to S^n$ with $r = F(l)$?
\item If the above problem is true,
classify tangentially degenerate immersions $M^l \to S^n$ with $r = F(l)$.
\end{enumerate}
\end{problem}

For these problems, they obtained the following results using isoparametric
hypersurfaces in the sphere.
It is well-known that the number $g$ of distinct principal curvatures
of an isoparametric hypersurface in the sphere
is $1$, $2$, $3$, $4$ or $6$.
A minimal isoparametric hypersurface with $g=3$
is called a Cartan hypersurface.

\begin{thm}[\cite{Ishikawa}]
A homogeneous compact hypersurface in the real projective space $\mathbf{R}P^n$
which is tangentially degenerate is projectively equivalent to
a hyperplane or a Cartan hypersurface.
\end{thm}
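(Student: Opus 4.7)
My plan is to lift the problem from $\mathbf{R}P^n$ to $S^n$ and then exploit the classification of homogeneous isoparametric hypersurfaces. Given a homogeneous hypersurface $M\subset\mathbf{R}P^n$, the preimage $\tilde M=\pi^{-1}(M)$ under the double cover $\pi\colon S^n\to\mathbf{R}P^n$ is a hypersurface invariant under the antipodal map, homogeneous under the lifted isometry group. Since $\pi$ is a local isometry, the Gauss mapping and the index of relative nullity transfer component-wise, so $M$ is tangentially degenerate if and only if $\tilde M$ is.

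Next I would invoke two classical results. By the theorem of Hsiang--Lawson every homogeneous hypersurface in a sphere is isoparametric, and by M\"unzner's theorem the number $g$ of distinct principal curvatures of an isoparametric hypersurface in $S^n$ belongs to $\{1,2,3,4,6\}$, with the curvatures having the form $\cot(\theta+k\pi/g)$ for $k=0,\dots,g-1$ and some $\theta\in(0,\pi/g)$ parametrizing the parallel family. Since a hypersurface has a one-dimensional normal bundle, the relative nullity at each point is exactly the $0$-eigenspace of the shape operator in the unit normal direction, so $\tilde M$ is tangentially degenerate if and only if $0$ appears among its principal curvatures.

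The heart of the argument is then a numerical check: for which $g$ and $\theta\in(0,\pi/g)$ does there exist $k$ with $\cot(\theta+k\pi/g)=0$, i.e.\ $\theta=\pi/2-k\pi/g$? For $g=1$ only $\theta=\pi/2$ works, giving a great sphere; for $g=3$ only $\theta=\pi/6$ (with $k=1$) works, yielding principal curvatures $\sqrt{3},0,-\sqrt{3}$, which is precisely the minimal Cartan hypersurface; for $g=2,4,6$ every candidate $\theta=\pi/2-k\pi/g$ falls outside the open interval $(0,\pi/g)$. Hence $\tilde M$ is either a great sphere or a Cartan hypersurface, and both are antipodally invariant, so they descend to a hyperplane and a Cartan hypersurface in $\mathbf{R}P^n$ respectively, yielding the claimed classification up to projective equivalence.

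The main obstacle, beyond invoking Hsiang--Lawson and M\"unzner, is handling the interface between $\mathbf{R}P^n$ and $S^n$ rigorously: one must verify that the transitive isometry action on $M$ lifts to act on each connected component of $\tilde M$, that this lift still fits into a single parallel family on $S^n$ so that M\"unzner's description of the principal curvatures applies verbatim, and that the two resulting classes on $S^n$ correspond, up to projective equivalence, to the hyperplane and the Cartan hypersurface named in the statement. Once these compatibility points are settled, the theorem reduces to the trigonometric case analysis above.
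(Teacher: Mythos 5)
First, a point of reference: the paper does not prove this statement at all --- it is quoted verbatim from Ishikawa's paper \cite{Ishikawa} as background, so there is no internal proof to compare your argument against. I can only assess your proposal on its own terms.

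Within the metric setting, your argument is the standard and essentially correct one: lifting to $S^n$, invoking Hsiang--Lawson to get isoparametricity, using M\"unzner's form $\cot(\theta+k\pi/g)$ of the principal curvatures, and observing that for a hypersurface the relative nullity is exactly $\ker A_\xi$, so tangential degeneracy is equivalent to $0$ being a principal curvature. Your trigonometric case analysis is right: only $g=1$, $\theta=\pi/2$ (great sphere) and $g=3$, $\theta=\pi/6$ (the minimal member of the family, i.e.\ the Cartan hypersurface) occur, and $g=2,4,6$ are excluded. The descent issues you flag are also genuinely unproblematic: the equator clearly descends, and the minimal $g=3$ hypersurface is the zero level set of the degree-$3$ Cartan--M\"unzner polynomial $F$, which is odd, so $F(-x)=-F(x)$ forces antipodal invariance.

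The genuine gap is at the very first step, in the meaning of ``homogeneous.'' Ishikawa's theorem is a statement of projective differential geometry: the hypersurface is assumed to be an orbit of a subgroup of the projective transformation group $PGL(n+1,\mathbf{R})$, which is precisely why the conclusion is only ``projectively equivalent to'' (rather than ``congruent to'') a hyperplane or a Cartan hypersurface. Your proof tacitly replaces this by homogeneity under the isometry group $PO(n+1)$. For a projectively homogeneous hypersurface the lift $\pi^{-1}(M)\subset S^n$ is invariant under a subgroup of $GL(n+1,\mathbf{R})$ that need not preserve the round metric, so Hsiang--Lawson and M\"unzner do not apply to it as it stands. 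One must first show that a compact, tangentially degenerate, projectively homogeneous hypersurface is, after a projective transformation, an orbit of a compact group of isometries (or argue projectively throughout, exploiting the linear ruling of a developable hypersurface along the fibres of the Gauss map); this reduction is the substantive content of Ishikawa's proof and is absent from yours. If the hypothesis were strengthened to metric homogeneity, your argument would be complete modulo the routine lifting details you already list.
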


\begin{thm}[\cite{Miyaoka}]
When $M$ is a homogeneous isoparametric hypersurface in the sphere with $g=6$,
then both focal submanifolds of $M$ are tangentially degenerate.
Moreover, these submanifolds satisfy the Ferus equality.
\end{thm}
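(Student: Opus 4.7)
The plan is to exploit the classification of homogeneous isoparametric hypersurfaces with $g=6$: for multiplicity $m\in\{1,2\}$, $M$ is a principal orbit of the $s$-representation of a rank-two compact symmetric pair whose restricted root system $\Sigma$ is of type $G_2$. The two focal submanifolds then arise as singular orbits through (unit multiples of) a long root vector $H_\alpha\in\mathfrak a$ and a short root vector $H_{\alpha'}\in\mathfrak a$ respectively, so the problem reduces to computing the relative nullity of each of these two orbits.

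First I would fix the restricted root space decomposition $\mathfrak p=\mathfrak a\oplus\bigoplus_{\beta\in\Sigma^+}\mathfrak p_\beta$ and identify the tangent space of $M_\alpha=K\cdot H_\alpha$ as $\mathrm{ad}(\mathfrak k)H_\alpha=\bigoplus_{\beta\not\perp\alpha}\mathfrak p_\beta$. In type $G_2$ every root has a unique positive root $\alpha^\perp$ perpendicular to it (of the opposite length), so the tangent space has five root summands while the normal space in $S^n$ is $(\mathfrak a\cap H_\alpha^\perp)\oplus\mathfrak p_{\alpha^\perp}$. Then I would compute the second fundamental form via the standard bracket formula $h(X,[\eta,H_\alpha])=(\mathrm{ad}(\eta)X)^{\perp}$ valid for orbits of isometric actions. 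For $X\in\mathfrak p_\alpha$ and tangent $Y\in\mathfrak p_\gamma$ with $\eta\in\mathfrak k_\gamma$, one has $\mathrm{ad}(\eta)X\in\mathfrak p_{\gamma+\alpha}\oplus\mathfrak p_{|\gamma-\alpha|}\oplus\mathfrak a$. The decisive arithmetic is that $|\alpha\pm\alpha^\perp|^2=|\alpha|^2+|\alpha^\perp|^2=4$ in $G_2$, while all $G_2$ roots have length $1$ or $\sqrt 3$; hence $\gamma\pm\alpha$ is never equal to $\alpha^\perp$, so the normal component in $\mathfrak p_{\alpha^\perp}$ vanishes. The $\mathfrak a$-component appears only when $\gamma=\alpha$ and then lies in $\mathbb R H_\alpha$, which is radial in $S^n$ and therefore contributes nothing to the normal space. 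This gives $\mathfrak p_\alpha\subseteq\mathcal N_{H_\alpha}$ and tangential degeneracy of $M_\alpha$; the same argument applies verbatim to the short root orbit, since the length obstruction $|\alpha'\pm\alpha'^\perp|^2=4$ still excludes a root.

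To verify the Ferus equality, I would first check that $\mathcal N_{H_\alpha}=\mathfrak p_\alpha$ exactly by exhibiting, for each remaining tangent summand $\mathfrak p_\gamma$ with $\gamma\neq\alpha$, a normal direction on which the shape operator acts nontrivially (another nonvanishing structure-constant computation). Granting this, $l=5m$ and the index of relative nullity is $m$, so the rank of the Gauss mapping is $r=4m$. Using $k=(2s+1)2^{c+4d}$ and $A(k)=2^c+8d-1$, direct arithmetic yields $F(5)=4$ and $F(10)=8$, confirming $r=F(l)$ in both cases.

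The hardest part should be the bracket verification: the length argument cleanly rules out contributions to $\mathfrak p_{\alpha^\perp}$, but confirming that the $\mathfrak a$-contribution only occurs in the radial direction (and matching the exact projection using $[\mathfrak k_\alpha,\mathfrak p_\alpha]\subseteq\mathbb R H_\alpha$) requires the precise $G_2$ structure constants and $\mathrm{ad}$-invariance of the inner product. The subsequent identification of the full nullity space and the Ferus-number arithmetic are routine by comparison.
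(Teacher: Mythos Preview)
The paper does not prove this theorem itself; it is quoted from \cite{Miyaoka} as background in Section~2. What the paper \emph{does} prove is Theorem~\ref{thm:main}, which contains this $g=6$ statement as the special case of restricted root system $G_2$ (the two rank-$2$ pairs $(\mathfrak g_2,\mathfrak{so}(4))$ and $(\mathfrak g_2\oplus\mathfrak g_2,\mathfrak g_2)$, with multiplicities $m=1,2$). Your proposal is correct and is essentially the paper's own argument for that case: your length obstruction $|\alpha\pm\alpha^\perp|^2=|\alpha|^2+|\alpha^\perp|^2\notin\{|\text{short}|^2,|\text{long}|^2\}$ is exactly the verification of condition~(b) in Proposition~\ref{pro:root and tangentially degeneracy} (done in Proposition~\ref{pro:4-6}), your claim $[\mathfrak k_\alpha,\mathfrak p_\alpha]\subset\mathbf R\alpha$ is the $[\mathfrak k_\lambda,\mathfrak a\cap\lambda^\perp]=0$ step there, and your Ferus arithmetic $F(5)=4$, $F(10)=8$ is reproduced verbatim at the end of \S4.3.

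The only methodological difference is cosmetic: where the paper packages the computation as invariance of $\gamma(\lambda)^\perp$ under $\mathrm{Ad}(K(\lambda))$ to conclude the Gauss map is constant along $\mathrm{Ad}(K(\lambda))\lambda$, you compute $h(X,Y)=(\mathrm{ad}(\eta)X)^\perp$ directly and read off $\mathfrak p_\alpha\subset\mathcal N_{H_\alpha}$. Both routes reduce to the same bracket identity $[\mathfrak k_\gamma,\mathfrak p_\alpha]\cap(\mathfrak a\cap\alpha^\perp\oplus\mathfrak p_{\alpha^\perp})=\{0\}$, so neither buys anything the other does not. Your plan to pin down $\mathcal N_{H_\alpha}=\mathfrak p_\alpha$ exactly via nonvanishing shape operators on the remaining summands is likewise what Proposition~\ref{pro:tangentially degenerate} and the surrounding discussion accomplish.
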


\begin{thm}[\cite{IKM}]
When $M$ is a homogeneous isoparametric hypersurface in the sphere with $g=4$,
then one of focal submanifolds of $M$ is tangentially degenerate,
and another one is not.
Moreover, some of them satisfy the Ferus equality.
\end{thm}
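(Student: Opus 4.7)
The plan is to exploit the classification of homogeneous isoparametric hypersurfaces with $g=4$ via $s$-representations, consistent with the approach developed in this paper. By the Hsiang--Lawson--Takagi--Takahashi theorem, a homogeneous isoparametric hypersurface in $S^n$ with $g=4$ is realized as a principal orbit of the isotropy representation of a compact Riemannian symmetric pair $(G,K)$ of rank $2$ whose restricted root system is of type $B_2$, $C_2$, or $BC_2$. The two focal submanifolds are then the two singular orbits, each arising as the orbit of the $s$-representation through a simple root: a long root in one case and a short root in the other.

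Granted this reduction, I would invoke the main result of the paper, advertised in the introduction: for an orbit of an $s$-representation through a root $\alpha$, the space of relative nullity $\mathcal{N}_x$ coincides with the root space of $\alpha$ when $\alpha$ is long, and is trivial when $\alpha$ is short and the ambient restricted root system is not of type $G_2$. Since $g=4$ excludes $G_2$, this yields a nontrivial $\mathcal{N}_x$ for the long-root focal submanifold (so it is tangentially degenerate) and $\mathcal{N}_x = \{0\}$ for the short-root focal submanifold (so it is not), establishing the asserted dichotomy.

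For the Ferus equality, I would proceed case by case through the rank-$2$ symmetric pairs realizing $B_2$, $C_2$, or $BC_2$. In each case the dimension $l$ of the long-root focal orbit and the rank $r = l - \dim \mathfrak{g}_\alpha$ of its Gauss mapping can be computed directly from the restricted root multiplicities. Comparing $r$ with the Ferus number $F(l)$, obtained from the Adams number via the formula recalled in Section $2$, pinpoints precisely those pairs that realize the Ferus equality and thereby produces the advertised examples.

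The main obstacle is verifying, within the $s$-representation framework, that for short roots in $B_2$, $C_2$, and $BC_2$ the space of relative nullity is actually trivial rather than merely proper in the root space. This requires a detailed analysis of the second fundamental form expressed through brackets between root vectors in the Cartan decomposition, and relies on the chain-length properties that distinguish these root systems from $G_2$.
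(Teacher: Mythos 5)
The paper offers no proof of this statement: it is quoted from \cite{IKM} as background, and the argument there goes through the geometry of isoparametric families and their focal maps rather than through restricted root systems. Your route is therefore necessarily different from the source, but it is the one the authors themselves indicate in the Remark closing Section~4: the rank-$2$ case of Theorem~\ref{thm:main} recovers the results of \cite{IKM}. The reduction is essentially sound. A homogeneous isoparametric hypersurface with $g=4$ is a principal orbit of the $s$-representation of a rank-$2$ pair with restricted root system $B_2$, $C_2$ or $BC_2$, and the two focal submanifolds are the singular orbits through the dual vectors $H_{\lambda_1}$, $H_{\lambda_2}$ of the two simple roots --- not through the simple roots themselves, a small slip in your write-up, though harmless here since in each of these three systems $H_{\lambda_1}$ and $H_{\lambda_2}$ are proportional to roots, one in a long-root direction and one not. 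Theorem~\ref{thm:main} then gives the dichotomy, and the Ferus-equality examples drop out of Table~\ref{table:list of tangentially degenerate orbits} together with Proposition~\ref{pro:Ferus number}. The caveat is the step you yourself flag as the main obstacle: the non-degeneracy of the focal orbit in the non-long-root direction is exactly the hard half of Theorem~\ref{thm:main}, whose proof occupies most of Section~4 (the Hermitian argument via the complex structure, the normal-real-form computation, and the explicit matrix computations for the $SO$ and $Sp$ Grassmannian pairs, plus $(\mathfrak e_6, \mathfrak{so}(10)\oplus\mathbf R)$ for $BC_2$). So your proposal is correct as a derivation from the paper's main theorem, but it is not an independent proof: it imports that theorem wholesale at the decisive point.
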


\section{Orbits of $s$-representations}
\setcounter{equation}{0}

A linear isotropy representation of a Riemannian symmetric pair
is called an $s$-representation.
In the following section, we will study orbits of $s$-representations
which are tangentially degenerate.
For this purpose, we shall provide some fundamental notions
of orbits of $s$-representations in this section.

Let $G$ be a compact, connected Lie group
and $K$ a closed subgroup of $G$.
Assume that $\theta$ is an involutive automorphism of $G$
and $G_\theta^0 \subset K \subset G_\theta$, where
$$
G_\theta = \{g \in G \mid \theta(g) = g\}
$$
and $G_\theta^0$ is the identity component of $G_\theta$.
Then $(G, K)$ is a compact symmetric pair with respect to $\theta$.
We denote the Lie algebras of $G$ and $K$ 
by $\mathfrak g$ and $\mathfrak k$, respectively.
The involutive automorphism of $\mathfrak g$ induced from $\theta$
will be also denoted by $\theta$.
Then we have
$$
\mathfrak k = \{X \in \mathfrak g \mid \theta(X) = X\}.
$$
Take an inner product $\langle\; ,\; \rangle$ on $\mathfrak g$
which is invariant under $\theta$ and the adjoint representation of $G$.
Set
$$
\mathfrak m = \{X \in \mathfrak g \mid \theta(X) = - X\},
$$
then we have a canonical orthogonal direct sum decomposition
$$
\mathfrak g = \mathfrak k + \mathfrak m.
$$

Fix a maximal abelian subspace $\mathfrak a$ in $\mathfrak m$
and a maximal abelian subalgebra $\mathfrak t$ in $\mathfrak g$
containing $\mathfrak a$.
For $\alpha \in \mathfrak t$ we set
\begin{equation} \label{eq:root space}
\tilde{\mathfrak g}_\alpha
= \{ X \in \mathfrak g^{\mathbf C} \mid
[H, X] = \sqrt{-1} \langle \alpha, H \rangle X\; (H \in \mathfrak t)\}
\end{equation}
and define the root system $\tilde R$ of $\mathfrak g$ by
\begin{equation} \label{eq:root system}
\tilde R = 
\{\alpha \in \mathfrak t - \{0\} \mid
\tilde{\mathfrak g}_\alpha \ne \{0\}\}.
\end{equation}
For $\lambda \in \mathfrak a$ we set
$$
\mathfrak g_\lambda
= \{X \in \mathfrak g^{\mathbf C} \mid
[H, X] = \sqrt{-1} \langle \lambda, H \rangle X \; (H \in \mathfrak a)\}
$$
and define the restricted root system $R$ of $(\mathfrak g, \mathfrak k)$ by
$$
R = \{ \lambda \in \mathfrak a - \{0\} \mid \mathfrak g_\lambda \ne \{0\}\}.
$$
Set
$$
\tilde R_0 = \tilde R \cap \mathfrak k
$$
and denote the orthogonal projection from $\mathfrak t$ to $\mathfrak a$ by
$H \mapsto \bar H$.
Then we have
$$
R = \{\bar\alpha \mid \alpha \in \tilde R - \tilde R_0\}.
$$
We take a basis of $\mathfrak t$ extended from a basis of $\mathfrak a$
and define the lexicographic orderings $>$ on $\mathfrak a$ and $\mathfrak t$
with respect to these bases.
Then for $H \in \mathfrak t$, $\bar H > 0$ implies $H > 0$.
We denote by $\tilde F$ the set of simple roots of $\tilde R$
with respect to the ordering $>$.
Set
$$
\tilde F_0 = \tilde F \cap \tilde R_0,
$$
then the set of simple roots $F$ of $R$ with respect to the ordering $>$
is given by
$$
F = \{\bar\alpha \mid \alpha \in \tilde F - \tilde F_0\}.
$$
We set
$$
\tilde R_+ = \{\alpha \in \tilde R \mid \alpha > 0\}, \qquad
R_+ = \{\lambda \in R \mid \lambda > 0\}.
$$
Then we have
$$
R_+ = \{\bar\alpha \mid \alpha \in \tilde R_+ - \tilde R_0\}.
$$
We also set
$$
\mathfrak k_0 = \{X \in \mathfrak k \mid [X, H] = 0\; (H \in \mathfrak a)\},
$$
and define
$$
\mathfrak k_\lambda
= \mathfrak k \cap (\mathfrak g_\lambda + \mathfrak g_{-\lambda}), \qquad
\mathfrak m_\lambda
= \mathfrak m \cap (\mathfrak g_\lambda + \mathfrak g_{-\lambda})
$$
for $\lambda \in R_+$.
Under these notations, we have the following lemma.

\begin{lem} \label{lem:basis}
\begin{enumerate}
\item We have orthogonal direct sum decompositions
$$
\mathfrak k = \mathfrak k_0 + \sum_{\lambda \in R_+} \mathfrak k_\lambda,
\qquad
\mathfrak m = \mathfrak a + \sum_{\lambda \in R_+} \mathfrak m_\lambda.
$$
\item If $H \in \mathfrak a$ and $\langle \lambda, H \rangle \neq 0$,
then $\mathrm{ad}(H)$ gives a linear isomorphism
between $\mathfrak m_\lambda$ and $\mathfrak k_\lambda$.
\end{enumerate}
\end{lem}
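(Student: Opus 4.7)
The plan is to derive both parts from the simultaneous eigenspace decomposition of the commuting family $\{\mathrm{ad}(H) : H \in \mathfrak a\}$ on $\mathfrak g^{\mathbf C}$, together with the compatibility of $\theta$ and complex conjugation (with respect to the compact real form $\mathfrak g$) with this decomposition. For~(1) I would start from the restricted root space decomposition $\mathfrak g^{\mathbf C} = \mathfrak g_0 + \sum_{\lambda \in R}\mathfrak g_\lambda$, where $\mathfrak g_0$ denotes the centralizer of $\mathfrak a$ in $\mathfrak g^{\mathbf C}$. Because $\theta|_{\mathfrak a} = -\mathrm{Id}$ one checks $\theta(\mathfrak g_\lambda) = \mathfrak g_{-\lambda}$, and because $\mathrm{ad}(H)$ has purely imaginary eigenvalues on the compact algebra $\mathfrak g$, complex conjugation with respect to $\mathfrak g$ likewise sends $\mathfrak g_\lambda$ to $\mathfrak g_{-\lambda}$. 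Thus $\mathfrak g_\lambda + \mathfrak g_{-\lambda}$ is stable under both conjugation and $\theta$; its real points form a subspace of $\mathfrak g$ on which $\theta$ has order~$2$, and the $\pm 1$-eigenspaces of $\theta$ on this real subspace are precisely $\mathfrak k_\lambda$ and $\mathfrak m_\lambda$. Similarly $\mathfrak g_0 \cap \mathfrak g$ is $\theta$-stable and decomposes as $\mathfrak k_0 + (\mathfrak m \cap \mathfrak z_{\mathfrak g}(\mathfrak a))$, the second summand being $\mathfrak a$ itself by maximality of $\mathfrak a$ in $\mathfrak m$. Orthogonality of the pieces is automatic: $\mathrm{ad}$-invariance of $\langle\,,\,\rangle$ makes each $\mathrm{ad}(H)$ skew-symmetric, so real eigenspaces for distinct eigenvalues are mutually orthogonal.

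For~(2) I would first note that for $X \in \mathfrak m_\lambda$ the bracket $[H, X]$ lies in $[\mathfrak m, \mathfrak m] \subset \mathfrak k$ and remains in $\mathfrak g_\lambda + \mathfrak g_{-\lambda}$ by the usual root bracket rule, so $\mathrm{ad}(H)$ carries $\mathfrak m_\lambda$ into $\mathfrak k_\lambda$ and, symmetrically, $\mathfrak k_\lambda$ into $\mathfrak m_\lambda$. The key computation is for $\mathrm{ad}(H)^2$: on $\mathfrak g_{\pm\lambda}$ it acts as multiplication by $(\pm\sqrt{-1}\langle\lambda, H\rangle)^2 = -\langle\lambda, H\rangle^2$, hence $\mathrm{ad}(H)^2 = -\langle\lambda, H\rangle^2\,\mathrm{Id}$ on $\mathfrak g_\lambda + \mathfrak g_{-\lambda}$ and, after restriction, on the real subspaces $\mathfrak m_\lambda$ and $\mathfrak k_\lambda$ as well. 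Since $\langle\lambda, H\rangle \ne 0$, the map $-\frac{1}{\langle\lambda, H\rangle^2}\mathrm{ad}(H) : \mathfrak k_\lambda \to \mathfrak m_\lambda$ is a two-sided inverse of $\mathrm{ad}(H) : \mathfrak m_\lambda \to \mathfrak k_\lambda$, yielding the asserted linear isomorphism.

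I do not anticipate a substantial obstacle; the only item requiring a little care is the bookkeeping between complex subspaces of $\mathfrak g^{\mathbf C}$ and real subspaces of $\mathfrak g$, in particular verifying that $\mathfrak k_\lambda$ together with $\mathfrak m_\lambda$ exhaust the real form of $\mathfrak g_\lambda + \mathfrak g_{-\lambda}$, so that the scalar identity for $\mathrm{ad}(H)^2$ established in $\mathfrak g^{\mathbf C}$ transports directly to those real spaces.
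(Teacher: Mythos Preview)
Your argument is correct and is the standard route to this lemma. Note, however, that the paper does \emph{not} supply its own proof of Lemma~\ref{lem:basis}: the statement is quoted as a well-known structural fact about compact symmetric pairs and is used freely thereafter. So there is no ``paper's proof'' to compare against; your write-up would serve perfectly well as the omitted verification.

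A couple of minor points worth tightening. First, in your orthogonality argument you speak of ``real eigenspaces for distinct eigenvalues'' of $\mathrm{ad}(H)$, but on $\mathfrak g$ the operator $\mathrm{ad}(H)$ has no nonzero real eigenvalue; what you really use is that $\mathrm{ad}(H)^2$ is symmetric (being the square of a skew-symmetric operator) and acts on $\mathfrak m_\lambda$ (respectively $\mathfrak k_\lambda$) by the scalar $-\langle\lambda,H\rangle^2$, so the orthogonality of $\mathfrak m_\lambda$ and $\mathfrak m_\mu$ for distinct $\lambda,\mu\in R_+$ follows by choosing $H$ with $\langle\lambda,H\rangle^2\neq\langle\mu,H\rangle^2$, which is possible since $\lambda\neq\pm\mu$. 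Second, when you write $[H,X]\in[\mathfrak m,\mathfrak m]\subset\mathfrak k$, it is perhaps cleaner to invoke $[\mathfrak a,\mathfrak m]\subset\mathfrak k$ directly from $\theta$-equivariance, and likewise $[\mathfrak a,\mathfrak k]\subset\mathfrak m$; this is exactly what you need, and it avoids any appearance of using more than the Cartan relations. With these cosmetic fixes your sketch is complete.
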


We define a subset $D$ of $\mathfrak a$ by
$$
D = \bigcup_{\lambda \in R}
\{H \in \mathfrak a \mid \langle \lambda, H \rangle = 0\}.
$$
A connected component of $\mathfrak a - D$ is a Weyl chamber.
We set
$$
C = \{H \in \mathfrak a \mid
\langle \lambda, H \rangle > 0\; (\lambda \in F)\}.
$$
Then $C$ is an open convex subset of $\mathfrak a$ and
the closure of $C$ is given by
$$
\bar C = \{H \in \mathfrak a \mid
\langle \lambda, H \rangle \ge 0\; (\lambda \in F)\}.
$$
For a subset $\Delta \subset F$,
we define
$$
C^\Delta = 
\{H \in \bar C \mid \langle \lambda, H \rangle > 0\; (\lambda \in \Delta),\;
\langle \mu, H \rangle = 0\; (\mu \in F-\Delta)\}.
$$

\begin{lem} \label{lem:docomposition of Weyl}
\begin{enumerate}
\item
For $\Delta_1\subset F$, the decomposition
$$
\overline{C^{\Delta_1}}=\bigcup_{\Delta\subset\Delta_1}C^\Delta
$$
is a disjoint union.
In particular,
$\displaystyle\bar C = \bigcup_{\Delta \subset F} C^\Delta$
is a disjoint union.
\item
For $\Delta_1, \Delta_2 \subset F$, $\Delta_1 \subset \Delta_2$
if and only if $C^{\Delta_1} \subset \overline{C^{\Delta_2}}$.
\end{enumerate}
\end{lem}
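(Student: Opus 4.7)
The plan is to reduce both parts of the lemma to straightforward linear algebra using the fact that the simple roots in $F$ are linearly independent on $\mathfrak a$ (modulo the subspace orthogonal to all roots), so that we can choose a dual family $\{H_\lambda^*\}_{\lambda \in F} \subset \mathfrak a$ satisfying $\langle \mu, H_\lambda^*\rangle = \delta_{\lambda\mu}$ for $\mu \in F$. With such a dual family in hand, the relative interior description of $C^{\Delta_1}$ as a face of $\bar C$ is manifest, and the rest is bookkeeping on signs.

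For part (1), I would first identify the closure explicitly as
$$
\overline{C^{\Delta_1}} = \{H \in \mathfrak a \mid \langle \lambda,H\rangle \ge 0\ (\lambda \in \Delta_1),\ \langle \mu,H\rangle = 0\ (\mu \in F - \Delta_1)\}.
$$
The inclusion $\subset$ is immediate by continuity of $\langle \lambda,\cdot\rangle$. For the reverse inclusion, given an $H$ in the right-hand side set $\Delta := \{\lambda \in \Delta_1 \mid \langle \lambda,H\rangle > 0\}$; then $H \in C^\Delta$, and perturbing by $H_t := H + t\sum_{\nu \in \Delta_1 - \Delta}H_\nu^*$ for small $t > 0$ gives points of $C^{\Delta_1}$ converging to $H$, since $\langle \lambda,H_t\rangle > 0$ for $\lambda \in \Delta$, $\langle \mu,H_t\rangle = t > 0$ for $\mu \in \Delta_1 - \Delta$, and $\langle \nu,H_t\rangle = 0$ for $\nu \in F - \Delta_1$. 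The same computation shows each $C^\Delta$ with $\Delta \subset \Delta_1$ is contained in $\overline{C^{\Delta_1}}$, and conversely the first identification together with the assignment $H \mapsto \{\lambda \in \Delta_1 \mid \langle \lambda,H\rangle > 0\}$ puts every $H \in \overline{C^{\Delta_1}}$ into some $C^\Delta$ with $\Delta \subset \Delta_1$. Disjointness is automatic: if $H \in C^{\Delta} \cap C^{\Delta'}$ and $\lambda \in \Delta \triangle \Delta'$ then $\langle \lambda,H\rangle$ would have to be simultaneously positive and zero. Specializing $\Delta_1 = F$ yields the statement for $\bar C$.

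For part (2), the implication $\Delta_1 \subset \Delta_2 \Rightarrow C^{\Delta_1} \subset \overline{C^{\Delta_2}}$ is the case $\Delta = \Delta_1$ of part (1) applied to $\Delta_2$. Conversely, suppose $C^{\Delta_1} \subset \overline{C^{\Delta_2}}$ and pick any $H \in C^{\Delta_1}$; by part (1) applied to $\Delta_2$, there exists $\Delta \subset \Delta_2$ with $H \in C^\Delta$, and then the disjointness of the decomposition of $\bar C$ in part (1) forces $\Delta = \Delta_1$, hence $\Delta_1 \subset \Delta_2$.

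The only real step requiring justification is the existence of the dual family $\{H_\lambda^*\}$, which is an immediate consequence of the linear independence of the simple roots in $F$ on the span of $R$ in $\mathfrak a$; I do not anticipate any other obstacle, since once the perturbation argument is in place the two statements are essentially combinatorial consequences of the sign conditions defining $C^\Delta$.
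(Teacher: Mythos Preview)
Your argument is correct. The paper does not give an explicit proof of this lemma; it states the result and immediately afterwards introduces the dual vectors $H_\lambda \in \mathfrak a$ satisfying $\langle H_\lambda,\mu\rangle = \delta_{\lambda\mu}$ for $\mu \in F$, together with the description
\[
C^\Delta = \Bigl\{\sum_{\lambda \in \Delta} t_\lambda H_\lambda \;\Big|\; t_\lambda > 0\Bigr\},
\]
which is exactly the tool your proof relies on (your $H_\lambda^*$ is the paper's $H_\lambda$). So your approach is entirely in line with the paper's setup, and your perturbation $H_t = H + t\sum_{\nu \in \Delta_1 - \Delta} H_\nu^*$ and the sign bookkeeping are the natural way to fill in the omitted details. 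One minor point: in part~(2) you implicitly use that $C^{\Delta_1}$ is nonempty in order to pick $H \in C^{\Delta_1}$; this is clear from the description above (take $H = \sum_{\lambda \in \Delta_1} H_\lambda^*$), but it is worth saying explicitly.
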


For each $\lambda \in F$,
we take $H_\lambda \in \mathfrak a$ such that
$$
\langle H_\lambda, \mu \rangle =
\left\{
\begin{array}{ll}
1 & (\mu = \lambda), \\
0 & (\mu \neq \lambda)
\end{array}
\right.
\quad (\mu \in F).
$$
Then, for $\Delta \subset F$, we have
$$
C^\Delta =
\left\{\left.
\sum_{\lambda \in \Delta}t_\lambda H_\lambda\; 
\right|\;
t_\lambda > 0
\right\}.
$$
We set
$$
R^\Delta = R \cap (F - \Delta)_{\mathbf Z}, \qquad
R^\Delta_+ = R^\Delta \cap R_+.
$$

Under these notations, we have the following lemma.

\begin{lem}[\cite{HKT00}] \label{lem:determination of R_+(H) from H}
Fix a subset $\Delta \subset F$.
For $H \in C^\Delta$ we have the following:
\begin{enumerate}
\item $R^\Delta = \{\lambda \in R\; |\; \langle \lambda, H \rangle = 0\}$,
\item $R^\Delta_+ = \{\lambda \in R_+\; |\; \langle \lambda, H \rangle = 0\}$.
\end{enumerate}
\end{lem}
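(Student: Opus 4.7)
The plan is to exploit the explicit parametrization of $C^\Delta$ together with the standard fact that every restricted root has coefficients of a common sign when expanded in the basis of simple roots $F$. Specifically, since $H \in C^\Delta$ can be written as $H = \sum_{\mu \in \Delta} t_\mu H_\mu$ with $t_\mu > 0$, the defining duality $\langle H_\mu, \nu \rangle = \delta_{\mu\nu}$ for $\mu, \nu \in F$ yields
\[
\langle \nu, H \rangle = \begin{cases} t_\nu > 0 & (\nu \in \Delta), \\ 0 & (\nu \in F - \Delta). \end{cases}
\]
This is the only geometric input needed; the rest is linear algebra inside the root lattice.

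For part (1), I would fix $\lambda \in R$ and write $\lambda = \sum_{\nu \in F} n_\nu \nu$ with integers $n_\nu$ that are either all nonnegative or all nonpositive (a standard property of a simple system). Pairing with $H$ gives $\langle \lambda, H \rangle = \sum_{\nu \in \Delta} n_\nu t_\nu$. Since the $t_\nu$ are strictly positive and the $n_\nu$ have a common sign, this sum vanishes if and only if $n_\nu = 0$ for every $\nu \in \Delta$, which in turn is exactly the condition $\lambda \in (F - \Delta)_{\mathbf{Z}}$, i.e.\ $\lambda \in R^\Delta$. Both inclusions of the claimed equality then follow.

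Part (2) is immediate by intersecting both sides of (1) with $R_+$, using the definitions $R^\Delta_+ = R^\Delta \cap R_+$.

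The argument is essentially a short calculation, so there is no serious obstacle; the only point that must be invoked carefully is the common-sign property of the coefficients of a root in the simple basis, which is what converts the single scalar equation $\langle \lambda, H \rangle = 0$ into the vanishing of each individual coefficient $n_\nu$ for $\nu \in \Delta$. Without that sign information, positivity of the $t_\mu$ alone would not suffice.
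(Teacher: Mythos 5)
Your proof is correct: the computation $\langle \lambda, H\rangle = \sum_{\nu\in\Delta} n_\nu t_\nu$ combined with the common-sign property of the coefficients of a (restricted) root in the simple basis is exactly the standard argument, and part (2) does follow by intersecting with $R_+$. The paper itself gives no proof, citing \cite{HKT00} instead, so there is nothing to compare against; your argument is complete as written (the common-sign property does hold for non-reduced restricted root systems such as type $BC$, which is the only point one might pause over).
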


Now we shall study an orbit $\mathrm{Ad}(K)H$
of the linear isotropy representation of $(G,K)$ through $H\in\mathfrak m$.
We set
$$
Z^H_K = \{k \in K \mid \mathrm{Ad}(k)H = H\}.
$$
Then $Z^H_K$ is a closed subgroup of $K$ and the orbit $\mathrm{Ad}(K)H$ is
diffeomorphic to the coset manifold $K/Z_K^H$. 
The Lie algebra $\mathfrak{z}^H_K$ of $Z^H_K$ is given by
$$
\mathfrak{z}^H_K = \{ X \in \mathfrak{k} \mid [H ,X]=0\}.
$$
An orbit  $\mathrm{Ad}(K)H$ is a  submanifold of the hypersphere $S$
of radius $\| H \|$ in $\mathfrak m$.
From \cite{HKT00}, $\mathrm{Ad}(K)H$ is connected.
Since
$$
\mathfrak m=\bigcup_{k\in K}\mathrm{Ad}(k)\bar C,
$$
without loss of generality we may assume $H\in\bar C$.
Moreover, from Lemma \ref{lem:docomposition of Weyl},
there exists $\Delta \subset F$ such that $H \in C^\Delta$.
From Lemma~\ref{lem:basis} we have the following lemma.

\begin{lem}[\cite{IST}] \label{lem:orbits of s-representation}
For $\Delta \subset F$ and $H \in C^\Delta$,
the tangent space $T_H(\mathrm{Ad}(K)H)$
of the orbit $\mathrm{Ad}(K)H$ at $H$ and
the normal space $T_H^\perp(\mathrm{Ad}(K)H)$ in the hypersphere
can be expressed as
\begin{eqnarray}
T_H(\mathrm{Ad}(K)H)
&=& \sum_{\mu \in R_+ - R_+^\Delta} \mathfrak m_\mu, \label{eq:tangent space}\\
T_H^\perp(\mathrm{Ad}(K)H)
&=& \mathfrak a \cap H^\perp + \sum_{\nu \in R_+^\Delta} \mathfrak m_\nu
= \mathrm{Ad}((Z^H_K)_0) (\mathfrak a \cap H^\perp), \label{eq:normal space}
\end{eqnarray}
where $(Z^H_K)_0$ is the identity component of the stabilizer $Z^H_K$
of $H$ in $K$.
\end{lem}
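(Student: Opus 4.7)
The plan is to identify the tangent space as $\mathrm{ad}(\mathfrak{k})H$ and then decompose using the restricted root space structure.

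First I would compute $T_H(\mathrm{Ad}(K)H) = [\mathfrak{k}, H]$ using the decomposition $\mathfrak{k} = \mathfrak{k}_0 + \sum_{\lambda \in R_+} \mathfrak{k}_\lambda$ of Lemma~\ref{lem:basis}(1). Since $\mathfrak{k}_0$ commutes with $\mathfrak{a}$ by its definition, $[\mathfrak{k}_0, H] = 0$. For $\lambda \in R_+$, the map $\mathrm{ad}(H)$ sends $\mathfrak{k}_\lambda$ into $\mathfrak{m}_\lambda$; applying Lemma~\ref{lem:basis}(2) (and the fact that on restricted root spaces $\mathrm{ad}(H)^2 = -\langle\lambda,H\rangle^2\,\mathrm{id}$, so one direction being an isomorphism gives the other), whenever $\langle \lambda, H \rangle \neq 0$ we have $[\mathfrak{k}_\lambda, H] = \mathfrak{m}_\lambda$, and otherwise $[\mathfrak{k}_\lambda, H] = 0$. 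By Lemma~\ref{lem:determination of R_+(H) from H}(2), $\langle \lambda, H\rangle = 0$ holds exactly for $\lambda \in R_+^\Delta$. Summing gives formula~(\ref{eq:tangent space}).

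For the first equality in (\ref{eq:normal space}), I would use the orthogonal decomposition $\mathfrak{m} = \mathfrak{a} + \sum_{\mu \in R_+} \mathfrak{m}_\mu$ from Lemma~\ref{lem:basis}(1). Taking the orthogonal complement of the tangent space inside $\mathfrak{m}$ yields $\mathfrak{a} + \sum_{\nu \in R_+^\Delta}\mathfrak{m}_\nu$, and intersecting with $H^\perp$ (noting that each $\mathfrak{m}_\nu$ is already orthogonal to $\mathfrak{a} \supset \mathbf{R}H$) produces the stated expression.

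The main obstacle is the second equality in (\ref{eq:normal space}). For this I would exploit the fact that the pair $(Z_G^H, Z_K^H)$ is itself a compact symmetric pair with respect to the restriction of $\theta$: its Lie algebra decomposes as $\mathfrak{z}_\mathfrak{g}^H = \mathfrak{z}_K^H + \mathfrak{m}^H$ with $\mathfrak{m}^H = \mathfrak{a} + \sum_{\nu \in R_+^\Delta} \mathfrak{m}_\nu$, and $\mathfrak{a}$ remains a maximal abelian subspace in $\mathfrak{m}^H$ (with restricted root system $R^\Delta$, by Lemma~\ref{lem:determination of R_+(H) from H}(1)). Applying the standard fact that $\mathrm{Ad}(K')\mathfrak{a}$ covers $\mathfrak{m}'$ for any compact symmetric pair to $((Z_K^H)_0, \mathfrak{m}^H)$ yields $\mathfrak{m}^H = \mathrm{Ad}((Z_K^H)_0)\mathfrak{a}$. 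Since every $k \in Z_K^H$ satisfies $\mathrm{Ad}(k)H = H$, the $\mathrm{Ad}(k)$-action preserves $H^\perp$; a short check shows $\mathrm{Ad}(k)(\mathfrak{a} \cap H^\perp) = \mathrm{Ad}(k)\mathfrak{a} \cap H^\perp$. Taking the union over $k \in (Z_K^H)_0$ and intersecting the previous identity with $H^\perp$ then gives the claimed equality. The delicate point to verify carefully is that $(Z_G^H, Z_K^H)$ really is a symmetric pair to which the covering result applies, and that $\mathfrak{a}$ is maximal abelian in $\mathfrak{m}^H$.
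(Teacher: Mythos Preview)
The paper does not actually prove this lemma; it is quoted from \cite{IST} with only the remark ``From Lemma~\ref{lem:basis} we have the following lemma.'' Your proposal is therefore not being compared against a proof in the paper, but it is a correct and complete realization of exactly the argument the paper gestures at: using the decompositions of Lemma~\ref{lem:basis} together with Lemma~\ref{lem:determination of R_+(H) from H} to get (\ref{eq:tangent space}) and the first half of (\ref{eq:normal space}), and then invoking the standard conjugacy $\mathfrak{m}' = \mathrm{Ad}(K')\mathfrak{a}$ for the sub-symmetric pair centralizing $H$ to obtain the second equality.

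One small point worth making explicit in your write-up: the identity $\bigcup_{k}\mathrm{Ad}(k)(\mathfrak a\cap H^\perp)=\bigl(\bigcup_{k}\mathrm{Ad}(k)\mathfrak a\bigr)\cap H^\perp$ you use at the end is simply the distributivity of union over intersection with a fixed set, combined with $\mathrm{Ad}(k)H^\perp=H^\perp$; and the fact that $\mathfrak a$ is maximal abelian in $\mathfrak m^H$ is immediate from $\mathfrak a\subset\mathfrak m^H\subset\mathfrak m$ and maximality in $\mathfrak m$. With those two remarks added, the delicate point you flag is fully handled.
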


\section{Orbits of $s$-representations with degenerate Gauss mappings}
\setcounter{equation}{0}

\subsection{Tangentially degenerate orbits}

Let $(G,K)$ be a compact symmetric pair.
We assume that $(G, K)$ is irreducible,
namely $K$ acts irreducibly on $\mathfrak m$.
We consider the orbit $\mathrm{Ad}(K)H$ through $H \in \mathfrak{a}$.
In this section, we study the orbits with degenerate Gauss mappings.
Since the tangentially degeneracy of the orbit is invariant
under scalar multiples on the vector space $\mathfrak m$,
we do not discriminate the difference of the length of a vector $H$.
When $(G, K)$ is of rank $1$, $K$ acts on the sphere in $\mathfrak m$ transitively.
Therefore we only consider a symmetric pair whose rank is greater than or equal to $2$.
The following theorem is the main result of this paper.

\begin{thm} \label{thm:main}
An orbit of an $s$-representation is tangentially degenerate
if and only if it is through a long root
(any root when all roots have the same length),
or a short root of restricted root system of type $G_2$.
Let $\lambda \in R$ be such a root.
Then the tangentially degeneracy of the orbit $\mathrm{Ad}(K)\lambda$
is $\ker(d\gamma)_\lambda = \mathfrak m_\lambda$.
\end{thm}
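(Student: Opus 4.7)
The plan is to compute the relative nullity space $\mathcal N_H = \ker(d\gamma)_H$ directly via the root-space decomposition and then to read off the biconditional. Scaling out irrelevant factors, I may take $H \in C^\Delta$ for some $\Delta \subset F$, so that by Lemma~\ref{lem:determination of R_+(H) from H} the positive roots in $R^\Delta_+$ are exactly those orthogonal to $H$, and Lemma~\ref{lem:orbits of s-representation} describes the tangent and normal spaces of the orbit in the hypersphere $S \subset \mathfrak m$.

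The next step is to work out the second fundamental form $h$ of the orbit in $S$. Using the Killing vector fields $V_A(x) = [A, x]$ on $\mathfrak m$, for tangent vectors $X = [A, H]$ and $Y = [B, H]$ with $A, B \in \mathfrak k$, the Euclidean derivative of $V_B$ in the direction of $X$ equals $[B, [A, H]]$, and $h(X, Y)$ is its orthogonal projection onto $\mathfrak a \cap H^\perp + \sum_{\nu \in R^\Delta_+} \mathfrak m_\nu$ (the $\mathbf R H$ component cancels against the shape operator of the sphere itself). Restricting to $A \in \mathfrak k_\lambda$ and $B \in \mathfrak k_\mu$, this bracket lies in $\mathfrak m_{\mu+\lambda} + \mathfrak m_{\mu-\lambda}$ (with the conventions $\mathfrak m_0 := \mathfrak a$ and $\mathfrak m_{-\nu} := \mathfrak m_\nu$), so the $(\mathfrak m_\lambda, \mathfrak m_\mu)$-block of $h$ is supported there.

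Specializing to $H = \lambda$ a root, for $X \in \mathfrak m_\lambda$ and $Y \in \mathfrak m_\mu$ the block contributes to $h$ only when $\mu+\lambda$ or $|\mu-\lambda|$ belongs to $R^\Delta_+$, i.e.\ is a root orthogonal to $\lambda$, or when $\mu = \lambda$ and the resulting $\mathfrak a$-piece has a component in $\mathfrak a \cap \lambda^\perp$; the diagonal case is handled because $[\mathfrak k_\lambda, \mathfrak m_\lambda] \cap \mathfrak a$ lies in $\mathbf R \lambda = \mathbf R H$. The two off-diagonal conditions translate to the Cartan integer $2\langle \mu, \lambda\rangle / \|\lambda\|^2 = \mp 2$, which by elementary root-system theory forces $\lambda$ to be short, $\mu$ long, and $\|\mu\|^2 = 2\|\lambda\|^2$ — a situation that does not occur when $\lambda$ is long, when all restricted roots have the same length, or when $\lambda$ is short in $G_2$ (length ratio $3$). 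In each of these cases $h(X, \cdot) \equiv 0$ on $\mathfrak m_\lambda$, so $\mathfrak m_\lambda \subseteq \mathcal N_\lambda$.

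For the reverse inclusion and to rule out all other orbits, I take $X = \sum_\mu X_\mu \in \mathcal N_H$ and test against $Y \in \mathfrak m_\mu$ written as $[B_\mu, H]$: the $\mathfrak a$-component of $[B_\mu, X]$ arises solely from $[B_\mu, X_\mu]$ and equals a nonzero multiple of the root vector $\mu$ whenever $X_\mu \neq 0$; its projection onto $\mathfrak a \cap H^\perp$ is nonzero unless $\mu$ is proportional to $H$, forcing $X_\mu = 0$ in every other case. Hence $\mathcal N_H = 0$ if $H$ is not a scalar multiple of any root, and in reduced root systems $\mathcal N_H \subseteq \mathfrak m_\lambda$ when $H = \lambda$ is a root, yielding the equality $\mathcal N_\lambda = \mathfrak m_\lambda$ in the cases listed above. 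Finally, for $\lambda$ short in $B_n$, $C_n$, or $F_4$, I exhibit explicitly a long root $\mu$ with $\mu+\lambda \in R^\Delta_+$ and check that $[\mathfrak k_\mu, \mathfrak m_\lambda]$ maps nontrivially into $\mathfrak m_{\mu+\lambda}$, concluding $\mathcal N_\lambda = 0$ there. The main obstacle lies in pinning down these structure constants: root-system data only guarantee that the target spaces are nonzero, and to show that the brackets are genuinely nonzero one must work with real bases of $\mathfrak k_\nu$ and $\mathfrak m_\nu$ built from conjugated pairs of complex root vectors in the $\tilde{\mathfrak g}_\alpha$'s and track the normalizations carefully; the non-reduced $BC$ case, where $2\lambda$ is a root parallel to $\lambda$, also requires separate checking in the reverse inclusion.
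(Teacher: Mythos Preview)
Your overall architecture matches the paper's: both reduce $\mathcal N_H$ to $\sum_{\mu\parallel H}\mathfrak m_\mu$ via the $\mathfrak a$-component of the second fundamental form (the paper does this through the shape operators $A_\xi$ for $\xi\in\mathfrak a\cap H^\perp$, Proposition~4.2), then analyze the root-string conditions to get $\mathfrak m_\lambda\subset\mathcal N_\lambda$ for long roots and $G_2$-short roots (the paper's Proposition~4.3 with Lemma~4.5 and Proposition~4.6), and finally treat the remaining short-root orbits case by case.  Your Cartan-integer argument for the inclusion $\mathfrak m_\lambda\subset\mathcal N_\lambda$ is correct and a bit cleaner than the paper's formulation.

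There is, however, a genuine gap in the converse.  Knowing that $[\mathfrak k_\mu,\mathfrak m_\lambda]$ maps \emph{nontrivially} into $\mathfrak m_{\mu+\lambda}$ only gives $\mathcal N_\lambda\subsetneq\mathfrak m_\lambda$; to conclude $\mathcal N_\lambda=0$ you must show that \emph{every} nonzero $X\in\mathfrak m_\lambda$ has nonvanishing normal bracket, and in multiplicities $>1$ this is not automatic from root data.  The paper's device here is different from yours: it proves (Proposition~4.7 and Lemma~4.8) that $\mathcal N_\lambda$ is the maximal $\mathrm{ad}(\mathfrak z_K^\lambda)$-invariant subspace of $\mathfrak m_\lambda$, which forces $[\mathfrak k_\nu,\mathcal N_\lambda]=0$ for every $\nu\in R_+^\Delta$.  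The task then becomes showing that $[\mathfrak k_\nu,X]=0$ implies $X=0$ for some well-chosen $\nu\perp\lambda$, and this is what the extensive case analysis accomplishes.

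That case analysis is also considerably heavier than your sketch suggests.  A given restricted root system (e.g.\ $F_4$ or $BC_p$) is realized by several inequivalent symmetric pairs with different multiplicities, and each needs its own argument: the paper uses the complex structure for Hermitian pairs, explicit matrix computations for the $SO$ and $Sp$ families, and---for the quaternionic pairs $(E_6,SU(2)\!\cdot\!SU(6))$, $(E_7,SU(2)\!\cdot\!SO(12))$, $(E_8,SU(2)\!\cdot\!E_7)$ of type $F_4$---a Cayley transform to the root system $\tilde R$ of $\mathfrak g$ together with a delicate enumeration of the fibres $(\pi\Phi)^{-1}(\lambda)$ and $(\pi\Phi)^{-1}(\nu)$.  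Your closing remark about tracking structure constants in real bases is on the right track, but you should be aware that simply exhibiting one $\mu$ and one nonzero bracket will not close the argument in these higher-multiplicity cases.
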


To prove this theorem,
we show the following proposition first.

\begin{pro} \label{pro:criterion of tangentially degenerate}
If the orbit $\mathrm{Ad}(K)H$ through $H \in \mathfrak a$
is tangentially degenerate,
then $H$ is a constant multiple of a restricted root.
\end{pro}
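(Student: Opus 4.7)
The plan is to analyze the shape operator $A_\xi$ for normal directions $\xi \in \mathfrak a \cap H^\perp$ and use the restricted root space decomposition to force any tangentially degenerate direction to line up with a single restricted root parallel to $H$.

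After replacing $H$ by a $K$-conjugate, we may assume $H \in C^\Delta$ for some $\Delta \subset F$, and Lemma~\ref{lem:orbits of s-representation} supplies
\[
T_H(\mathrm{Ad}(K)H) = \sum_{\mu \in R_+ - R_+^\Delta} \mathfrak m_\mu,
\qquad
\mathfrak a \cap H^\perp \subset T_H^\perp(\mathrm{Ad}(K)H).
\]
The first step is to compute $A_\xi$ for $\xi \in \mathfrak a \cap H^\perp$. Given $X \in \mathfrak m_\mu$ with $\mu \in R_+ - R_+^\Delta$, Lemma~\ref{lem:basis}(2) produces $X_1 \in \mathfrak k_\mu$ with $X = [X_1, H]$, so the curve $c(t) = \mathrm{Ad}(\exp(t X_1))H$ satisfies $c'(0) = X$ and $c''(0) = [X_1, X]$. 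Since $\xi \perp H$, the radial part of $c''(0)$ drops out, and $\mathrm{ad}$-invariance gives
\[
\langle A_\xi X, X \rangle = \langle [X_1, X], \xi \rangle = -\langle X, [X_1, \xi] \rangle.
\]
Decomposing $X_1$ according to~\eqref{eq:root space} into its $\mathfrak g_\mu$ and $\mathfrak g_{-\mu}$ components shows that $[\xi, X_1]$ and $[H, X_1]$ are proportional with ratio $\langle \mu, \xi \rangle / \langle \mu, H \rangle$, and this yields
\[
\langle A_\xi X, X \rangle = -\frac{\langle \mu, \xi \rangle}{\langle \mu, H \rangle}\|X\|^2.
\]
A parallel bracket count shows that $h(\mathfrak m_\mu, \mathfrak m_\nu)$ has no $\mathfrak a$-component when $\mu \neq \nu$, so the spaces $\mathfrak m_\mu$ are common eigenspaces of the family $\{A_\xi \mid \xi \in \mathfrak a \cap H^\perp\}$, with eigenvalue $-\langle \mu, \xi \rangle / \langle \mu, H \rangle$ on $\mathfrak m_\mu$.

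With this eigenspace description in hand, suppose the orbit is tangentially degenerate and pick $0 \neq X = \sum_\mu X_\mu \in \mathcal N_H$ with $X_\mu \in \mathfrak m_\mu$. Then
\[
A_\xi X = -\sum_\mu \frac{\langle \mu, \xi \rangle}{\langle \mu, H \rangle} X_\mu = 0
\]
for every $\xi \in \mathfrak a \cap H^\perp$, and independence of the root spaces forces $\langle \mu, \xi \rangle = 0$ for every such $\xi$ whenever $X_\mu \neq 0$. Equivalently, $\mu \in (\mathfrak a \cap H^\perp)^\perp \cap \mathfrak a = \mathbf R H$, which exhibits $H$ as a scalar multiple of the restricted root $\mu$.

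The main technical obstacle is the eigenvalue calculation for $A_\xi$; once it is established the conclusion is a short linear-algebra argument that does not even use the normal directions coming from $\sum_{\nu \in R_+^\Delta} \mathfrak m_\nu$. The one subtlety worth watching is the passage between the extrinsic geometry in $\mathfrak m$ and in the sphere, but this is harmless because $\xi \perp H$ kills the radial component of $c''(0)$.
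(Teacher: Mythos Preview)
Your argument is correct and follows essentially the same route as the paper's: both reduce to the fact that for $\xi \in \mathfrak a \cap H^\perp$ the operator $A_\xi$ acts on each $\mathfrak m_\mu$ as the scalar $-\langle\mu,\xi\rangle/\langle\mu,H\rangle$, so that $\bigcap_{\xi \in \mathfrak a \cap H^\perp}\ker A_\xi = \sum_{\mu \parallel H}\mathfrak m_\mu$. The only differences are that the paper cites this eigenvalue description from \cite{IST} rather than rederiving it, and that the paper additionally records how the full normal space is swept out by $(Z^H_K)_0$-translates of $\mathfrak a \cap H^\perp$ (yielding equation~\eqref{eq:4-1}, used later); for the proposition itself the two proofs coincide.
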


\begin{proof}
First we note that
$$
A_\xi = \mathrm{Ad}(k)^{-1} A_{\mathrm{Ad}(k)\xi} \mathrm{Ad}(k)
$$
for any $\xi \in \mathfrak a \cap H^\perp$ and $k \in (Z^H_K)_0$.
From this we have
\begin{eqnarray*}
\bigcap_{\xi \in T_H^\perp(\mathrm{Ad}(K)H)} \ker A_\xi
&=& \bigcap_{\xi \in \mathrm{Ad}((Z_K^H)_0)(\mathfrak a \cap H^\perp)}
\ker A_\xi \\
&=& \bigcap_{\xi \in \mathfrak a \cap H^\perp \atop k \in (Z_K^H)_0}
\ker A_{\mathrm{Ad}(k)\xi} \\
&=& \bigcap_{\xi \in \mathfrak a \cap H^\perp \atop k \in (Z_K^H)_0}
\ker (\mathrm{Ad}(k) A_{\xi} \mathrm{Ad}(k)^{-1}) \\
&=& \bigcap_{\xi \in \mathfrak a \cap H^\perp \atop k \in (Z_K^H)_0}
\ker (A_{\xi} \mathrm{Ad}(k)^{-1}) \\
&=& \bigcap_{\xi \in \mathfrak a \cap H^\perp \atop k \in (Z_K^H)_0}
\mathrm{Ad}(k) \ker A_\xi\\
&=&\bigcap_{k \in (Z_K^H)_0}\mathrm{Ad}(k)
\bigcap_{\xi \in \mathfrak a\cap H^\perp}\ker A_\xi.
\end{eqnarray*}

For $\xi \in \mathfrak a \cap H^\perp$ the set of eigenvalues of $A_\xi$
is given by
$$
\left\{ -\frac{\langle \lambda, \xi \rangle}{\langle \lambda, H \rangle} \
\Bigg| \ \lambda \in R_+-R_+^\Delta \right\},
$$
and the eigenspace associated with eigenvalue
$-\langle \lambda, \xi \rangle / \langle \lambda, H \rangle$
is given by
$$
\sum_{-\frac{\langle \mu, \xi \rangle}{\langle \mu, H \rangle}
= -\frac{\langle \lambda, \xi \rangle}{\langle \lambda, H \rangle} }
\mathfrak m_\mu.
$$
See \cite{IST} for details.
The space $\ker A_\xi$ is nothing but the eigenspace associated with $0$-eigenvalue.
Thus
$$
\ker A_\xi
= \sum_{\langle \mu, \xi \rangle = 0} \mathfrak m_\mu.
$$
Therefore we have
$$
\bigcap_{\xi \in \mathfrak{a}\cap H^\perp}\ker A_\xi
= \bigcap_{\xi\in \mathfrak{a}\cap H^\perp}
\sum_{\langle \mu, \xi \rangle = 0} \mathfrak m_\mu 
=\sum_{\mu \mathrel{/\!/} H}\mathfrak m_\mu,
$$
hence
\begin{equation}
\bigcap_{\xi \in T_H^\perp(\mathrm{Ad}(K)H)} \ker A_\xi
= \bigcap_{k \in (Z_K^H)_0}
\mathrm{Ad}(k) \sum_{\mu \mathrel{/\!/} H} \mathfrak m_\mu 
\subset \sum_{\mu \mathrel{/\!/} H} \mathfrak m_\mu. \label{eq:4-1}
\end{equation}
Consequently, if $\mathrm{Ad}(K)H$ is tangentially degenerate,
then $H$ must be a constant multiple of a restricted root.
\end{proof}

From the above proposition, hereafter,
we may consider the orbit through a restricted root,
i.e., we may put $H = \lambda \in R_+$.
We set
$$
\Delta =\{\mu \in F \mid \langle \mu, \lambda \rangle > 0\}.
$$
Then we have $\lambda \in C^\Delta$.
If $2\lambda \notin R_+$, then
$\mathfrak k_0 + \mathfrak k_\lambda$ is a Lie subalgebra of $\mathfrak k$.
We denote  by $K(\lambda)$ the analytic subgroup of $K$
which corresponds to $\mathfrak k_0 + \mathfrak k_\lambda$.

\begin{pro}\label{pro:root and tangentially degeneracy}
If $\lambda \in R_+$ satisfies
\begin{enumerate} 
\item[(a)] $2\lambda \notin R_+$,
\item[(b)] $\lambda + \nu \notin R$ and $\lambda - \nu \notin R$
for all $\nu \in R_+^\Delta$,
\end{enumerate}
then $\mathrm{Ad}(K)\lambda$ is tangentially degenerate.
\end{pro}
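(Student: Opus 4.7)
The plan is to apply formula~\eqref{eq:4-1} with $H = \lambda$ and to exhibit the nonzero subspace $\mathfrak m_\lambda$ inside the relative nullity space. Since $\lambda$ itself is a positive restricted root parallel to $\lambda$, one has $\mathfrak m_\lambda \subset \sum_{\mu \mathrel{/\!/} \lambda}\mathfrak m_\mu$, so it suffices to prove that $\mathrm{Ad}(k)\mathfrak m_\lambda = \mathfrak m_\lambda$ for every $k\in (Z_K^\lambda)_0$. By connectedness of $(Z_K^\lambda)_0$, this is equivalent to the infinitesimal inclusion $[\mathfrak z_K^\lambda, \mathfrak m_\lambda] \subset \mathfrak m_\lambda$.

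To unpack this I would decompose the stabilizer Lie algebra using Lemma~\ref{lem:basis} together with Lemma~\ref{lem:determination of R_+(H) from H}(2) (applied to $\lambda \in C^\Delta$):
$$
\mathfrak z_K^\lambda = \mathfrak k_0 + \sum_{\nu \in R_+^\Delta}\mathfrak k_\nu,
$$
and treat the two kinds of summands separately. For $X \in \mathfrak k_0$, $\mathrm{ad}(X)$ commutes with $\mathrm{ad}(\mathfrak a)$ and is $\theta$-stable, so it preserves each $\mathfrak m_\mu$; in particular $[\mathfrak k_0, \mathfrak m_\lambda]\subset \mathfrak m_\lambda$.

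The main work is the estimate on $[\mathfrak k_\nu, \mathfrak m_\lambda]$ for $\nu \in R_+^\Delta$, and this is precisely what hypothesis (b) is designed to kill. The root space inclusions $[\mathfrak g_{\pm\nu},\mathfrak g_{\pm\lambda}]\subset \mathfrak g_{\pm\nu\pm\lambda}$ yield
$$
[\mathfrak k_\nu,\mathfrak m_\lambda] \subset \bigl(\mathfrak g_{\lambda+\nu} + \mathfrak g_{\lambda-\nu} + \mathfrak g_{-\lambda+\nu} + \mathfrak g_{-\lambda-\nu}\bigr)\cap \mathfrak g.
$$
Since $\langle \nu, \lambda\rangle = 0$ and $\lambda \ne 0$, the four vectors $\pm\lambda\pm\nu$ are all nonzero, and by (b) none of them lies in $R$; so each summand on the right vanishes, and hence $[\mathfrak k_\nu,\mathfrak m_\lambda] = 0$ for every $\nu\in R_+^\Delta$. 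Combined with the previous paragraph this gives $[\mathfrak z_K^\lambda,\mathfrak m_\lambda] \subset \mathfrak m_\lambda$, completing the argument.

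The only delicate point is the passage between the real subspaces $\mathfrak k_\nu,\mathfrak m_\lambda \subset \mathfrak g$ and the complex root spaces $\mathfrak g_{\pm\nu},\mathfrak g_{\pm\lambda} \subset \mathfrak g^{\mathbf C}$ appearing in \eqref{eq:root space}, but once one recalls the defining relations $\mathfrak k_\nu,\mathfrak m_\nu \subset (\mathfrak g_\nu + \mathfrak g_{-\nu}) \cap \mathfrak g$ the bracket computation reduces to the usual restricted root arithmetic. Hypothesis (a) does not enter the inclusion above; its role is the auxiliary one of ensuring that $\mathfrak k_0 + \mathfrak k_\lambda$ is a Lie subalgebra, consistent with the setup of the subgroup $K(\lambda)$ introduced just before the proposition.
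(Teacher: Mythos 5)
Your argument is correct, but it takes a genuinely different route from the paper's own proof of Proposition~\ref{pro:root and tangentially degeneracy}. The paper does not touch $\ker(d\gamma)_\lambda$ here at all: it uses hypothesis (a) to form the subgroup $K(\lambda)$ with Lie algebra $\mathfrak k_0+\mathfrak k_\lambda$, shows via (b) that $\mathrm{Ad}(K(\lambda))$ preserves $\gamma(\lambda)^\perp=\mathfrak a\cap\lambda^\perp+\sum_{\nu\in R_+^\Delta}\mathfrak m_\nu$, and concludes by $K$-equivariance that $\gamma$ is constant on the positive-dimensional suborbit $\mathrm{Ad}(K(\lambda))\lambda$. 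You instead prove the infinitesimal invariance $[\mathfrak z^\lambda_K,\mathfrak m_\lambda]\subset\mathfrak m_\lambda$ and feed it into (\ref{eq:4-1}); this is precisely the mechanism the paper formalizes only later, in Proposition~\ref{pro:tangentially degenerate} and Lemma~\ref{lem:4-8}, and your bracket computation $[\mathfrak k_\nu,\mathfrak m_\lambda]\subset\mathfrak m_{\lambda+\nu}+\mathfrak m_{\lambda-\nu}=\{0\}$ is essentially the one in Lemma~\ref{lem:4-8}. Your route buys more: it yields the quantitative statement $\mathfrak m_\lambda\subset\ker(d\gamma)_\lambda$ needed for Theorem~\ref{thm:main}, and it makes visible that hypothesis (a) plays no role in the degeneracy conclusion (it is only needed so that $\mathfrak k_0+\mathfrak k_\lambda$ is a subalgebra in the paper's construction). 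The paper's route is softer — it needs no formula for the relative nullity space, only equivariance of the Gauss map — but gives only degeneracy, not the location of the kernel. The one point worth stating explicitly in your write-up is that $\mathfrak m_\lambda\neq\{0\}$ for $\lambda\in R_+$ (immediate from $\mathfrak g_\lambda\neq\{0\}$ and Lemma~\ref{lem:basis}(2)), since the whole argument rests on exhibiting a \emph{nonzero} invariant subspace.
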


\begin{proof}
Since the tangent space of the orbit $\mathrm{Ad}(K)\lambda$ at $\lambda$
is given as in (\ref{eq:tangent space}),
the image of $\lambda$ by the Gauss mapping $\gamma$ is
$$
\gamma(\lambda)
= \mathbf{R} \lambda + \sum_{\mu \in R_+ - R_+^\Delta} \mathfrak m_\mu,
$$
and its orthogonal complement in $\mathfrak m$ is
$$
\gamma(\lambda)^\perp
= \mathfrak a \cap \lambda^\perp
+ \sum_{\nu \in R_+^\Delta} \mathfrak m_\nu.
$$
From a rule of the bracket product of root spaces and the assumption (b),
we have
$$
\left[
\mathfrak k_0,
\mathfrak a \cap \lambda^\perp
+ \sum_{\nu \in R_+^\Delta} \mathfrak m_\nu
\right]
\subset \sum_{\nu \in R_+^\Delta} \mathfrak m_\nu,
\qquad
\left[
\mathfrak k_\lambda,
\mathfrak a \cap \lambda^\perp
+ \sum_{\nu \in R_+^\Delta} \mathfrak m_\nu
\right]
= \{0\}.
$$
Therefore
$$
\left[
\mathfrak k_0 + \mathfrak k_\lambda,
\mathfrak a \cap \lambda^\perp
+ \sum_{\nu \in R_+^\Delta} \mathfrak m_\nu
\right]
\subset
\mathfrak a \cap \lambda^\perp
+ \sum_{\nu \in R_+^\Delta} \mathfrak m_\nu.
$$
This yields
$$
\mathrm{Ad}(K(\lambda))
\left(
\mathfrak a \cap \lambda^\perp
+ \sum_{\nu \in R_+^\Delta} \mathfrak m_\nu
\right)
=
\mathfrak a \cap \lambda^\perp
+ \sum_{\nu \in R_+^\Delta} \mathfrak m_\nu.
$$
Hence
$$
\mathrm{Ad}(K(\lambda))\cdot\gamma(\lambda) = \gamma(\lambda).
$$
Since $\gamma$ is $K$-equivariant, we have
$$
\gamma(\mathrm{Ad}(k)\lambda)
= \mathrm{Ad}(k)\gamma(\lambda)
= \gamma(\lambda)
$$
for any $k \in K(\lambda)$.
This means that $\gamma$ is constant on $\mathrm{Ad}(K(\lambda))\lambda$.
It is clear that $\mathrm{Ad}(K(\lambda))\lambda$ is not a point,
since $T_\lambda(\mathrm{Ad}(K(\lambda))\lambda) = \mathfrak m_\lambda$.
Consequently $\mathrm{Ad}(K)\lambda$ is tangentially degenerate.
\end{proof}

We denote by $\delta \in R_+$ the highest root of $R$.

\begin{lem}[\cite{Wolf}] \label{lem:Wolf}
For $\lambda \in R_+$,
$$
\frac{\langle \lambda, \delta \rangle}{\| \delta \|^2}=
\left\{
\begin{array}{ll}
0   & (\mbox{when } \lambda \perp \delta),\\
1   & (\mbox{when } \lambda = \delta),\\
1/2 & (\mbox{otherwise}).
\end{array}
\right.
$$
When $\langle \lambda, \delta \rangle / \| \delta \|^2 = 0$,
then $\lambda -\delta$ is not a root.
When $\langle \lambda, \delta \rangle / \| \delta \|^2 = 1/2$,
then $\lambda -\delta$ is a root.
\end{lem}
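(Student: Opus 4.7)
The plan is to analyze the $\delta$-string of roots through $\lambda$, which is the fundamental tool for a statement of this kind. Since $\delta$ is the highest root and $\lambda \in R_+$, the vector $\lambda + \delta$ is positive and strictly larger than $\delta$ in the ordering, so it cannot be a root. Hence, for $\lambda \neq \delta$, the $\delta$-string through $\lambda$ has the form $\lambda, \lambda - \delta, \ldots, \lambda - p\delta$ for some integer $p \geq 0$, and the classical root-string formula gives
$$
p = \frac{2\langle \lambda, \delta \rangle}{\|\delta\|^2}.
$$
In particular $\langle \lambda, \delta \rangle \geq 0$, so the ratio in question is a non-negative integer multiple of $1/2$.

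Next I would bound $p$ using the fact that in the irreducible setting the highest root $\delta$ is long, so $\|\lambda\|^2 \leq \|\delta\|^2$ for every $\lambda \in R$. This yields
$$
p = \frac{2\langle \lambda, \delta \rangle}{\|\delta\|^2} \leq \frac{2\langle \lambda, \delta \rangle}{\|\lambda\|^2},
$$
and multiplying the two Cartan integers and applying Cauchy--Schwarz gives
$$
p \cdot \frac{2\langle \lambda, \delta \rangle}{\|\lambda\|^2} = \frac{4\langle \lambda, \delta \rangle^2}{\|\lambda\|^2\|\delta\|^2} \leq 4,
$$
so $p^2 \leq 4$. The case $p = 2$ forces equality in Cauchy--Schwarz, hence $\lambda$ and $\delta$ are proportional, so $\lambda = \delta$ (both being in $R_+$), a contradiction. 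Thus $p \in \{0,1\}$ whenever $\lambda \neq \delta$, and together with the trivial case $\lambda = \delta$ (which gives ratio $1$) this proves the trichotomy $\langle\lambda,\delta\rangle/\|\delta\|^2 \in \{0, 1/2, 1\}$ with the stated geometric meaning.

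For the two string assertions in the second half of the lemma no further work is needed: if the ratio equals $0$ then $p = 0$ and the $\delta$-string through $\lambda$ is the singleton $\{\lambda\}$, so $\lambda - \delta$ is not a root; if the ratio equals $1/2$ then $p = 1$ and $\lambda - \delta$ lies in the string, hence is a root.

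The main obstacle is the product bound: one must correctly exploit that $\delta$ is long in order to both control $\|\lambda\|/\|\delta\|$ and rule out the $p = 2$ case via the equality condition in Cauchy--Schwarz (which is exactly where irreducibility of $(G,K)$ enters). Once this is in place, every remaining claim follows directly from the explicit description of the $\delta$-string.
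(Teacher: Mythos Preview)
Your proof is correct and follows essentially the same approach as the paper: both arguments use the $\delta$-string through $\lambda$, the observation that $\lambda+\delta\notin R$ because $\delta$ is highest, and the fact that $\delta$ is a long root together with Cauchy--Schwarz to bound the string length and pin down the three cases. The only difference is in the execution of the bound: the paper rules out string length $\geq 3$ by an explicit norm contradiction on $3\delta=\lambda-(\lambda-3\delta)$, whereas you bound $p^2\le 4$ directly via the product of the two Cartan integers; both then dispose of $p=2$ by the equality case of Cauchy--Schwarz.
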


\begin{proof}
Since $\delta$ is the highest root, clearly $\lambda + \delta$ is not a root.
We express $\delta$-series containing $\lambda$ as
$\lambda + n\delta \; (p \leq n \leq 0)$.
Then
$$
-2\frac{\langle \lambda, \delta \rangle}{\| \delta \|^2} = p.
$$
Now we shall show $p=0,-1$ or $-2$.
If we assume that $p \leq -3$, then $\mu = \lambda - 3\delta$ is a root.
Then, from the square norm of $3\delta = \lambda - \mu$, we have
$$
9 \| \delta \|^2 = \| \lambda \|^2 + \| \mu \|^2 - 2\langle \lambda, \mu \rangle .
$$
From $\| \lambda \| \leq \| \delta \|, \| \mu \| \leq \| \delta \|$
and Cauchy's inequality
$$
-\langle \lambda, \mu \rangle \leq \| \lambda \| \| \mu \| \leq \| \delta \|^2,
$$
we have $9\| \delta \|^2 \leq 4\| \delta \|^2$.
This is a contradiction.

In the case of $p=0$, $\lambda$ is perpendicular to $\delta$
and $\lambda - \delta$ is not a root.
In the case of $p=-1$,
$\langle \lambda, \delta \rangle / \| \delta \|^2 = 1/2$
and $\lambda - \delta$ is a root.
When $p=-2$, then $\lambda = \delta$ from Cauchy's inequality.
\end{proof}

From Proposition~\ref{pro:root and tangentially degeneracy}
and Lemma~\ref{lem:Wolf} we have the following corollary.

\begin{cor}
The orbit $\mathrm{Ad}(K)\delta$ through the highest root $\delta$
of $R$ is tangentially degenerate.
\end{cor}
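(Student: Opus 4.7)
The plan is to apply Proposition~\ref{pro:root and tangentially degeneracy} directly with $\lambda = \delta$. With the associated set $\Delta = \{\mu \in F \mid \langle \mu, \delta\rangle > 0\}$, so that $\delta \in C^\Delta$, the task reduces to verifying the two hypotheses (a) $2\delta \notin R_+$ and (b) $\delta+\nu \notin R$ and $\delta-\nu \notin R$ for every $\nu \in R_+^\Delta$.

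Condition (a) is immediate from the defining property of the highest root: were $2\delta$ a positive root, it would exceed $\delta$ in the lexicographic ordering, contradicting maximality of $\delta$.

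For condition (b), the key preliminary step is to describe $R_+^\Delta$ concretely. Since $\delta \in C^\Delta$, Lemma~\ref{lem:determination of R_+(H) from H}(2) identifies
$$
R_+^\Delta = \{\nu \in R_+ \mid \langle \nu, \delta\rangle = 0\},
$$
i.e.\ the positive roots orthogonal to $\delta$. Once this is in hand, both pieces of (b) follow at once: the sum $\delta + \nu$ cannot be a root by the highest-root property of $\delta$, while the difference $\delta - \nu$ fails to be a root by the first clause of Lemma~\ref{lem:Wolf} applied to $\nu$ (the condition $\langle \nu, \delta\rangle/\|\delta\|^2 = 0$ yields that $\nu - \delta$, and hence equivalently $\delta - \nu$, is not a root).

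With both hypotheses satisfied, Proposition~\ref{pro:root and tangentially degeneracy} delivers the conclusion that $\mathrm{Ad}(K)\delta$ is tangentially degenerate. Since each step is a one-line consequence of a result already proved above, I do not anticipate any genuine obstacle; the only point requiring a moment of care is recognizing, via Lemma~\ref{lem:determination of R_+(H) from H}, that $R_+^\Delta$ is exactly the set of positive roots perpendicular to $\delta$, which is precisely the hypothesis under which Lemma~\ref{lem:Wolf} rules out $\delta - \nu$ being a root.
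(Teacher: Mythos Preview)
Your proposal is correct and follows exactly the approach indicated in the paper, which simply asserts that the corollary follows from Proposition~\ref{pro:root and tangentially degeneracy} and Lemma~\ref{lem:Wolf} without spelling out the details. Your write-up supplies precisely those details: the identification of $R_+^\Delta$ with the roots orthogonal to $\delta$ via Lemma~\ref{lem:determination of R_+(H) from H}, the maximality of $\delta$ for condition~(a) and for ruling out $\delta+\nu$, and the orthogonality case of Lemma~\ref{lem:Wolf} for ruling out $\delta-\nu$.
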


Since a long root is conjugate to the highest root
under the action of the Weyl group,
it satisfies the conditions of Proposition \ref{pro:root and tangentially degeneracy}.
Especially in the case where the lengths of all roots are equal,
all roots satisfy the conditions
of Proposition \ref{pro:root and tangentially degeneracy}.
We determine short roots satisfying the conditions
of Proposition \ref{pro:root and tangentially degeneracy}
in the following proposition.

\begin{pro} \label{pro:4-6}
Short roots satisfying the conditions {\rm (a)} and {\rm (b)} of Proposition 4.3
are only short roots of the restricted root system of type $G_2$.
\end{pro}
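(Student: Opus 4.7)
The plan is to recast condition (b) in a Weyl-invariant form and then run through the short list of irreducible restricted root systems of rank $\ge 2$ that admit short roots. First, by Lemma~\ref{lem:determination of R_+(H) from H}, $R_+^\Delta$ coincides with $\{\nu \in R_+ : \langle\nu,\lambda\rangle = 0\}$, the positive roots orthogonal to $\lambda$. Since $\langle\lambda,\nu\rangle = 0$ forces the $\nu$-string through $\lambda$ to be symmetric about $\lambda$, the conditions $\lambda + \nu \in R$ and $\lambda - \nu \in R$ are equivalent; hence (b) can be rephrased as ``$\lambda + \nu \notin R$ for every root $\nu \in R$ perpendicular to $\lambda$''. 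Both (a) and this reformulation of (b) are manifestly stable under the Weyl group, so it suffices to verify them on one representative of each Weyl orbit of short roots in each relevant root system.

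The list to examine is $B_n$, $C_n$ ($n \ge 2$), $BC_n$ ($n \ge 2$), $F_4$, and $G_2$. In $B_n$ or $F_4$, I would take the short root $\lambda = e_1$; the perpendicular root $\nu = e_2$ gives $\lambda + \nu = e_1 + e_2 \in R$, so (b) fails. In $C_n$, the short root $\lambda = e_1 - e_2$ is perpendicular to the short root $\nu = e_1 + e_2 \in R$, and $\lambda + \nu = 2e_1 \in R$, violating (b) again. In $BC_n$, the short roots are $\pm e_i$ and $2e_1 \in R$, violating (a) outright. Finally, for $G_2$ with simple system $\{\alpha, \beta\}$ ($\alpha$ short, $\beta$ long), I would work with the dominant short root $\lambda = 2\alpha + \beta$: a direct enumeration of the six positive roots shows that $\pm\beta$ are the only roots orthogonal to $\lambda$, and neither $\lambda + \beta = 2\alpha + 2\beta$ nor $\lambda - \beta = 2\alpha$ is a root, while $2\lambda = 4\alpha + 2\beta$ is likewise not a root; thus both (a) and (b) hold.

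The main conceptual obstacle is recognizing why $G_2$ is the unique exception. The reason emerges from the verification itself: in $B_n$, $C_n$, and $F_4$ the perpendicular space of a short root always contains a root whose sum with $\lambda$ lies back in $R$, typically as a long root; and in $BC_n$ the presence of $2e_i \in R$ already defeats (a). Only in $G_2$ does the narrow angular structure force the roots perpendicular to a short root to be long of length $\sqrt{3}$, so that $\lambda \pm \nu$ has length $2$, which sits outside the root-length set $\{1, \sqrt{3}\}$.
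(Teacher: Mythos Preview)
Your approach is essentially the paper's: a case-by-case sweep through the irreducible root systems with more than one root length. Your preliminary reductions---recasting~(b) via the symmetry of the $\nu$-string through $\lambda$ when $\langle\lambda,\nu\rangle=0$, and invoking Weyl invariance to test a single representative per orbit---are clean and make the check slightly tidier than the paper's, which treats $F_4$ by embedding a $B_2$ subsystem rather than by your direct $e_1,e_2$ argument.

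There is, however, a genuine omission in your $BC_n$ case. In this paper ``short root'' means any root not in the Weyl orbit of the highest root, i.e.\ any root not of maximal length. In $BC_n$ the roots $\pm 2e_i$ are long, so the short roots comprise \emph{two} Weyl orbits: the shortest roots $\pm e_i$ \emph{and} the intermediate roots $\pm e_i\pm e_j$. You dispose of $\pm e_i$ via~(a), but you never address $\pm e_i\pm e_j$, for which $2\lambda\notin R$ and~(a) gives nothing. The fix is immediate---your $C_n$ argument applies verbatim, since $e_1-e_2\perp e_1+e_2$ and $(e_1+e_2)+(e_1-e_2)=2e_1\in R$---but as written the case analysis is incomplete.
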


\begin{proof}
We will follow the notations of root systems in \cite{Bourbaki}.

In the case of type $B$, the restricted root system is given by
$$
R = \{\pm e_i\; |\; 1 \le i \le p \}
\cup\{\pm e_i \pm e_j\; |\; 1 \le i < j \le p \}.
$$
If we add $\pm e_j$ to a short root $\pm e_i\; (i \neq j)$,
then it becomes a root again.
Thus any short root does not satisfy the condition (b).

In the case of type $C$, the restricted root system is given by
$$
R = \{\pm 2e_i\; |\; 1 \le i \le p \}
\cup\{\pm e_i \pm e_j\; |\; 1 \le i < j \le p \}.
$$
Short roots are $\pm e_i \pm e_j$.
By the action of the Weyl group,
it suffices to consider a short root $e_1 + e_2$.
The set of roots which are perpendicular to $e_1 + e_2$ is
$$
\{\pm(e_1 - e_2)\} \cup \{\pm 2e_i \mid 3 \le i \le p\}
\cup \{\pm e_i \pm e_j \mid 3 \le i < j \le p\}.
$$
Since
$$
(e_1 + e_2) + (e_1 - e_2) = 2e_1 \in R,\quad 
(e_1 + e_2) - (e_1 - e_2) = 2e_2  \in R,
$$
$e_1 + e_2$ does not satisfy the condition (b).

In the case of type $G_2$,
we can easily see that 
all short roots satisfy the conditions (a) and (b).

In the case of type $BC$, the restricted root system is given by
$$
R = \{\pm 2e_i, \pm e_i\; |\; 1 \le i \le p\}
\cup\{\pm e_i \pm e_j\; |\; 1 \le i < j \le p\}.
$$
We can see that short roots $\pm e_i,\, \pm e_i \pm e_j$
do not satisfy the condition (b)
by a similar way in the case of types $B$ and $C$.

The root system of $F_4$ contains a root system of type $B_2$
as a sub-system.
Then a short root of type $F_4$ can be regarded as a short root of type $B_2$.
Thus in this case a short root does not satisfy the condition (b).
\end{proof}

By the above discussion we obtained that the orbits stated in 
Theorem~\ref{thm:main} are tangentially degenerate.
In order to determine the spaces of relative nullity of these orbits
and to show other orbits are not tangentially degenerate,
we give the following criterion
for an orbit of an $s$-representation to be tangentially degenerate.

\begin{pro} \label{pro:tangentially degenerate}
The orbit $\mathrm{Ad}(K)\lambda$ through a restricted root $\lambda \in R$
is tangentially degenerate if and only if
there exists a non-zero subspace of
$\sum_{\mu \mathrel{/\!/} \lambda} \mathfrak{m}_\mu$
which is invariant under $\mathrm{ad}(\mathfrak{z}^\lambda_K )$.
More precisely,
\begin{equation} \label{eq:tangentially degenerate}
\ker (d\gamma)_\lambda
=\bigcap_{k\in (Z^\lambda_K)_0}\mathrm{Ad}(k)
\sum_{\mu \mathrel{/\!/} \lambda} \mathfrak{m}_\mu
\end{equation}
and $\ker (d\gamma)_\lambda$ is the maximal subspace of
$\sum_{\mu \mathrel{/\!/} \lambda} \mathfrak{m}_\mu$
which is invariant under $\mathrm{ad}(\mathfrak{z}^\lambda_K )$.
\end{pro}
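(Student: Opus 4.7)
The plan is to leverage the computation already performed in the proof of Proposition~\ref{pro:criterion of tangentially degenerate}, and supplement it with a purely representation-theoretic description of how the intersection sits inside $V := \sum_{\mu \mathrel{/\!/} \lambda}\mathfrak m_\mu$. The earlier proof did not actually use the hypothesis that $H$ is a constant multiple of a restricted root; the entire chain of equalities leading to \eqref{eq:4-1} is valid for any $H \in \bar C$. Specializing $H = \lambda$ together with $\ker(d\gamma)_\lambda = \mathcal N_\lambda = \bigcap_{\xi \in T_\lambda^\perp(\mathrm{Ad}(K)\lambda)} \ker A_\xi$ yields formula \eqref{eq:tangentially degenerate} immediately, and in particular produces the a priori containment $\ker(d\gamma)_\lambda \subseteq V$.

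Next I would verify that $\ker(d\gamma)_\lambda$ is itself $\mathrm{ad}(\mathfrak{z}_K^\lambda)$-invariant. This is transparent from \eqref{eq:tangentially degenerate}: the right-hand side is an intersection of the $\mathrm{Ad}(k)$-translates of $V$ as $k$ ranges over $(Z_K^\lambda)_0$, so applying any $\mathrm{Ad}(k')$ with $k' \in (Z_K^\lambda)_0$ merely permutes the sets being intersected. Hence the intersection is $\mathrm{Ad}((Z_K^\lambda)_0)$-invariant, and since $(Z_K^\lambda)_0$ is connected with Lie algebra $\mathfrak{z}_K^\lambda$, this is equivalent to $\mathrm{ad}(\mathfrak{z}_K^\lambda)$-invariance.

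For the maximality, I would take an arbitrary $\mathrm{ad}(\mathfrak{z}_K^\lambda)$-invariant subspace $W \subseteq V$. Connectedness of $(Z_K^\lambda)_0$ upgrades this to $\mathrm{Ad}(k)W = W$ for every $k \in (Z_K^\lambda)_0$, hence $W = \mathrm{Ad}(k^{-1})W \subseteq \mathrm{Ad}(k^{-1})V$. Intersecting over all $k \in (Z_K^\lambda)_0$ gives $W \subseteq \bigcap_{k \in (Z_K^\lambda)_0} \mathrm{Ad}(k)V = \ker(d\gamma)_\lambda$, so $\ker(d\gamma)_\lambda$ is genuinely the maximal such subspace of $V$. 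The ``if and only if'' statement then follows as a direct restatement: tangential degeneracy at $\lambda$ means $\ker(d\gamma)_\lambda \ne 0$, which by the previous two paragraphs is equivalent to the existence of some non-zero $\mathrm{ad}(\mathfrak{z}_K^\lambda)$-invariant subspace of $V$ (take $W = \ker(d\gamma)_\lambda$ in one direction, and invoke the maximality in the other).

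There is no substantial technical obstacle, since Proposition~\ref{pro:criterion of tangentially degenerate} has already carried out the eigenspace computation that produces the key containment in $V$. The only point requiring care is the passage between $\mathrm{Ad}((Z_K^\lambda)_0)$-invariance and $\mathrm{ad}(\mathfrak{z}_K^\lambda)$-invariance, which relies on the connectedness built into the definition of $(Z_K^\lambda)_0$.
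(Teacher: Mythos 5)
Your proposal is correct and follows essentially the same route as the paper: both derive \eqref{eq:tangentially degenerate} directly from \eqref{eq:4-1}, both establish invariance of the intersection by observing that $\mathrm{Ad}(g)$ permutes the translates $\mathrm{Ad}(k)\sum_{\mu \mathrel{/\!/} \lambda}\mathfrak m_\mu$, both obtain maximality from the containment $W=\bigcap_k\mathrm{Ad}(k)W\subset\bigcap_k\mathrm{Ad}(k)\sum_{\mu \mathrel{/\!/} \lambda}\mathfrak m_\mu$, and both pass between $\mathrm{Ad}((Z^\lambda_K)_0)$-invariance and $\mathrm{ad}(\mathfrak z^\lambda_K)$-invariance via connectedness. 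Your remark that the chain of equalities in Proposition~\ref{pro:criterion of tangentially degenerate} needs no assumption on $H$ is accurate and is exactly how the paper reuses that computation.
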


\begin{proof}
From (\ref{eq:4-1}) we have (\ref{eq:tangentially degenerate}) immediately.
Thus the orbit $\mathrm{Ad}(K)\lambda$ is tangentially degenerate if and only if
the right-hand side of (\ref{eq:tangentially degenerate}) is a non-zero vector space.

If there exists a non-zero subspace $V$ of
$\sum_{\mu \mathrel{/\!/} \lambda}{\mathfrak m}_\mu$
which is invariant under $\mathrm{Ad}((Z^\lambda_K)_0)$,
then 
$$
\bigcap_{k\in (Z^\lambda_K)_0}
\mathrm{Ad}(k)\sum_{\mu \mathrel{/\!/} \lambda}
{\mathfrak m}_\mu
\supset \bigcap_{k\in (Z^\lambda_K)_0}\mathrm{Ad}(k)V=V\not=\{0\}.
$$
Hence $\mathrm{Ad}(K)\lambda$ is tangentially degenerate.
Conversely, we assume that $\mathrm{Ad}(K)\lambda$ is tangentially degenerate.
Then
$$
\bigcap_{k\in (Z^\lambda_K)_0}
\mathrm{Ad}(k)\sum_{\mu \mathrel{/\!/} \lambda} {\mathfrak m}_\mu
\subset \sum_{\mu \mathrel{/\!/} \lambda} {\mathfrak m}_\mu
$$
is a non-zero subspace, and we denote it by $V$.
Then for any $g\in (Z^\lambda_K)_0$ we have
\begin{eqnarray*}
\mathrm{Ad}(g)V
&=& \mathrm{Ad}(g)\bigcap_{k\in (Z^\lambda_K)_0}\mathrm{Ad}(k)
\sum_{\mu \mathrel{/\!/} \lambda} {\mathfrak m}_\mu \\
&=& \bigcap_{k\in (Z^\lambda_K)_0}\mathrm{Ad}(gk)
\sum_{\mu \mathrel{/\!/} \lambda} {\mathfrak m}_\mu =V.
\end{eqnarray*}
Thus $V$ is invariant under $\mathrm{Ad}((Z^\lambda_K)_0)$.
Consequently,
the orbit $\mathrm{Ad}(K)\lambda$ is tangentially degenerate if and only if
there exists a non-zero subspace of
$\sum_{\mu \mathrel{/\!/} \lambda} \mathfrak{m}_\mu$
invariant under $\mathrm{Ad}((Z^\lambda_K)_0)$.
Since $\mathfrak{z}^\lambda_K$ is the Lie algebra of a connected Lie group
$(Z^\lambda_K)_0$, we obtain the assertion.
\end{proof}

In particular,
for an orbit of the adjoint representation of a compact Lie group
we have the following corollary.

\begin{cor} \label{cor:semisimple Lie group}
An adjoint orbit of a compact, connected semisimple Lie group
through a root $\alpha$ is tangentially degenerate
if and only if there exists a non-zero subspace of
$$
\mathfrak g \cap (\mathfrak g_\alpha \oplus \mathfrak g_{-\alpha})
$$
which is invariant under $\mathrm{ad}(\mathfrak{z}^\alpha_G)$.
\end{cor}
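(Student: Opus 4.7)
The plan is to realize the adjoint representation of $G$ as the $s$-representation of an auxiliary compact symmetric pair and then invoke Proposition~\ref{pro:tangentially degenerate} directly. Set $\tilde G = G \times G$ with involution $\theta(g_1, g_2) = (g_2, g_1)$. Then $\tilde K := \tilde G_\theta = \Delta G$, the decomposition reads $\tilde{\mathfrak g} = \tilde{\mathfrak k} \oplus \tilde{\mathfrak m}$ with $\tilde{\mathfrak k} = \{(X,X) \mid X \in \mathfrak g\}$ and $\tilde{\mathfrak m} = \{(X,-X) \mid X \in \mathfrak g\}$, and the map $(X,-X) \mapsto X$ is a $\Delta G$-equivariant linear isometry from $\tilde{\mathfrak m}$ onto $\mathfrak g$ carrying the isotropy action to the adjoint action. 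Consequently every adjoint orbit $\mathrm{Ad}(G)\alpha \subset \mathfrak g$ is identified with an $s$-representation orbit of the compact symmetric pair $(\tilde G, \tilde K)$.

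Next I would translate the root-theoretic data. Choose a maximal torus $T \subset G$ with Lie algebra $\mathfrak t$; then $\tilde{\mathfrak a} = \{(H,-H) \mid H \in \mathfrak t\}$ is a maximal abelian subspace of $\tilde{\mathfrak m}$, and under the isomorphism $\tilde{\mathfrak a} \cong \mathfrak t$ the restricted root system of $(\tilde G, \tilde K)$ coincides with the ordinary root system of $\mathfrak g$ relative to $\mathfrak t$. In the same dictionary, the restricted root space $\tilde{\mathfrak m}_\mu$ corresponds to the compact real form $\mathfrak g \cap (\mathfrak g_\mu \oplus \mathfrak g_{-\mu})$, and the stabilizer $Z^\alpha_{\tilde K}$ is carried to $Z^\alpha_G$, so that $\mathfrak z^\alpha_{\tilde K}$ corresponds to $\mathfrak z^\alpha_G$.

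Finally, because the root system of a semisimple Lie algebra is reduced, the only roots proportional to $\alpha$ are $\pm \alpha$ themselves; after fixing a positive system, the sum $\sum_{\mu \mathrel{/\!/} \alpha} \tilde{\mathfrak m}_\mu$ therefore collapses to the single term $\mathfrak g \cap (\mathfrak g_\alpha \oplus \mathfrak g_{-\alpha})$. Applying Proposition~\ref{pro:tangentially degenerate} with $\lambda = \alpha$ then yields the asserted equivalence, together with the explicit description
\[
\ker(d\gamma)_\alpha = \bigcap_{k \in (Z^\alpha_G)_0} \mathrm{Ad}(k)\bigl(\mathfrak g \cap (\mathfrak g_\alpha \oplus \mathfrak g_{-\alpha})\bigr).
\]
The only point requiring care is the verification of the dictionary between the two viewpoints, but this is a standard observation about the symmetric presentation $G = (G \times G)/\Delta G$, so no serious obstacle is expected.
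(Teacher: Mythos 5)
Your proposal is correct and follows exactly the route the paper intends: the corollary is stated as an immediate consequence of Proposition~\ref{pro:tangentially degenerate} applied to the symmetric pair $(G\times G,\Delta G)$, under which $\tilde{\mathfrak m}_\mu$ becomes $\mathfrak g\cap(\mathfrak g_\mu\oplus\mathfrak g_{-\mu})$ and the sum over roots parallel to $\alpha$ collapses by reducedness. You have merely made explicit the standard dictionary that the paper leaves implicit.
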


\begin{lem} \label{lem:4-8}
Let $\lambda$ be a root and $V$ a non-zero subspace of $\mathfrak{m}_\lambda$.
Then $V$ is invariant under $\mathrm{ad}(\mathfrak{z}^\lambda_K)$
if and only if $V$ is invariant under $\mathrm{ad}(\mathfrak{k}_0)$ and satisfies
$$
\left[\sum_{\nu \in R_+^\Delta}
\mathfrak{k}_\nu,\ V\right] = \{ 0 \}.
$$
In addition,
if the action of $\mathfrak{k}_0$ on $\mathfrak{m}_\lambda$ is irreducible
then $V=\mathfrak{m}_\lambda$.
\end{lem}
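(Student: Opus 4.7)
The plan is first to identify $\mathfrak{z}^\lambda_K$ explicitly as a sum of root spaces. For $\mu \in R_+$, Lemma~\ref{lem:basis}(2) says $\mathrm{ad}(\lambda) : \mathfrak{k}_\mu \to \mathfrak{m}_\mu$ is a linear isomorphism whenever $\langle \mu, \lambda \rangle \neq 0$, while a direct check on the defining eigen-equations shows $\mathrm{ad}(\lambda)$ vanishes identically on $\mathfrak{k}_\mu$ when $\langle \mu, \lambda \rangle = 0$; combined with Lemma~\ref{lem:basis}(1) and Lemma~\ref{lem:determination of R_+(H) from H}(2) this yields
$$
\mathfrak{z}^\lambda_K = \mathfrak{k}_0 \oplus \sum_{\nu \in R_+^\Delta}\mathfrak{k}_\nu.
$$
The backward implication of the equivalence is then immediate: if $V$ is preserved by $\mathrm{ad}(\mathfrak{k}_0)$ and annihilated by $\mathrm{ad}(\mathfrak{k}_\nu)$ for every $\nu \in R_+^\Delta$, then $V$ is preserved by $\mathrm{ad}(\mathfrak{z}^\lambda_K)$.

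For the forward implication, the only real content is the following bracket computation. Since $\nu \in R_+^\Delta$ satisfies $\langle \nu, \lambda \rangle = 0$ and $\nu \neq 0$, the combinations $\lambda \pm \nu$ cannot equal $\pm \lambda$: either equation would force $\nu = 0$ or $\nu = \mp 2\lambda$, and the latter is impossible under $\nu \perp \lambda$. The standard rule $[\mathfrak{g}_\alpha, \mathfrak{g}_\beta] \subset \mathfrak{g}_{\alpha + \beta}$ for restricted root spaces (with the right-hand side zero unless $\alpha + \beta \in R \cup \{0\}$) then gives
$$
[\mathfrak{k}_\nu, \mathfrak{m}_\lambda] \subset \mathfrak{m}_{\lambda+\nu} + \mathfrak{m}_{\lambda-\nu},
$$
and by Lemma~\ref{lem:basis}(1) the right-hand side is orthogonal to $\mathfrak{m}_\lambda$. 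Consequently, if $V \subset \mathfrak{m}_\lambda$ is $\mathrm{ad}(\mathfrak{k}_\nu)$-invariant, then $[\mathfrak{k}_\nu, V]$ lies in $V \cap (\mathfrak{m}_{\lambda+\nu} + \mathfrak{m}_{\lambda-\nu}) = \{0\}$; the $\mathfrak{k}_0$-invariance of $V$ is automatic from $\mathfrak{k}_0 \subset \mathfrak{z}^\lambda_K$.

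The addendum follows at once: a non-zero $\mathrm{ad}(\mathfrak{z}^\lambda_K)$-invariant subspace $V$ is, by the forward implication just established, $\mathrm{ad}(\mathfrak{k}_0)$-invariant, so irreducibility of the $\mathfrak{k}_0$-action on $\mathfrak{m}_\lambda$ forces $V = \mathfrak{m}_\lambda$. There is no serious obstacle in this argument; the only point that needs genuine attention is the exclusion $\lambda \pm \nu \neq \pm \lambda$ for $\nu \in R_+^\Delta$, which is precisely what makes the bracket land in a space orthogonal to $\mathfrak{m}_\lambda$ and thereby drives the whole forward direction.
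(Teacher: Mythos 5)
Your proof is correct and follows essentially the same route as the paper: identify $\mathfrak{z}^\lambda_K = \mathfrak{k}_0 \oplus \sum_{\nu\in R_+^\Delta}\mathfrak{k}_\nu$ and observe that $[\mathfrak{k}_\nu, V]$ lies in $\mathfrak{m}_{\lambda+\nu}+\mathfrak{m}_{\lambda-\nu}$, which meets $\mathfrak{m}_\lambda$ (hence $V$) trivially. The only difference is that you make explicit the exclusion $\lambda\pm\nu\neq\pm\lambda$ underlying that trivial intersection, which the paper leaves implicit.
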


\begin{proof}
Since
$$
\mathfrak{z}^\lambda_K =
\{ X \in \mathfrak k \ | \ [X, \lambda] = 0 \}
= \mathfrak{k}_0 \oplus 
\sum_{\nu \in R_+^\Delta} \mathfrak{k}_\nu,
$$
$V$ is invariant under $\mathrm{ad}(\mathfrak{z}^\lambda_K)$
if and only if $V$ is invariant under $\mathrm{ad}(\mathfrak{k}_0)$ and
$$
\left[\sum_{\nu \in R_+^\Delta}
\mathfrak{k}_\nu,\ V \right] \subset V \subset \mathfrak{m}_\lambda.
$$
On the other hand,
$$
\left[\sum_{\nu \in R_+^\Delta}
\mathfrak{k}_\nu,\ V \right]
\subset
\left[\sum_{\nu \in R_+^\Delta}
\mathfrak{k}_\nu,\ \mathfrak{m}_\lambda \right]
\subset 
\sum_{\nu \in R_+^\Delta}
(\mathfrak{m}_{\lambda +\nu}\oplus \mathfrak{m}_{\lambda -\nu}).
$$
Hence we have
$$
\left[\sum_{\nu \in R_+^\Delta}
\mathfrak{k}_\nu,\ V \right] \subset 
\left( \mathfrak{m}_\lambda \cap 
\sum_{\nu \in R_+^\Delta}
(\mathfrak{m}_{\lambda +\nu}\oplus \mathfrak{m}_{\lambda -\nu}) \right)
= \{0\}.
$$
\end{proof}

\begin{lem}
The root space $\mathfrak{m}_\delta$ corresponds to
the highest root $\delta$ is a subspace of
$\sum_{\mu \mathrel{/\!/} \delta} \mathfrak{m}_\mu$
invariant under $\mathrm{ad}(\mathfrak{z}_K^\delta )$.
\end{lem}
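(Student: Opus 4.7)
The plan is to verify the two parts of the claim separately: (i) $\mathfrak{m}_\delta$ sits inside $\sum_{\mu \mathrel{/\!/} \delta} \mathfrak{m}_\mu$, and (ii) $[\mathfrak{z}_K^\delta, \mathfrak{m}_\delta] \subset \mathfrak{m}_\delta$. Assertion (i) is immediate, since $\delta$ is parallel to itself, so $\mathfrak{m}_\delta$ appears as one of the summands on the right-hand side.

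For (ii), I would first set $\Delta = \{\mu \in F \mid \langle \mu, \delta \rangle > 0\}$, so that $\delta \in C^\Delta$, and then use the decomposition $\mathfrak{z}_K^\delta = \mathfrak{k}_0 \oplus \sum_{\nu \in R_+^\Delta} \mathfrak{k}_\nu$ recorded in the proof of Lemma~\ref{lem:4-8}. By Lemma~\ref{lem:determination of R_+(H) from H}, one has $R_+^\Delta = \{\nu \in R_+ \mid \langle \nu, \delta \rangle = 0\}$, so every $\nu \in R_+^\Delta$ is perpendicular to $\delta$. It then suffices to check invariance on each of these summands.

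For the $\mathfrak{k}_0$-piece, the inclusion $[\mathfrak{k}_0, \mathfrak{m}_\delta] \subset \mathfrak{m}_\delta$ is standard: since $\mathfrak{k}_0$ centralizes $\mathfrak{a}$, the Jacobi identity shows that $\mathrm{ad}(X)$ for $X \in \mathfrak{k}_0$ preserves every restricted root space, and the bracket lies in $\mathfrak{m}$. For the $\mathfrak{k}_\nu$-piece with $\nu \in R_+^\Delta$, the bracket rule gives
\[
[\mathfrak{k}_\nu, \mathfrak{m}_\delta] \subset \mathfrak{m}_{\delta+\nu} + \mathfrak{m}_{\delta-\nu}.
\]
Because $\delta$ is the highest root, $\delta + \nu$ is not a root, so $\mathfrak{m}_{\delta+\nu} = \{0\}$. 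Because $\nu \perp \delta$, Lemma~\ref{lem:Wolf} (Wolf's lemma) tells us that $\delta - \nu$ is not a root either, so $\mathfrak{m}_{\delta-\nu} = \{0\}$. Hence $[\mathfrak{k}_\nu, \mathfrak{m}_\delta] = \{0\} \subset \mathfrak{m}_\delta$, and combining the two cases yields (ii). The only substantive input beyond standard root-space bookkeeping is Wolf's lemma; there is no serious obstacle here.
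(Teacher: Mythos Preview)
Your argument is correct and matches the paper's proof: both identify $\mathfrak{z}_K^\delta = \mathfrak{k}_0 \oplus \sum_{\nu \perp \delta} \mathfrak{k}_\nu$, use Lemma~\ref{lem:Wolf} to conclude $\delta \pm \nu \notin R$ for $\nu \perp \delta$, and deduce invariance via the bracket rule. The only cosmetic difference is that the paper packages the final step as an appeal to Lemma~\ref{lem:4-8}, whereas you unfold that criterion explicitly.
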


\begin{proof}
The Lie algebra $\mathfrak{z}_K^\delta$ of $Z^\delta_K$ is given by
$$
\mathfrak{z}_K^\delta = \{ X \in \mathfrak{k} \mid [X,\delta ]=0\}
=\mathfrak{k}_0 \oplus 
\sum_{\langle \nu, \delta \rangle = 0} \mathfrak{k}_\nu.
$$
From Lemma~\ref{lem:Wolf}, we have $\delta \pm \nu \not\in R$
for any $\nu \in R_+$ which is perpendicular to $\delta$.
Hence from Lemma~\ref{lem:4-8},
$\mathfrak{m}_\delta$ is invariant under $\mathrm{ad}(\mathfrak{z}_K^\delta )$.
\end{proof}

From this lemma, we have the following proposition immediately.

\begin{pro} \label{pro:highest root}
Let $(G,K)$ be a compact symmetric pair.
Then the orbit $\mathrm{Ad}(K)\delta$ through the highest root $\delta$ is
tangentially degenerate.
Moreover,
$\ker(d\gamma )_\delta =\mathfrak{m}_\delta$ except the case of type $BC$.
\end{pro}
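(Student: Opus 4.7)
The first assertion follows essentially for free from assembling the preceding two results. The lemma just stated exhibits $\mathfrak{m}_\delta$ as a non-zero subspace of $\sum_{\mu \mathrel{/\!/} \delta} \mathfrak{m}_\mu$ that is invariant under $\mathrm{ad}(\mathfrak{z}_K^\delta)$. Feeding this into the criterion of Proposition \ref{pro:tangentially degenerate} (in its ``if'' direction) immediately yields that $\mathrm{Ad}(K)\delta$ is tangentially degenerate, and moreover $\ker(d\gamma)_\delta \supseteq \mathfrak{m}_\delta$ via the inclusion
$$
\ker(d\gamma)_\delta \;=\; \bigcap_{k \in (Z_K^\delta)_0} \mathrm{Ad}(k)\sum_{\mu \mathrel{/\!/} \delta} \mathfrak{m}_\mu \;\supseteq\; \bigcap_{k \in (Z_K^\delta)_0} \mathrm{Ad}(k)\,\mathfrak{m}_\delta \;=\; \mathfrak{m}_\delta.
$$

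For the second assertion, the plan is to pin down the ambient sum $\sum_{\mu \mathrel{/\!/} \delta}\mathfrak{m}_\mu$ and invoke the maximality half of Proposition \ref{pro:tangentially degenerate}. A root $\mu \in R$ with $\mu \mathrel{/\!/} \delta$ must satisfy $\mu = c\delta$ for some nonzero real $c$. In any reduced restricted root system (types $A$, $B$, $C$, $D$, $E$, $F$, $G$), the only multiples of a root that are again roots are $\pm\delta$, so $\sum_{\mu \mathrel{/\!/} \delta}\mathfrak{m}_\mu = \mathfrak{m}_\delta$. The only exception is type $BC$, which is non-reduced: there $\delta$ and $\delta/2$ are both roots, so the sum properly contains $\mathfrak{m}_\delta$. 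This is precisely the reason for the exclusion in the statement.

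Once this computation is in hand, Proposition \ref{pro:tangentially degenerate} says that $\ker(d\gamma)_\delta$ is the maximal $\mathrm{ad}(\mathfrak{z}_K^\delta)$-invariant subspace of $\sum_{\mu \mathrel{/\!/} \delta}\mathfrak{m}_\mu$. Outside type $BC$ this ambient space is $\mathfrak{m}_\delta$ itself, so the opposite inclusion $\ker(d\gamma)_\delta \subseteq \mathfrak{m}_\delta$ is automatic, and combining with the first paragraph gives equality. The only real ``obstacle'' is the root-system observation about parallel roots, and this is both standard and short; the rest is a straight application of the machinery already set up.
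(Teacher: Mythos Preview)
Your argument is correct and is exactly the reasoning the paper leaves implicit when it says the proposition follows ``immediately'' from the preceding lemma: invoke Proposition~\ref{pro:tangentially degenerate} for both the degeneracy and the inclusion $\mathfrak{m}_\delta \subset \ker(d\gamma)_\delta$, then use that outside type $BC$ the only positive root parallel to $\delta$ is $\delta$ itself to force $\sum_{\mu \mathrel{/\!/} \delta}\mathfrak{m}_\mu = \mathfrak{m}_\delta$ and hence equality. The paper confirms this reading a few lines later when, in the $BC$ case, it records the chain $\mathfrak{m}_{2e_1} \subset \mathcal{N}_\lambda \subset \mathfrak{m}_{2e_1} + \mathfrak{m}_{e_1}$.
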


Similarly we also have the following proposition immediately. 

\begin{pro}
Let $(G,K)$ be a compact symmetric pair with restricted root system of type $G_2$.
Then the orbit through any root $\lambda$ is
tangentially degenerate.
Moreover, $\ker(d\gamma )_\lambda =\mathfrak{m}_\lambda$.
\end{pro}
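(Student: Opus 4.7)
The plan is to split the argument according to whether the root $\lambda$ is long or short in $G_2$, and in each case to assemble the result from the machinery already developed in the paper.

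First I would dispose of the long roots. The highest root $\delta$ is a long root, and Proposition~\ref{pro:highest root} already yields $\ker(d\gamma)_\delta = \mathfrak{m}_\delta$, with no exclusion since $G_2$ is not of type $BC$. Any other long root $\lambda$ is conjugate to $\delta$ under the Weyl group, which is realized inside the $\mathrm{Ad}(K)$-action on $\mathfrak{a}$. Hence $\mathrm{Ad}(K)\lambda = \mathrm{Ad}(K)\delta$, and applying $\mathrm{Ad}(k)$ (where $k\in K$ sends $\delta$ to $\lambda$) transports the kernel: $\ker(d\gamma)_\lambda = \mathrm{Ad}(k)\mathfrak{m}_\delta = \mathfrak{m}_\lambda$.

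For short roots, Proposition~\ref{pro:4-6} confirms that conditions (a) and (b) of Proposition~\ref{pro:root and tangentially degeneracy} are satisfied, so Proposition~\ref{pro:root and tangentially degeneracy} gives tangential degeneracy of $\mathrm{Ad}(K)\lambda$ immediately. To identify the kernel I would invoke Proposition~\ref{pro:tangentially degenerate}, according to which $\ker(d\gamma)_\lambda$ is the maximal $\mathrm{ad}(\mathfrak{z}^\lambda_K)$-invariant subspace of $\sum_{\mu\mathrel{/\!/}\lambda}\mathfrak{m}_\mu$. The key simplification is that $G_2$ is a reduced root system, so no nonzero root is a nontrivial scalar multiple of $\lambda$; consequently $\sum_{\mu\mathrel{/\!/}\lambda}\mathfrak{m}_\mu = \mathfrak{m}_\lambda$, and therefore $\ker(d\gamma)_\lambda \subset \mathfrak{m}_\lambda$.

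For the reverse inclusion I would verify that $\mathfrak{m}_\lambda$ itself is $\mathrm{ad}(\mathfrak{z}^\lambda_K)$-invariant via Lemma~\ref{lem:4-8}. The subspace $\mathfrak{m}_\lambda$ is automatically $\mathrm{ad}(\mathfrak{k}_0)$-stable because $\mathfrak{k}_0$ centralizes $\mathfrak{a}$ and hence preserves restricted root spaces. The remaining condition $[\mathfrak{k}_\nu,\mathfrak{m}_\lambda]=\{0\}$ for $\nu\in R^\Delta_+$ follows from the general argument inside the proof of Lemma~\ref{lem:4-8}: the bracket lands in $\mathfrak{m}_{\lambda+\nu}+\mathfrak{m}_{\lambda-\nu}$, and since $\nu\neq 0$ and $G_2$ is reduced, this intersects $\mathfrak{m}_\lambda$ trivially. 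Combining this with Proposition~\ref{pro:tangentially degenerate} yields $\ker(d\gamma)_\lambda = \mathfrak{m}_\lambda$. There is no serious obstacle; the only delicate point is to observe that reducedness of $G_2$ forces both the ambient sum in Proposition~\ref{pro:tangentially degenerate} to collapse to $\mathfrak{m}_\lambda$ and the invariance check in Lemma~\ref{lem:4-8} to become automatic.
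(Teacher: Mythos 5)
Your overall route is the paper's: reduce to Proposition~\ref{pro:tangentially degenerate} and Lemma~\ref{lem:4-8}, note that reducedness of $G_2$ collapses $\sum_{\mu\mathrel{/\!/}\lambda}\mathfrak{m}_\mu$ to $\mathfrak{m}_\lambda$, and handle long roots via the highest root and Weyl conjugacy. All of that is fine. But the justification you give for the one substantive step in the short-root case --- that $[\mathfrak{k}_\nu,\mathfrak{m}_\lambda]=\{0\}$ for $\nu\in R_+^\Delta$ --- does not work as written. You argue that the bracket lands in $\mathfrak{m}_{\lambda+\nu}+\mathfrak{m}_{\lambda-\nu}$, which meets $\mathfrak{m}_\lambda$ trivially; but having trivial intersection with $\mathfrak{m}_\lambda$ does not make the bracket zero. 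If that inference were valid it would show $\mathfrak{m}_\lambda$ is $\mathrm{ad}(\mathfrak{z}^\lambda_K)$-invariant for \emph{every} root of \emph{every} restricted root system, hence that every root orbit is tangentially degenerate --- contradicting the second half of Section~4, where short-root orbits in types $B$, $C$, $BC$, $F_4$ are shown \emph{not} to be degenerate precisely because $[\mathfrak{k}_\nu,\mathfrak{m}_\lambda]\neq\{0\}$ for a suitable $\nu\perp\lambda$ with $\lambda\pm\nu\in R$. The trivial-intersection observation is the content of the ``only if'' direction of Lemma~\ref{lem:4-8} (an invariant subspace of $\mathfrak{m}_\lambda$ is forced to be annihilated by $\sum\mathfrak{k}_\nu$); it cannot be used to \emph{establish} invariance.

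The repair is immediate and is already sitting in your own paragraph: by Proposition~\ref{pro:4-6}, a short root $\lambda$ of $G_2$ satisfies condition (b) of Proposition~\ref{pro:root and tangentially degeneracy}, i.e.\ $\lambda+\nu\notin R$ and $\lambda-\nu\notin R$ for all $\nu\in R_+^\Delta$, so $\mathfrak{m}_{\lambda+\nu}=\mathfrak{m}_{\lambda-\nu}=\{0\}$ and the bracket vanishes for the right reason. (For the long root the same vanishing comes from Lemma~\ref{lem:Wolf}, which is exactly how the paper treats $\mathfrak{m}_\delta$ before stating Proposition~\ref{pro:highest root}.) With that substitution your argument coincides with the paper's.
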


\begin{pro}
Let $G$ be a compact connected simple Lie group.
An adjoint orbit of $G$ is tangentially degenerate
if and only if it is through a long root, or a short root
in the case of compact simple Lie group $G_2$.
\end{pro}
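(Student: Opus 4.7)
The plan is to reduce this to Theorem~\ref{thm:main} by realizing the adjoint representation of $G$ as an $s$-representation. I would consider the symmetric pair $(G\times G,\Delta G)$, where $\Delta G=\{(g,g)\mid g\in G\}$ is the diagonal and the involution swaps the two factors of $G\times G$. Under the identification $\mathfrak m=\{(X,-X)\mid X\in\mathfrak g\}\cong\mathfrak g$, the $s$-representation of $(G\times G,\Delta G)$ is precisely the adjoint action of $G$ on $\mathfrak g$, and simplicity of $G$ guarantees that this representation is irreducible.

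Next I would check that, under the same identification, a Cartan subalgebra $\mathfrak t\subset\mathfrak g$ serves as a maximal abelian subspace of $\mathfrak m$, the restricted root system coincides with the ordinary root system of $\mathfrak g$, and each restricted root space $\mathfrak m_\alpha$ corresponds to $\mathfrak g\cap(\mathfrak g_\alpha\oplus\mathfrak g_{-\alpha})$. Because $G$ is compact, connected and simple, this root system is an irreducible reduced root system of type $A_n$, $B_n$, $C_n$, $D_n$, $E_6$, $E_7$, $E_8$, $F_4$ or $G_2$, and in particular is never of type $BC$.

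Applying Theorem~\ref{thm:main} to this $s$-representation then yields the desired characterization: the adjoint orbit $\mathrm{Ad}(G)\alpha$ through a root $\alpha$ is tangentially degenerate if and only if $\alpha$ is a long root, or $\alpha$ is a short root and the underlying root system is of type $G_2$. Proposition~\ref{pro:criterion of tangentially degenerate} further ensures that any tangentially degenerate adjoint orbit must pass through a scalar multiple of a root, so no other orbits can arise. There is no serious obstacle: once the identification between the adjoint representation and the $s$-representation of $(G\times G,\Delta G)$ is made, the statement is an immediate specialization of the main theorem.
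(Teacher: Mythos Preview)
Your reduction to Theorem~\ref{thm:main} is circular. In this paper Theorem~\ref{thm:main} is stated at the beginning of Section~4, but its proof occupies the entire section; in particular, the adjoint representation of $G$ \emph{is} the $s$-representation of the type~II pair $(G\times G,\Delta G)$, and the proposition you are asked to prove is precisely the ingredient that handles this pair. The later propositions in Subsection~4.2 treat only specific type~I pairs (Hermitian pairs, normal real forms, real and quaternionic Grassmannians, the three exceptional quaternionic pairs); none of them covers $(G\times G,\Delta G)$ when $G$ is of type $B_n$, $C_n$, or $F_4$. So when you invoke the ``only if'' direction of Theorem~\ref{thm:main} for this $s$-representation you are assuming exactly the non-degeneracy statement that remains to be shown.

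The paper's own argument is direct and short. One direction is already available from Proposition~\ref{pro:root and tangentially degeneracy} together with Proposition~\ref{pro:4-6}. For the converse, let $\alpha$ be a short root and $G\neq G_2$, and apply the criterion of Corollary~\ref{cor:semisimple Lie group}: suppose $V\subset\mathfrak g\cap(\mathfrak g_\alpha\oplus\mathfrak g_{-\alpha})$ is $\mathrm{ad}(\mathfrak z^\alpha_G)$-invariant. By the proof of Proposition~\ref{pro:4-6} there exists $\beta\in R_+$ with $\langle\alpha,\beta\rangle=0$ and $\alpha\pm\beta\in R$. For $v=X_\alpha+X_{-\alpha}\in V^{\mathbf C}$ and $0\neq X_\beta\in\mathfrak g_\beta$ one has $[X_\beta,v]\in(\mathfrak g_{\beta+\alpha}\oplus\mathfrak g_{\beta-\alpha})\cap V^{\mathbf C}=\{0\}$; since each $\mathfrak g_{\pm\alpha}$ is one-dimensional and $[\mathfrak g_\beta,\mathfrak g_{\pm\alpha}]=\mathfrak g_{\beta\pm\alpha}\neq\{0\}$, this forces $X_{\pm\alpha}=0$ and hence $V=\{0\}$. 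That computation is what you need to supply in place of the appeal to Theorem~\ref{thm:main}.
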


\begin{proof}
We have already shown that the orbit through a long root,
or a short root of the simple Lie group $G_2$ is tangentially degenerate.
Therefore it suffices to show that, in the case of $G \neq G_2$,
the orbit $\mathrm{Ad}(G) \alpha$ through a short root $\alpha \in R_+$
is not tangentially degenerate.

Assume that $V$ is a subspace of
$\mathfrak{g}\cap (\mathfrak{g}_\alpha\oplus \mathfrak{g}_{-\alpha})$
invariant under $\mathrm{ad}(\mathfrak z^\alpha_G)$.
Then the complexification
$V^{\mathbf C}\subset \mathfrak{g}_\alpha\oplus \mathfrak{g}_{-\alpha}$ of $V$
is a complex vector space which is invariant under
$\mathrm{ad}(\mathfrak z^\alpha_G)$.
We take $v \in V^{\mathbf C}$ and express as
$v=X_\alpha +X_{-\alpha}\; (X_{\pm\alpha}\in\mathfrak{g}_{\pm\alpha})$.
In this case, from Proposition~\ref{pro:4-6},
there exists $\beta \in R_+$ which satisfies
$\langle \beta, \alpha \rangle = 0$ and $\alpha \pm \beta \in R$.
We take a non-zero vector $X_\beta\in\mathfrak{g}_\beta$.
Then
$$
[X_\beta ,v]=[X_\beta ,X_\alpha ]+[X_\beta ,X_{-\alpha}]
\in (\mathfrak{g}_{\beta +\alpha} \oplus \mathfrak{g}_{\beta -\alpha}) \cap V^{\mathbf C}
=\{0\}.
$$
This shows $X_{\pm\alpha}=0$, since
$[\mathfrak{g}_\beta ,\mathfrak{g}_{\pm\alpha}]=\mathfrak{g}_{\beta\pm\alpha}$.
Thus we obtain $V = \{0\}$.
Hence from Corollary \ref{cor:semisimple Lie group},
$\mathrm{Ad}(G)\alpha$ is not tangentially degenerate.
\end{proof}

In Proposition~\ref{pro:highest root} we obtained the spaces of the relative nullity
of the orbit through a highest root except the case of type $BC$.
In the rest of this subsection, we shall study the space of relative nullity
of the orbit through a highest root in the case of the restricted root system
of type $BC_p$.  In this case we can put
\begin{eqnarray*}
&& R = \{ \pm 2e_i \ | \ 1 \leq i \leq p \} \cup
\{ \pm e_i \ | \ 1 \leq i \leq p \} \cup
\{ \pm e_i \pm e_j \ | \ 1 \leq i < j\leq p \}, \\
&& \lambda = 2e_1.
\end{eqnarray*}
We already know that the space of relative nullity $\mathcal N_\lambda$
of $\mathrm{Ad}(K)\lambda$ satisfies
$$
\mathfrak m_{2e_1} \subset \mathcal N_\lambda
\subset \mathfrak m_{2e_1} + \mathfrak m_{e_1}
$$
and invariant under $\mathrm{ad}(\mathfrak z_K^\lambda)$.
Since
\begin{eqnarray*}
R_+^\Delta &=& \{ \mu \in R_+ \ | \ \langle \lambda, \mu \rangle = 0 \} \\
&=& \{ 2e_i \ | \ 2 \leq i \leq p \} \cup
\{ e_i \ | \ 2 \leq i \leq p \} \cup
\{ e_i \pm e_j \ | \ 2 \leq i < j\leq p \},
\end{eqnarray*}
we have
$$
\mathfrak z_K^\lambda = \mathfrak k_0 + \sum_{\mu \in R_+^\Delta} \mathfrak k_\mu
= \mathfrak k_0 + \sum_{2 \leq i \leq p} \mathfrak k_{2e_i}
+ \sum_{2 \leq i \leq p} \mathfrak k_{e_i}
+ \sum_{2 \leq i < j \leq p} \mathfrak k_{e_i \pm e_j}.
$$

First we determine the space of relative nullity of the orbit through a long root
when $(G,K)$ is a Hermitian symmetric pair with restricted root system of type $BC$.
For this purpose, we recall the following two lemmas.

\begin{lem}[\cite{Song} Lemma 2.3] \label{lem:complex structure}
For a Hermitian symmetric space,
the complex structure $J$ translates restricted root spaces as following:
$$
J \mathfrak m_{e_i \pm e_j} = \mathfrak m_{e_i \mp e_j}, \quad
J \mathfrak m_{e_i} = \mathfrak m_{e_i}, \quad
J \mathfrak a = \sum_{i=1}^p \mathfrak m_{2e_i}.
$$
\end{lem}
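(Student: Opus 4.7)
The plan is to exploit the standard realization of the complex structure on a Hermitian symmetric space. For an irreducible Hermitian symmetric pair $(G,K)$ the center $\mathfrak{z}(\mathfrak{k})$ of $\mathfrak{k}$ is one-dimensional, and the complex structure $J$ on $\mathfrak{m}$ can be written as $J = \mathrm{ad}(Z_0)|_{\mathfrak{m}}$ for a suitably normalized generator $Z_0 \in \mathfrak{z}(\mathfrak{k})$ with $\mathrm{ad}(Z_0)^2 = -\mathrm{id}$ on $\mathfrak{m}$. Since $Z_0$ centralizes $\mathfrak{k}$, in particular $Z_0 \in \mathfrak{k}_0$, and one may extend $\mathfrak{a}$ to a Cartan subalgebra $\mathfrak{t} = \mathfrak{a} \oplus \mathfrak{b}$ of $\mathfrak{g}$ with $Z_0 \in \mathfrak{b}$.

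Next, I would pass to $\mathfrak{g}^{\mathbf{C}}$ and observe that $\mathrm{ad}(Z_0)$ acts on each $\tilde{\mathfrak{g}}_\alpha$ as multiplication by $\sqrt{-1}\langle \alpha, Z_0\rangle$. The normalization together with the identity $J^2 = -\mathrm{id}$ forces $\langle \alpha, Z_0\rangle = \pm 1$ on every $\alpha \in \tilde R - \tilde R_0$ and $\langle \alpha, Z_0\rangle = 0$ on $\tilde R_0$. Equivalently, the Harish-Chandra decomposition $\mathfrak{m}^{\mathbf{C}} = \mathfrak{p}^+ \oplus \mathfrak{p}^-$ into $\pm\sqrt{-1}$ eigenspaces of $J$ identifies $\mathfrak{p}^\pm$ with the span of those $\tilde{\mathfrak{g}}_\alpha$ for which $\langle \alpha, Z_0\rangle = \pm 1$.

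With this in hand, I would classify, for each restricted root $\lambda$ in the $BC_p$ (or $C_p$) system, the fiber of $\alpha \mapsto \bar\alpha$ over $\lambda$ and read off how the corresponding real spans decompose under $J$. The expected pattern is: (i) the $T$-roots over $e_i + e_j$ and $e_i - e_j$ are swapped under $J$ combined with the real structure on $\mathfrak{m}^{\mathbf{C}}$, yielding $J\mathfrak{m}_{e_i \pm e_j} = \mathfrak{m}_{e_i \mp e_j}$; (ii) above each short root $e_i$ the $T$-roots pair up inside the single restricted root space, making $\mathfrak{m}_{e_i}$ a $J$-invariant complex subspace; and (iii) the real form of each pair of $T$-roots over $\pm 2 e_i$ spans $\mathbf{R} H_{e_i} \oplus \mathfrak{m}_{2e_i}$, on which $J$ acts as a rotation by $\pi/2$, producing $J \mathfrak{a} = \sum_{i=1}^p \mathfrak{m}_{2e_i}$.

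The main obstacle is the bookkeeping underlying the third step: one needs an explicit description of the fiber of the restriction $\tilde R - \tilde R_0 \to R$ over each restricted root together with the sign of $\langle \alpha, Z_0\rangle$ on each fiber element. This combinatorics is best organized through the theory of strongly orthogonal restricted roots and the Cayley transform for Hermitian symmetric spaces; once it is in place, the three displayed formulas follow directly.
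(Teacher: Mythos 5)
The paper does not actually prove this lemma: it is quoted directly from Song's paper (Lemma 2.3 there) and used as a black box, so there is no internal argument to compare yours against. Your overall strategy --- realize $J$ as $\mathrm{ad}(Z_0)$ for a generator $Z_0$ of the one-dimensional center of $\mathfrak{k}$, diagonalize $\mathrm{ad}(Z_0)$, and transfer the resulting Harish--Chandra decomposition to the restricted root spaces --- is the standard and, in principle, correct route.

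However, as written your setup contains a genuine inconsistency. You place $Z_0$ in a Cartan subalgebra $\mathfrak{t}=\mathfrak{a}\oplus\mathfrak{b}$ containing $\mathfrak{a}$; this is impossible. If $Z_0$ belonged to such a $\mathfrak{t}$ then $[Z_0,\mathfrak{a}]=0$, but $[Z_0,H]=JH\neq 0$ for every nonzero $H\in\mathfrak{a}$, since $J$ is an almost complex structure on $\mathfrak{m}$ and $\mathfrak{a}\subset\mathfrak{m}$. (Centrality of $Z_0$ in $\mathfrak{k}$ gives $[Z_0,\mathfrak{k}]=0$, not $[Z_0,\mathfrak{a}]=0$; the whole point is that $\mathrm{ad}(Z_0)$ is nonsingular on $\mathfrak{m}$.) Consequently $\mathrm{ad}(Z_0)$ does not act as the scalar $\sqrt{-1}\langle\alpha,Z_0\rangle$ on the root spaces $\tilde{\mathfrak{g}}_\alpha$ taken with respect to $\mathfrak{a}\oplus\mathfrak{b}$, and the eigenvalue bookkeeping in your second and third paragraphs does not get off the ground in that Cartan. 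The repair is the one you allude to only at the very end: $Z_0$ lies in a compact Cartan subalgebra $\mathfrak{t}_{\mathfrak{k}}\subset\mathfrak{k}$ (Hermitian symmetric spaces are inner), the decomposition $\mathfrak{m}^{\mathbf{C}}=\mathfrak{p}^+\oplus\mathfrak{p}^-$ is taken there, and one passes to $\mathfrak{a}\oplus\mathfrak{b}$ by the Cayley transform associated with a maximal set of strongly orthogonal noncompact roots, exactly as the paper's Appendix does in the quaternionic case. Until that transfer is carried out explicitly --- identifying the fibers of $\tilde{R}\to R$ over $e_i\pm e_j$, $e_i$ and $2e_i$ on the compact-Cartan side and tracking the real structure --- the three displayed formulas are asserted rather than proved.
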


We denote the Hopf fibration by $\pi : S^{2n+1} \longrightarrow \mathbf CP^n$.

\begin{lem}[\cite{IKM} Lemma 2.2] \label{lem:Hopf fibration}
Let $M \subset \mathbf CP^n$ be a complex submanifold of complex dimension $k$.
Then $\pi^{-1}(M)$ is a submanifold of dimension $2k+1$
with degenerate Gauss mapping of $S^{2n+1}$ .
Moreover, if $M$ is compact and not a complex projective subspace,
then the rank of Gauss mapping is equal to $2k$.
\end{lem}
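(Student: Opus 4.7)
The plan is to exploit the fact that $\pi^{-1}(M)$ is invariant under the $S^1$-action defining the Hopf fibration, and that this $S^1$-action sits inside the Gauss map in a way that forces degeneracy. Concretely, view $S^{2n+1}\subset\mathbf C^{n+1}$ with complex structure $J$; the vertical direction of $\pi$ at $x$ is $\mathbf R Jx$, and the horizontal subspace is the orthogonal complement of $\mathbf C x$ in $\mathbf C^{n+1}$. Since $M$ is a complex submanifold of $\mathbf CP^n$, the tangent space $T_{\pi(x)}M$ is $J$-invariant, so its horizontal lift $H_x\subset(\mathbf C x)^\perp$ is a complex subspace of complex dimension $k$. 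Hence
\[
T_x\pi^{-1}(M)=\mathbf R Jx\oplus H_x,\qquad \gamma(x)=\mathbf R x\oplus T_x\pi^{-1}(M)=\mathbf C x\oplus H_x,
\]
so $\gamma(x)$ is a complex $(k+1)$-plane in $\mathbf C^{n+1}$, i.e.\ a $J$-invariant real $(2k+2)$-plane.

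Next, I would check that $\gamma$ is constant along each Hopf fiber. The fiber through $x$ is $\{e^{\sqrt{-1}t}x\}$, and multiplication by $e^{\sqrt{-1}t}$ preserves $\mathbf C x$ and acts on $H_x$ by a complex linear map, hence preserves $H_x$ as a set. Therefore $\gamma(e^{\sqrt{-1}t}x)=\mathbf C x\oplus H_x=\gamma(x)$, and $\gamma$ descends to a map
\[
\bar\gamma:M\longrightarrow G^{\mathbf C}_{k+1}(\mathbf C^{n+1})\subset G_{2k+2}(\mathbf R^{2n+2}),\qquad \gamma=\bar\gamma\circ\pi.
\]
In particular $\ker(d\gamma)_x\supset\mathbf R Jx$, giving $\mathrm{rank}\,d\gamma\le 2k<2k+1=\dim\pi^{-1}(M)$, which already proves the first assertion.

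For the second assertion, the task becomes showing $\bar\gamma$ has rank $2k$ somewhere, equivalently that $\bar\gamma$ is an immersion at a generic point of $M$. The map $\bar\gamma$ is essentially the standard projective (complex) Gauss map assigning to $p\in M\subset\mathbf CP^n$ its projective tangent space $\widetilde T_pM\in G^{\mathbf C}_{k+1}(\mathbf C^{n+1})$. At this point I would invoke the classical fact from projective algebraic geometry (going back to Griffiths--Harris) that the projective Gauss map of a complex submanifold of $\mathbf CP^n$ is degenerate only when the submanifold is a linear projective subspace: if $d\bar\gamma$ has a nonzero kernel at $p$ in a direction $v$, then the projective line through $p$ in the direction $v$ is contained in $M$, and iterating shows $M$ contains through $p$ a linear subspace of dimension equal to the rank of the kernel; compactness plus the complex structure then forces $M$ to be a linear subspace if the defect is positive on a dense set.

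The first two paragraphs are essentially bookkeeping on the Hopf fibration and cause no real difficulty. The main obstacle is the last step, the non-degeneracy of the projective Gauss map for $M\subset\mathbf CP^n$ not a linear subspace; I would either quote the Griffiths--Harris result directly or sketch the infinitesimal argument (if $v\in\ker d\bar\gamma_p$, then the second fundamental form of $M$ vanishes in the direction $v$, so the geodesic--equivalently, the projective line---through $p$ in direction $v$ stays in $M$, and a connectedness/compactness argument rules this out unless $M$ itself is a complex projective subspace).
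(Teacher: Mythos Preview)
The paper does not actually prove this lemma: it is quoted verbatim as Lemma~2.2 from \cite{IKM} and used as a black box in the proof of Proposition~\ref{Hermitian symmetric space}. So there is no ``paper's own proof'' to compare your attempt against.

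That said, your approach is sound and is essentially the standard one. The first part---identifying $\gamma(x)=\mathbf C x\oplus H_x$ as a complex $(k+1)$-plane and checking that it is constant along Hopf fibers because $H_x$ is $J$-invariant---is correct and gives the degeneracy immediately. Your identification of the descended map $\bar\gamma$ with the complex projective Gauss map of $M\subset\mathbf CP^n$ is also correct, and the second assertion then reduces, as you say, to the classical fact that this map is an immersion unless $M$ is a linear subspace.

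One remark on your closing sketch: from $v\in\ker d\bar\gamma_p$ you get $h(v,\cdot)=0$ at $p$, i.e.\ $v$ lies in the relative nullity space, but this alone at a single point does not force the projective line in direction $v$ to lie in $M$. What one actually uses is that on the open set where the index of relative nullity is minimal the nullity spaces form an integrable distribution whose leaves are totally geodesic (this is the standard consequence of the Codazzi equation, as in Chern--Kuiper / Ferus); in $\mathbf CP^n$ those leaves are then pieces of linear subspaces, and compactness of $M$ finishes the argument. Quoting the Griffiths--Harris result directly, as you suggest, is the cleanest route.
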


Now we shall prove the following proposition.

\begin{pro} \label{Hermitian symmetric space}
Assume that $p \geq 2$.
Let $(G,K)$ be a Hermitian symmetric pair with restricted root system of type $BC_p$.
Then the space of relative nullity $\mathcal N_\lambda$
of the orbit through a long root $\lambda \in R$
is given by $\mathcal N_\lambda = \mathfrak m_\lambda$.
\end{pro}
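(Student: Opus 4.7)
The plan is to apply Lemma~\ref{lem:Hopf fibration} after descending the orbit $\mathrm{Ad}(K)\lambda$ to a complex submanifold of $\mathbf{CP}^n$, where $n + 1 = \dim_{\mathbf C} \mathfrak m$. Since $(G,K)$ is Hermitian, the complex structure $J$ on $\mathfrak m$ can be written $J = \mathrm{ad}(Z_0)$ for a central element $Z_0 \in \mathfrak k$, so the Hopf $S^1$-action $v \mapsto (\cos\theta)v + (\sin\theta)Jv$ coincides with the one-parameter subgroup $\mathrm{Ad}(\exp(\theta Z_0)) \subset \mathrm{Ad}(K)$. After rescaling $\lambda$ to unit norm, the orbit $\mathrm{Ad}(K)\lambda$ is therefore $S^1$-invariant and descends to a submanifold $M := \pi(\mathrm{Ad}(K)\lambda) \subset \mathbf{CP}^n$.

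Next I would verify that $M$ is a complex submanifold. By Lemma~\ref{lem:complex structure}, $J\mathfrak a = \sum_{i=1}^p \mathfrak m_{2e_i}$; comparing dimensions and using the transitivity of the Weyl group on long roots yields $\dim \mathfrak m_{2e_i} = 1$ for each $i$. Since $2e_i \in R_+^\Delta$ for $i \ge 2$, the component of $J\lambda \in J\mathfrak a$ in $\mathfrak m_{2e_i}$ ($i \ge 2$) must vanish (otherwise $J\lambda$ could not lie in $T_\lambda(\mathrm{Ad}(K)\lambda)$ by (\ref{eq:tangent space})). Hence $\mathbf R J\lambda = \mathfrak m_{2e_1} = \mathfrak m_\lambda$, and the horizontal tangent space satisfies
\[
T_\lambda(\mathrm{Ad}(K)\lambda) \ominus \mathfrak m_{2e_1}
= \mathfrak m_{e_1} \oplus \sum_{j \ge 2}\bigl(\mathfrak m_{e_1+e_j} \oplus \mathfrak m_{e_1-e_j}\bigr).
\]
This space is $J$-invariant, since $J\mathfrak m_{e_1} = \mathfrak m_{e_1}$ and $J\mathfrak m_{e_1+e_j} = \mathfrak m_{e_1-e_j}$ by Lemma~\ref{lem:complex structure}, so by $\mathrm{Ad}(K)$-homogeneity $M$ is a complex submanifold of $\mathbf{CP}^n$.

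Third, I would rule out the possibility that $M$ is a projective subspace. If it were, then $\pi^{-1}(M) = \mathrm{Ad}(K)\lambda$ would be a great sphere of $S$, hence totally geodesic, giving $\mathcal N_\lambda = T_\lambda(\mathrm{Ad}(K)\lambda)$; but we already have $\mathcal N_\lambda \subset \mathfrak m_{2e_1} + \mathfrak m_{e_1}$, whereas for $p \ge 2$ the tangent space strictly contains this (for example, the non-zero summand $\mathfrak m_{e_1+e_2}$ appears), a contradiction. Lemma~\ref{lem:Hopf fibration} then asserts that the rank of the Gauss mapping of $\mathrm{Ad}(K)\lambda$ equals $2\dim_{\mathbf C} M$, and since $\dim \mathrm{Ad}(K)\lambda = 2\dim_{\mathbf C} M + 1$ from the Hopf fiber structure, one concludes $\dim \mathcal N_\lambda = 1 = \dim \mathfrak m_\lambda$. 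Combined with the already known inclusion $\mathfrak m_\lambda \subset \mathcal N_\lambda$, this yields $\mathcal N_\lambda = \mathfrak m_\lambda$.

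The principal difficulty will be the second step, in which the Hermitian structure is essential: one must both pin down $J\lambda$ within $\mathfrak m_{2e_1}$ and track the $J$-action across the entire tangent-space decomposition via Lemma~\ref{lem:complex structure}, so that $M$ inherits a complex structure from $\mathbf{CP}^n$.
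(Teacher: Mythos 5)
Your proposal is correct and follows essentially the same route as the paper: project the orbit through the Hopf fibration, use Lemma~\ref{lem:complex structure} to see that the image is a complex submanifold of $\mathbf{C}P^n$ which is not a projective subspace, and then apply Lemma~\ref{lem:Hopf fibration} to conclude that the index of relative nullity is $1$, hence $\mathcal N_\lambda = \mathfrak m_{2e_1}$. The extra details you supply (identifying $J\lambda$ with $\mathfrak m_{2e_1}$, checking $J$-invariance of the horizontal space, and ruling out the projective-subspace case) are points the paper leaves implicit, and your verifications of them are sound.
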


\begin{proof}
Without loss of generality we can put $\lambda = 2e_1$,
and we consider the orbit $\mathrm{Ad}(K)\lambda$ through $\lambda$.
The tangent space of $\mathrm{Ad}(K)\lambda$ at $\lambda$ is given by
$$
T_\lambda(\mathrm{Ad}(K)\lambda) = \sum_{\mu \in R_+ - R_+^\Delta} \mathfrak m_\mu
= \mathfrak m_{2e_1} + \mathfrak m_{e_1}
+ \sum_{2 \leq i \leq p} \mathfrak m_{e_1 \pm e_i}.
$$
We denote by $\pi : S \longrightarrow \mathbf CP^n$
the Hopf fibration from the hypersphere $S$ in $\mathfrak m$
to the complex projective space.
Then the image $\pi(\mathrm{Ad}(K)\lambda)$ of the orbit $\mathrm{Ad}(K)\lambda$
is a submanifold of $\mathbf CP^n$, and its tangent space at $\pi(\lambda)$ is given by
$$
T_{\pi(\lambda)}(\pi(\mathrm{Ad}(K)\lambda))
= \mathfrak m_{e_1} + \sum_{2 \leq i \leq p} \mathfrak m_{e_1 \pm e_i}.
$$
Therefore from Lemma~\ref{lem:complex structure},
$\pi(\mathrm{Ad}(K)\lambda)$ is a complex submanifold of $\mathbf CP^n$.
Obviously $\pi(\mathrm{Ad}(K)\lambda)$ is not a complex projective subspace
when $p \geq 2$.
Thus from Lemma~\ref{lem:Hopf fibration}
the index of the relative nullity of $\mathrm{Ad}(K)\lambda \subset S$
is equal to $1$.
Hence 
$\mathcal N_\lambda = \mathfrak m_{2e_1}$.
\end{proof}

\begin{pro} \label{pro:quaternion Grassmannian manifold}
In the case of
$(G, K) = (Sp(2p+n), Sp(p) \times Sp(p+n))\; (p \geq 2, n \geq 1)$,
the space of relative nullity of the orbit through a long root $\lambda \in R$
is given by $\mathcal N_\lambda = \mathfrak m_\lambda$.
\end{pro}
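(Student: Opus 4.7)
The plan is to mirror the proof of Proposition~\ref{Hermitian symmetric space}, substituting the quaternionic Hopf fibration for the complex one. Identify $\mathfrak m \cong M(p,p+n;\mathbf H) \cong \mathbf H^N$ with $N = p(p+n)$ as a right $\mathbf H$-module. The scalar subgroup $\{I_p\}\times\{qI_{p+n}:q\in Sp(1)\}\subset K$ acts on $\mathfrak m$ by $X\mapsto Xq^{-1}$, commutes with the full $K$-action, and realizes the quaternionic Hopf fibration $\pi\colon S\to\mathbf{HP}^{N-1}$ from the hypersphere $S$ of $\mathfrak m$. Because this $Sp(1)$ sits inside $K$, the orbit $\mathrm{Ad}(K)\lambda$ is saturated by Hopf fibres, so $\pi(\mathrm{Ad}(K)\lambda)$ is a well-defined submanifold of $\mathbf{HP}^{N-1}$. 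Moreover, the $Sp(1)$-invariance of the orbit forces the horizontal part of each tangent space (viewed inside $\mathbf H^N$) to be closed under right multiplication by $Sp(1)$; combined with closure under real scalars this gives closure under all of $\mathbf H$, so $\pi(\mathrm{Ad}(K)\lambda)$ is a quaternionic submanifold.

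After normalizing $\lambda=2e_1$, the $Sp(1)$-orbit of $\lambda$ sweeps out the unit sphere in the quaternionic line $\mathbf R\lambda+\mathfrak m_{2e_1}$, and since $\dim\mathfrak m_{2e_1}=3$ the vertical space at $\lambda$ coincides with $\mathfrak m_\lambda$. Using (\ref{eq:tangent space}) and the root multiplicities for $BC_p$ of $(Sp(2p+n),Sp(p)\times Sp(p+n))$, the quaternionic dimension of $\pi(\mathrm{Ad}(K)\lambda)$ equals $(4n+8(p-1))/4=n+2(p-1)$, which is strictly less than $p(p+n)-1$ whenever $p\ge 2$ and $n\ge 1$. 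Moreover $\dim\mathcal N_\lambda\le\dim(\mathfrak m_{2e_1}+\mathfrak m_{e_1})=3+4n$, so the rank of the Gauss mapping of the orbit is at least $8(p-1)>0$; hence the orbit is not a great sphere in $S$, and $\pi(\mathrm{Ad}(K)\lambda)$ is not a quaternionic projective subspace of $\mathbf{HP}^{N-1}$.

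A quaternionic analog of Lemma~\ref{lem:Hopf fibration}, proved by the same strategy as the complex case but with the three-dimensional $Sp(1)$-fibres, then says that the Gauss mapping of $\mathrm{Ad}(K)\lambda\subset S$ has rank equal to four times the quaternionic dimension of $\pi(\mathrm{Ad}(K)\lambda)$, namely $4(n+2p-2)$. Since $\dim\mathrm{Ad}(K)\lambda=4n+8p-5$, the index of relative nullity is exactly $3=\dim\mathfrak m_\lambda$; combined with the earlier inclusion $\mathfrak m_\lambda\subset\mathcal N_\lambda$ this forces $\mathcal N_\lambda=\mathfrak m_\lambda$. The main technical obstacle is to establish this quaternionic Hopf fibration lemma; its proof parallels that of Lemma~\ref{lem:Hopf fibration}, but one has to carry along the three-dimensional $Sp(1)$-structure of the fibres when relating second fundamental forms on $\pi^{-1}(M)\subset S^{4N-1}$ to those of $M\subset\mathbf{HP}^{N-1}$.
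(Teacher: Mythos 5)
Your strategy is genuinely different from the paper's: the paper proves this proposition by a short, explicit matrix computation, writing out $\mathfrak m_{e_1}$ and $\mathfrak k_{e_2}$ in quaternionic matrix form and showing that $[\mathfrak k_{e_2},X]=\{0\}$ forces $X=0$ for $X\in\mathfrak m_{e_1}$, which (since $\mathcal N_\lambda$ is $\mathrm{ad}(\mathfrak z_K^\lambda)$-invariant and squeezed between $\mathfrak m_{2e_1}$ and $\mathfrak m_{2e_1}+\mathfrak m_{e_1}$) kills the $\mathfrak m_{e_1}$-component of $\mathcal N_\lambda$. Your dimension bookkeeping ($l=8p+4n-5$, the bound $\dim\mathcal N_\lambda\le 3+4n$, and the target codimension $3$) is consistent with the paper's table, but the route you propose has a genuine obstruction, not just a missing lemma.

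The crux is the claimed ``quaternionic analog of Lemma~\ref{lem:Hopf fibration},'' and it cannot be obtained ``by the same strategy.'' First, a repairable inaccuracy: the scalar subgroup $\{(I_p,qI_{p+n})\}$ does \emph{not} commute with $K$ (right multiplication by $qI$ does not commute with right multiplication by a general $b\in Sp(p+n)$, since $b^{-1}q\ne qb^{-1}$); it is merely a subgroup of $K$, which still gives saturation of the orbit by Hopf fibres. More seriously, the same non-commutativity shows that $K$ is not contained in $Sp(N)\cdot Sp(1)$ for the quaternionic structure defined by right scalar multiplication, so $K$ does not act on $\mathbf HP^{N-1}$ by quaternionic isometries, and your homogeneity argument that every tangent space of $\pi(\mathrm{Ad}(K)\lambda)$ is $\mathbf H$-invariant does not go through. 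Indeed it \emph{must} fail: by Gray's theorem, every quaternionic submanifold of a quaternion-K\"ahler manifold is totally geodesic, so a quaternionic submanifold of $\mathbf HP^{N-1}$ is a linear subspace $\mathbf HP^k$, whose Hopf preimage is a great sphere with $r=0$. Thus the intended quaternionic analog of Lemma~\ref{lem:Hopf fibration} is vacuous for quaternionic submanifolds, in sharp contrast with the complex case, where the existence of abundant non-linear compact complex submanifolds of $\mathbf CP^n$ is exactly what makes the lemma useful. Either $\pi(\mathrm{Ad}(K)\lambda)$ is not a quaternionic submanifold (so your lemma would not apply to it), or it is one and then the orbit would be a great sphere, contradicting $r\ge 8(p-1)>0$. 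To salvage the conclusion one has to control the relative nullity by a direct argument on the second fundamental form, which is what the paper's bracket computation accomplishes.
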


\begin{proof}
We shall give the restricted root space decomposition of
$(G, K) = (Sp(2p+n), Sp(p) \times Sp(p+n))$.
We express $\mathfrak g$ as
$$
\mathfrak g = \mathfrak{sp}(2p+n)
= \{ X \in M_{2p+n}(\mathbf H) \ | \ ^t \bar{X} + X = 0 \}.
$$
We define an involutive automorphism $\theta$ on $\mathfrak g$ by
$$
\theta : \mathfrak g \longrightarrow \mathfrak g;
X \longmapsto \left[ \begin{array}{cc} I_p & \\ & -I_{p+n} \end{array} \right] X
\left[ \begin{array}{cc} I_p & \\ & -I_{p+n} \end{array} \right],
$$
where $I_r$ denotes the $r \times r$ identity matrix.
Then the eigenspaces $\mathfrak k$ and $\mathfrak m$ of $\theta$
associated to eigenvalues $\pm 1$ are given by
\begin{eqnarray*}
\mathfrak k
&=& \left\{ \left[ \begin{array}{cc} X & \\ & Y \end{array} \right] \ \bigg| \
X \in \mathfrak{sp}(p),\ Y \in \mathfrak{sp}(p+n) \right\}, \\
\mathfrak m
&=& \left\{ \left[ \begin{array}{cc} & X \\ -^t\bar{X} & \end{array} \right] \ \bigg| \
X \in M_{p,p+n}(\mathbf H) \right\}.
\end{eqnarray*}
We take a maximal abelian subspace $\mathfrak a$ of $\mathfrak m$ by
$$
\mathfrak a = \left\{ \left[
\begin{array}{c|c|c} & T & \\ \hline -T & & \\ \hline & & \end{array}
\right] \ \Bigg| \
T = t_1 E_{11}+ \cdots + t_p E_{pp},\ t_i \in \mathbf{R} \right\},
$$
where $E_{ij}$ denotes a matrix whose $(i,j)$ element is $1$ and all other elements
are $0$.
We define $e_i \in \mathfrak a$ by
$$
e_i = \left[
\begin{array}{c|c|c} & E_{ii} & \\ \hline -E_{ii} & & \\ \hline & & \end{array}
\right].
$$
Then the restricted root system of $(\mathfrak g, \mathfrak k)$
is of type $BC_p$.
We note that, when $n=0$, the restricted root system is of type $C_p$.

In the case of type $BC$,
the restricted root spaces $\mathfrak k_{e_i}$ and $\mathfrak m_{e_i}$
which correspond to $e_i$ are given by
\begin{eqnarray*}
\mathfrak m_{e_i}
&=& \left\{ \sum_{j = 1}^{n}(x_j E_{i,2p+j} - \bar x_j E_{2p+j,i}) \
\bigg| \ x_j \in \mathbf H \right\}, \\
\\
\mathfrak k_{e_i}
&=& \left\{ \sum_{j = 1}^{n}(y_j E_{p+i,2p+j} - \bar y_j E_{2p+j,p+i}) \
\bigg| \ y_j \in \mathbf H \right\}.
\end{eqnarray*}

In order to prove the proposition, we will show that
$\mathcal N_\lambda$ does not contain $\mathfrak m_{e_1}$-component.
We take $X \in \mathfrak m_{e_1}$ arbitrarily.
Then
$[\mathfrak k_{e_2}, X] \subset \mathfrak m_{e_1+e_2} + \mathfrak m_{e_1-e_2}$.
Since $\mathcal N_\lambda$ is invariant under $\mathrm{ad}(\mathfrak z_K^\lambda)$,
we have that if $X \in \mathcal N_\lambda$
then $[\mathfrak k_{e_2}, X] \subset \mathcal N_\lambda
\subset \mathfrak m_{2e_1} + \mathfrak m_{e_1}$.
Therefore, if $X \in \mathcal N_\lambda$ then $[\mathfrak k_{e_2}, X] = \{ 0 \}$.
We can express
$X = \sum_{j = 1}^{n}(x_j E_{1,2p+j} - \bar x_j E_{2p+j,1}) \in \mathfrak m_{e_1}$.
Then
$$
[\mathfrak k_{e_2}, X]
= \left\{ \left( \sum_{j = 1}^{n} x_j \bar y_j \right) E_{1,p+2}
- \left( \sum_{j = 1}^{n} y_j \bar x_j \right) E_{p+2,1} \
\bigg| \ y_j \in \mathbf H \right\}.
$$
This yields $X = 0$.
Thus $\mathcal N_\lambda$ does not contain $\mathfrak m_{e_1}$-component.
Hence $\mathcal N_\lambda = \mathfrak m_\lambda$.
\end{proof}

\subsection{Tangentially non-degenerate orbits}

In the above subsection we have proved that all orbits stated
in Theorem~\ref{thm:main} are tangentially degenerate.
In this subsection,
we shall show that other orbits are not tangentially degenerate.

\begin{pro}
Let $(G,K)$ be a Hermitian symmetric pair.
(Then the restricted root system of $(G,K)$ is of type $C$ or $BC$.)
The orbit $\mathrm{Ad}(K)\lambda$
through $\lambda =e_1+e_2$ is not tangentially degenerate.
\end{pro}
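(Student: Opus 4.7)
The plan is to apply Proposition~\ref{pro:tangentially degenerate} and Lemma~\ref{lem:4-8}. Since $2(e_1+e_2)\notin R$ in types $C$ and $BC$, we have $\sum_{\mu\mathrel{/\!/}\lambda}\mathfrak m_\mu=\mathfrak m_{e_1+e_2}$, so it suffices to prove that $\mathfrak m_{e_1+e_2}$ contains no non-zero $\mathrm{ad}(\mathfrak z^\lambda_K)$-invariant subspace. With the standard ordering, $e_1-e_2$ is a positive root orthogonal to $\lambda$ and hence lies in $R^\Delta_+$, so by Lemma~\ref{lem:4-8} any such invariant $V$ must satisfy $[\mathfrak k_{e_1-e_2},V]=\{0\}$. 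I therefore reduce the problem to the concrete claim: for every non-zero $X\in\mathfrak m_{e_1+e_2}$ there exists $Y\in\mathfrak k_{e_1-e_2}$ with $[Y,X]\neq 0$.

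To construct such a $Y$ I use the complex structure $J$ on $\mathfrak m$. By Lemma~\ref{lem:complex structure}, $J$ restricts to a linear isomorphism $\mathfrak m_{e_1+e_2}\to\mathfrak m_{e_1-e_2}$, and by Lemma~\ref{lem:basis}(2), $\mathrm{ad}(e_1-e_2)$ sends $\mathfrak m_{e_1-e_2}$ isomorphically onto $\mathfrak k_{e_1-e_2}$. Hence $Y:=[e_1-e_2,JX]$ is a non-zero element of $\mathfrak k_{e_1-e_2}$ whenever $X\neq 0$. Because $\langle e_1-e_2,e_1+e_2\rangle=0$, we have $[e_1-e_2,X]=0$, and the Jacobi identity then collapses the bracket in question to
$$
[Y,X]=[e_1-e_2,[JX,X]],
$$
with $[JX,X]\in\mathfrak k_{2e_1}+\mathfrak k_{2e_2}$.

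The key remaining step, and the one place where the Hermitian hypothesis is indispensable, is to show $[JX,X]\neq 0$. The plan is to write $J=\mathrm{ad}(Z)|_{\mathfrak m}$ for the central element $Z\in Z(\mathfrak k)$ defining the Hermitian structure, and then to detect the non-vanishing by pairing with $Z$: using the $\mathrm{ad}$-invariance of $\langle\,,\,\rangle$ together with $J^2=-\mathrm{id}$, a direct computation gives
$$
\langle [JX,X],Z\rangle
=-\langle X,[JX,Z]\rangle
=\langle X,J^2X\rangle
=-\|X\|^2,
$$
so $[JX,X]\neq 0$. Since $\mathrm{ad}(e_1-e_2)$ is a bijection from $\mathfrak k_{2e_i}$ onto $\mathfrak m_{2e_i}$ for each $i=1,2$ by Lemma~\ref{lem:basis}(2), this yields $[e_1-e_2,[JX,X]]\neq 0$, hence $[Y,X]\neq 0$, as desired. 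The main obstacle in the argument is precisely this non-vanishing of $[JX,X]$; without the existence of the central element $Z$ generating $J$ there is no obvious reason for it to hold.
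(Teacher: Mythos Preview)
Your argument is correct. The reduction is identical to the paper's: both observe that $\sum_{\mu\mathrel{/\!/}\lambda}\mathfrak m_\mu=\mathfrak m_{e_1+e_2}$ and that it suffices, via Lemma~\ref{lem:4-8} with $\nu=e_1-e_2\in R^\Delta_+$, to show that $[\mathfrak k_{e_1-e_2},X]=\{0\}$ forces $X=0$. The difference lies in how this last implication is proved.

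The paper exploits the $K$-equivariance of $J$ (since $J=\mathrm{ad}(Z)$ with $Z$ central in $\mathfrak k$, one has $J[\,Y,X\,]=[\,Y,JX\,]$ for $Y\in\mathfrak k$) to translate the hypothesis into $[\mathfrak k_{e_1-e_2},JX]=\{0\}$; then, using $[\mathfrak a,\mathfrak k_{e_1-e_2}]=\mathfrak m_{e_1-e_2}$ and ad-invariance of the inner product, it concludes $JX\perp\mathfrak m_{e_1-e_2}$, while $JX\in\mathfrak m_{e_1-e_2}$ by Lemma~\ref{lem:complex structure}, so $JX=0$. This is a three-line contrapositive.

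You instead construct an explicit witness $Y=[e_1-e_2,JX]\in\mathfrak k_{e_1-e_2}$ and verify $[Y,X]\neq 0$ directly: the Jacobi identity reduces this to $[e_1-e_2,[JX,X]]\neq 0$, and you establish $[JX,X]\neq 0$ by pairing with $Z$ to get $\langle[JX,X],Z\rangle=-\|X\|^2$, then use injectivity of $\mathrm{ad}(e_1-e_2)$ on $\mathfrak k_{2e_1}\oplus\mathfrak k_{2e_2}$. Your route is more constructive and computational; the paper's is shorter because the equivariance of $J$ absorbs what you do with the Jacobi identity, and the orthogonality step replaces your pairing-with-$Z$ argument. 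Both arguments rest on the same two ingredients, namely Lemma~\ref{lem:complex structure} and the existence of the central element $Z$ generating $J$.
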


\begin{proof}
It is sufficient to prove that if $X \in \mathfrak{m}_{e_1+e_2}$
satisfies $[\mathfrak{k}_{e_1-e_2},X]=\{0\}$,
then $X=0$.
From the assumption,
$$
0=J[\mathfrak{k}_{e_1-e_2},X]=[\mathfrak{k}_{e_1-e_2},JX].
$$
Therefore we have
$$
0=\langle\mathfrak{a},[\mathfrak{k}_{e_1-e_2},JX]\rangle 
=\langle [\mathfrak{a},\mathfrak{k}_{e_1-e_2}],JX\rangle 
=\langle \mathfrak{m}_{e_1-e_2},JX\rangle.
$$
From Lemma~\ref{lem:complex structure} we have $JX\in \mathfrak{m}_{e_1-e_2}$.
This implies $JX=0$, hence $X=0$.
\end{proof}

In the case of $(G,K)=(F_4,SU(2)\cdot Sp(3))$,
$(G,K)$ is a compact symmetric pair which corresponds to a normal real form.
In this case, we shall show that the orbit through a short root
is not tangentially degenerate (Proposition~\ref{pro:normal real form}).

For this purpose, we shall recall some definitions.
A real form $\mathfrak g$ of a semisimple Lie algebra $\mathfrak l$ over $\mathbf C$
is called {\it normal} if in each Cartan decomposition
$\mathfrak g = \mathfrak k + \mathfrak m$ the space $\mathfrak m$ contains
a maximal abelian subalgebra of $\mathfrak g$.
It is known that
there exists a normal real form for each semisimple Lie algebra over $\mathbf C$,
moreover that is unique up to isomorphism (\cite[Ch.~IX, Theorem~5.10]{Helgason})D

A compact symmetric pair $(G, K)$ is called
{\it compact symmetric pair corresponds to a normal real form}
if the dual $(\mathfrak{g}^*,\mathfrak{k})$ of the orthogonal symmetric Lie algebra
$(\mathfrak{g},\mathfrak{k})$ of $(G,K)$ is a normal real form of the complexification
$\mathfrak g^{\mathbf C}$ of $\mathfrak g$.

\begin{pro} \label{pro:normal real form}
Let $(G,K)$ be a compact symmetric pair which corresponds to a normal real form
with a restricted root system of type $B$, $C$, or $F_4$.
Then the orbit through a short root is not tangentially degenerate.
\end{pro}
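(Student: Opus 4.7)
The plan is to apply Proposition~\ref{pro:tangentially degenerate}: the orbit through a short root $\alpha$ is tangentially degenerate if and only if $\sum_{\mu \mathrel{/\!/} \alpha} \mathfrak{m}_\mu$ contains a non-zero subspace stable under $\mathrm{ad}(\mathfrak{z}_K^\alpha)$. The first observation is that in the restricted root systems of type $B$, $C$, or $F_4$ the only positive root parallel to a short root $\alpha$ is $\alpha$ itself, so this ambient space shrinks to the single root space $\mathfrak{m}_\alpha$. It therefore suffices to show that $\mathfrak{m}_\alpha$ has no non-zero $\mathrm{ad}(\mathfrak{z}_K^\alpha)$-invariant subspace.

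Next I would invoke the normality assumption. Since $(\mathfrak{g}^*, \mathfrak{k})$ is a normal real form, $\mathfrak{a}^*$ is a Cartan subalgebra of $\mathfrak{g}^*$; complexifying and passing to the compact dual gives $\mathfrak{t} = \mathfrak{a}$ and $\tilde R = R$, so every complex root space $\mathfrak{g}^{\mathbf C}_\nu$ is one-dimensional and consequently each $\mathfrak{k}_\nu$ and $\mathfrak{m}_\nu$ is one-dimensional over $\mathbf{R}$. In particular the only candidate for a non-zero invariant subspace of $\mathfrak{m}_\alpha$ is $\mathfrak{m}_\alpha$ itself, and by Lemma~\ref{lem:4-8} I only need to exhibit a single $\beta \in R_+^\Delta$ with $[\mathfrak{k}_\beta, \mathfrak{m}_\alpha] \neq 0$.

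The combinatorial input is already provided by the proof of Proposition~\ref{pro:4-6}: for every short root $\alpha$ in types $B$, $C$, $F_4$ an explicit $\beta \perp \alpha$ with $\alpha + \beta \in R$ is constructed (the $F_4$ case reducing to the embedded $B_2$ subsystem via Weyl-group conjugacy). To turn this into non-vanishing of the bracket I would pick nonzero $E_\nu \in \mathfrak{g}^{\mathbf C}_\nu$ for $\nu = \alpha, \beta$ and form real bases $K_\beta = E_\beta + \theta(E_\beta) \in \mathfrak{k}_\beta$ and $M_\alpha = E_\alpha - \theta(E_\alpha) \in \mathfrak{m}_\alpha$. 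Expanding $[K_\beta, M_\alpha]$ and projecting onto the one-dimensional space $\mathfrak{g}^{\mathbf C}_{\alpha+\beta}$, only the summand $[E_\beta, E_\alpha]$ survives, and it is nonzero because $\alpha + \beta$ is a root. Since $\mathfrak{g}^{\mathbf C}_{\alpha+\beta}$ is disjoint from $\mathfrak{g}^{\mathbf C}_\alpha \oplus \mathfrak{g}^{\mathbf C}_{-\alpha}$, the bracket does not lie in $\mathfrak{m}_\alpha$, so $\mathfrak{m}_\alpha$ fails to be invariant.

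The main subtlety is that Proposition~\ref{pro:tangentially degenerate} asks for non-existence of \emph{any} non-zero invariant subspace of $\sum_{\mu \mathrel{/\!/} \alpha} \mathfrak{m}_\mu$, not merely the non-invariance of this whole space; the decisive place at which normality enters is in collapsing the problem to the one-dimensional space $\mathfrak{m}_\alpha$, where those two conditions coincide. Once this reduction is in place the remaining point is that the real bracket $[K_\beta, M_\alpha]$ cannot accidentally vanish, which is forced by the independence of distinct complex root spaces in $\mathfrak{g}^{\mathbf C}$ and the non-vanishing of the single structure constant $[E_\beta, E_\alpha]$.
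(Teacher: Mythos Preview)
Your proposal is correct and follows essentially the same line as the paper's proof: both use normality to reduce each restricted root space to a one-dimensional real line, invoke the root $\beta\perp\alpha$ with $\alpha\pm\beta\in R$ supplied by the proof of Proposition~\ref{pro:4-6}, and then check that $[\mathfrak{k}_\beta,\mathfrak{m}_\alpha]\neq\{0\}$ so that Lemma~\ref{lem:4-8} (together with Proposition~\ref{pro:tangentially degenerate}) rules out any non-zero invariant subspace. The only cosmetic differences are that the paper works directly with the standard real basis $F_\alpha,G_\alpha$ and displays the bracket as $\mathbf{R}(N_{\alpha,\beta}G_{\alpha+\beta}-N_{-\alpha,\beta}G_{\alpha-\beta})$, whereas you phrase the same computation via $E_\beta+\theta(E_\beta)$, $E_\alpha-\theta(E_\alpha)$ and a projection onto $\mathfrak{g}^{\mathbf C}_{\alpha+\beta}$; and you make explicit the (tacitly used) fact that $2\alpha\notin R$ in types $B$, $C$, $F_4$, so that $\sum_{\mu\mathrel{/\!/}\alpha}\mathfrak{m}_\mu=\mathfrak{m}_\alpha$.
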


\begin{proof}
Since $(G,K)$ is a compact symmetric pair which corresponds to a normal real form,
$\mathfrak{k}$ and $\mathfrak{m}$ can be expressed as
$$
\mathfrak{k} = \sum_{\alpha \in R_+} \mathbf{R} F_\alpha, \quad 
\mathfrak{m} = \mathfrak{t} \oplus \sum_{\alpha \in R_+} \mathbf{R} G_\alpha, \quad 
\mathfrak{k}_\alpha = \mathbf{R} F_\alpha, \quad 
\mathfrak{m}_\alpha = \mathbf{R} G_\alpha,
$$
where $F_\alpha = (E_\alpha - E_{-\alpha}) / \sqrt{2}$
and $G_\alpha = \sqrt{-1}(E_\alpha + E_{-\alpha}) / \sqrt{2}$.
Here $E_\alpha \in \mathfrak{g}_\alpha$ satisfies that,
for $\alpha, \beta \in R$, if $\alpha + \beta \in R$ then
$[E_\alpha, E_\beta] = N_{\alpha, \beta} E_{\alpha + \beta}$
and $N_{\alpha, \beta}$ is non-zero real number which satisfies
$N_{\alpha, \beta} = -N_{-\alpha, -\beta}$.

When $\alpha$ is a short root,
as we showed in the proof of Proposition~\ref{pro:4-6},
there exists $\beta \in R_+$ such that $\alpha \perp \beta$ and $\alpha \pm \beta \in R$.
Then we have
$$
[\mathfrak{k}_\beta, \mathfrak{m}_\alpha]
=\mathbf{R}
(N_{\alpha, \beta} G_{\alpha + \beta} - N_{-\alpha, \beta} G_{\alpha - \beta})
\not=\{0\}.
$$
Thus, from Lemma~\ref{lem:4-8},
the orbit $\mathrm{Ad}(K)\alpha$ through $\alpha$ is not tangentially degenerate.
\end{proof}

From Proposition~\ref{pro:normal real form},
in the case of $(G,K)=(F_4,SU(2)\cdot Sp(3))$,
the orbit through a short root is not tangentially degenerate.

\begin{pro}
In the case of $(G,K) = (SO(2p+n), S(O(p)\times O(p+n)))\;(p \geq 2, n \geq 1)$,
the orbit $\mathrm{Ad}(K)\lambda$ through a short root $\lambda$
is not tangentially degenerate.
\end{pro}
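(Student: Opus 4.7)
The plan follows the same pattern as Proposition~\ref{pro:quaternion Grassmannian manifold} (the quaternion Grassmannian case), applying the criterion of Proposition~\ref{pro:tangentially degenerate} via Lemma~\ref{lem:4-8}. Since $n \geq 1$, the restricted root system is of type $B_p$, and by Weyl group conjugacy I may assume $\lambda = e_1$. The only restricted roots parallel to $\lambda$ are $\pm e_1$, so $\sum_{\mu \mathrel{/\!/} \lambda}\mathfrak{m}_\mu = \mathfrak{m}_{e_1}$, and the goal is to show that no non-zero subspace of $\mathfrak{m}_{e_1}$ is invariant under $\mathrm{ad}(\mathfrak{z}_K^{e_1})$. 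Note that with $\Delta = \{e_1 - e_2\}$ we have $e_2 \in R_+^\Delta$, so Lemma~\ref{lem:4-8} reduces the problem to showing that the only $X \in \mathfrak{m}_{e_1}$ with $[\mathfrak{k}_{e_2}, X] = \{0\}$ is $X = 0$.

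I would first set up the explicit matrix model: realize $\mathfrak{g} = \mathfrak{so}(2p+n)$ with the involution $\theta = \mathrm{Ad}(\mathrm{diag}(I_p, -I_{p+n}))$ and take $\mathfrak{a}$ spanned by $H_i = E_{i,p+i} - E_{p+i,i}$ for $1 \leq i \leq p$. A direct eigenspace calculation under $\mathrm{ad}(H_k)^2$ identifies the relevant restricted root spaces as
$$
\mathfrak{m}_{e_i} = \mathrm{span}_{\mathbf{R}}\{E_{i,2p+\alpha} - E_{2p+\alpha,i} \mid 1 \leq \alpha \leq n\}, \quad \mathfrak{k}_{e_i} = \mathrm{span}_{\mathbf{R}}\{E_{p+i,2p+\alpha} - E_{2p+\alpha,p+i} \mid 1 \leq \alpha \leq n\},
$$
each of real dimension $n$.

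The key step is then a short bracket computation. Setting $X_\alpha = E_{1,2p+\alpha} - E_{2p+\alpha,1} \in \mathfrak{m}_{e_1}$ and $Y_\beta = E_{p+2,2p+\beta} - E_{2p+\beta,p+2} \in \mathfrak{k}_{e_2}$, the only non-trivial matrix-unit contractions give
$$
[X_\alpha, Y_\beta] = -\delta_{\alpha\beta}\bigl(E_{1,p+2} - E_{p+2,1}\bigr),
$$
so for $X = \sum_\alpha x_\alpha X_\alpha \in \mathfrak{m}_{e_1}$ the vanishing $[X, Y_\beta] = 0$ for every $\beta$ forces $x_\beta = 0$ for all $\beta$, and hence $X = 0$. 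Combining this non-degeneracy with Lemma~\ref{lem:4-8} (any invariant $V \subset \mathfrak{m}_{e_1}$ must satisfy $[\mathfrak{k}_{e_2}, V] = \{0\}$, since the image lies in $\mathfrak{m}_{e_1+e_2} \oplus \mathfrak{m}_{e_1-e_2}$ whose intersection with $\mathfrak{m}_{e_1}$ is trivial) shows $V = \{0\}$, and Proposition~\ref{pro:tangentially degenerate} concludes.

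No step presents a serious obstacle: the argument is entirely parallel to Proposition~\ref{pro:quaternion Grassmannian manifold}, with the quaternion entries $x_j \in \mathbf{H}$ replaced by real entries along the $n$ extra coordinates. The only care needed is in fixing the matrix conventions so that the identification of $\mathfrak{m}_{e_1}$ and $\mathfrak{k}_{e_2}$ above and the non-degeneracy of the resulting pairing $\mathfrak{k}_{e_2} \times \mathfrak{m}_{e_1} \to \mathfrak{m} \cap \mathrm{span}\bigl(E_{1,p+2}, E_{p+2,1}\bigr)$ in the index $\alpha$ are immediate.
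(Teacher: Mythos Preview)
Your proof is correct and essentially parallels the paper's: both set $\lambda = e_1$, write down the matrix model for $\mathfrak{m}_{e_1}$ and $\mathfrak{k}_{e_2}$, compute their bracket, and conclude via Lemma~\ref{lem:4-8}. The paper takes a slightly different final step---it first uses the irreducibility of $\mathfrak{k}_0 = \mathfrak{o}(n)$ acting on $\mathfrak{m}_{e_1} \cong \mathbf{R}^n$ to force any invariant $V$ to be all of $\mathfrak{m}_{e_1}$ and then merely checks $[\mathfrak{k}_{e_2}, \mathfrak{m}_{e_1}] \neq \{0\}$---whereas you (following the template of Proposition~\ref{pro:quaternion Grassmannian manifold}) establish the sharper non-degeneracy of the pairing in the index $\alpha$, which bypasses the irreducibility observation.
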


\begin{proof}
In this case the restricted root system $R$ of $(G,K)$ is of type $B_p$, that is
$R = \{\pm e_i \mid 1 \leq i \leq p \} \cup \{\pm e_i\pm e_j \mid 1\leq i<j\leq p\}$.
Without loss of generality we can put $\lambda =e_1$.
The action of $\mathfrak{k}_0=\mathfrak{o}(n)$
on $\mathfrak{m}_\lambda =\mathbf{R}^n$ is irreducible,
thus $\mathfrak{m}_\lambda$ is the only non-zero subspace
of $\mathfrak{m}_\lambda$ invariant under $\mathfrak{k}_0$.
Restricted root spaces $\mathfrak{m}_{e_i}, \mathfrak{k}_{e_i}\, (1 \leq i \leq p)$
are given by
\begin{eqnarray*}
&& \mathfrak{m}_{e_i} = \left\{\left.
\left(
\begin{array}{c|c|c}
 & & X \\ \hline
 & &   \\ \hline
-{}^tX & &
\end{array}
\right)\;\right|\;
X=x_1E_{i1}+\cdots +x_nE_{in},\; x_j \in \mathbf{R}
\right\},\\
&& \mathfrak{k}_{e_i} = \left\{\left.
\left(
\begin{array}{c|c|c}
 & &  \\ \hline
 & & -X  \\ \hline
 & {}^tX &
\end{array}
\right)\;\right|\;
X=x_1E_{i1}+\cdots +x_nE_{in},\; x_j \in \mathbf{R}
\right\}.
\end{eqnarray*}
Therefore, when $i\geq 2$, we have that $e_i$ is perpendicular to $e_1$ and
$$
[\mathfrak{k}_{e_i},\mathfrak{m}_{e_1}] = \mathbf{R}
\left(
\begin{array}{c|c|c}
 & -E_{1i} & \\ \hline
E_{i1} & & \\ \hline
 & & 
\end{array}
\right)\subset\mathfrak{m}_{e_1-e_i}\oplus\mathfrak{m}_{e_1+e_i}.
$$
Hence, from Lemma~\ref{lem:4-8},
the orbit $\mathrm{Ad}(K)\lambda$ is not tangentially degenerate.
\end{proof}

\begin{pro}
In the case of $(G,K)=(Sp(2p+n),Sp(p) \times Sp(p+n))\; (p \geq 2, n \geq 0)$,
the orbit $\mathrm{Ad}(K)\lambda$ through a restricted root $\lambda = e_1+e_2$
is not tangentially degenerate.
\end{pro}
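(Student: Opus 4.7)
The strategy is to apply the criterion of Proposition~\ref{pro:tangentially degenerate} combined with Lemma~\ref{lem:4-8}. Since $\lambda=e_1+e_2$ is a short root of the restricted root system (of type $C_p$ when $n=0$ and $BC_p$ when $n\geq 1$), and since no nonzero scalar multiple of $\lambda$ other than $\pm\lambda$ lies in $R$, we have
\[
\sum_{\mu\mathrel{/\!/}\lambda}\mathfrak m_\mu=\mathfrak m_{e_1+e_2}.
\]
Therefore by Proposition~\ref{pro:tangentially degenerate}, the orbit $\mathrm{Ad}(K)\lambda$ is tangentially degenerate if and only if $\mathfrak m_{e_1+e_2}$ admits a nonzero subspace $V$ that is invariant under $\mathrm{ad}(\mathfrak z^\lambda_K)$.

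Invoking Lemma~\ref{lem:4-8}, it is enough to exhibit a single root $\nu\in R_+^\Delta$ such that $[\mathfrak k_\nu,X]\neq 0$ for every nonzero $X\in\mathfrak m_{e_1+e_2}$; the vanishing requirement from the lemma will then force $V=\{0\}$. The natural candidate is $\nu=e_1-e_2$, which lies in $R_+^\Delta$ because $\nu\perp\lambda$, and which satisfies $\lambda+\nu=2e_1\in R$ and $\lambda-\nu=2e_2\in R$. Unlike in the type $B$ setting, these targets of the bracket are genuine restricted roots, so the bracket lands nontrivially in $\mathfrak m_{2e_1}\oplus\mathfrak m_{2e_2}$.

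The remaining work is to verify that $\mathrm{ad}(\mathfrak k_{e_1-e_2})$ acts injectively on $\mathfrak m_{e_1+e_2}$. I would carry this out using the quaternionic matrix realization of $\mathfrak{sp}(2p+n)$ and the involution $\theta$ already introduced in the proof of Proposition~\ref{pro:quaternion Grassmannian manifold}. In that model, the root spaces $\mathfrak m_{e_1+e_2}$ and $\mathfrak k_{e_1-e_2}$ are supported on explicit off-diagonal entries involving the indices $1,2,p+1,p+2$, so that a generic $X\in\mathfrak m_{e_1+e_2}$ can be written as a single quaternion parameter, and the bracket with a generic $Y\in\mathfrak k_{e_1-e_2}$ produces quaternion multiples landing in $\mathfrak m_{2e_1}$ and $\mathfrak m_{2e_2}$. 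Demanding that this bracket vanish for all $Y$ forces the quaternion parameter to be zero, whence $X=0$.

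The main obstacle is precisely this explicit commutator computation: one must identify the root spaces $\mathfrak m_{e_1+e_2}$ and $\mathfrak k_{e_1-e_2}$ inside $\mathfrak{sp}(2p+n)$, and then check that the resulting quaternionic linear map $\mathfrak k_{e_1-e_2}\to\mathfrak m_{2e_1}\oplus\mathfrak m_{2e_2}$, $Y\mapsto[Y,X]$, is nonzero whenever $X\neq 0$. Once this calculation, entirely parallel in spirit to the one in Proposition~\ref{pro:quaternion Grassmannian manifold}, is in place, Lemma~\ref{lem:4-8} yields $V=\{0\}$, and Proposition~\ref{pro:tangentially degenerate} then immediately implies that $\mathrm{Ad}(K)(e_1+e_2)$ is not tangentially degenerate.
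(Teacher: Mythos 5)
Your proposal is correct and follows essentially the same route as the paper: reduce via Proposition~\ref{pro:tangentially degenerate} and Lemma~\ref{lem:4-8} to showing that $[\mathfrak k_{e_1-e_2},X]=0$ forces $X=0$ in $\mathfrak m_{e_1+e_2}$, using that this bracket lands in $\mathfrak m_{2e_1}\oplus\mathfrak m_{2e_2}$, which meets $\mathfrak m_{e_1+e_2}$ trivially. The explicit quaternionic computation you defer is exactly the one the paper carries out: with $X$ parametrized by $x\in\mathbf H$ and $\mathfrak k_{e_1-e_2}$ by $y\in\mathbf H$, the bracket equals $(x\bar y-y\bar x)(E_{1,p+1}+E_{p+1,1})+(\bar x y-\bar y x)(E_{2,p+2}+E_{p+2,2})$, and requiring $x\bar y=y\bar x$ for all $y$ indeed forces $x=0$.
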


\begin{proof}
When $n \geq 1$, the restricted root system of $(G,K)$ is of type $BC_p$.
And when $n = 0$, the restricted root system is of type $C_p$.
However, we shall consider both cases uniformly.
In order to prove the proposition,
it suffices to show that $\{0\}$ is the only subspace of $\mathfrak{m}_{e_1+e_2}$
invariant under $\mathrm{ad}(\mathfrak z_K^\lambda)$.

Let $V$ be a subspace of $\mathfrak m_{e_1+e_2}$
invariant under $\mathrm{ad}(\mathfrak z_K^\lambda)$.
We take $X \in V$ arbitrarily.
Then $[\mathfrak k_{e_1-e_2}, X] \subset V \subset \mathfrak m_{e_1+e_2}$.
On the other hand,
$[\mathfrak k_{e_1-e_2}, X] \subset \mathfrak m_{2e_1} \oplus \mathfrak m_{2e_2}$.
Therefore we have $[\mathfrak k_{e_1-e_2}, X] = \{ 0 \}$.

Under the notation of the proof
of Proposition~\ref{pro:quaternion Grassmannian manifold},
restricted root spaces $\mathfrak m_{e_i+e_j}$ and $\mathfrak k_{e_i-e_j}$
are given by
\begin{eqnarray*}
&& \mathfrak{m}_{e_i+e_j} = \{x(E_{i,p+j}+E_{p+i,j})-\bar{x}(E_{p+j,i}+E_{j,p+i})
\mid x \in \mathbf H \},\\
&& \mathfrak{k}_{e_i-e_j} = \{y(E_{ij}+E_{p+i,p+j})-\bar{y}(E_{ji}+E_{p+j,p+i})
\mid y \in \mathbf H \}.
\end{eqnarray*}
We put 
$X = x(E_{1,p+2}+E_{p+1,2})-\bar{x}(E_{p+2,1}+E_{2,p+1}) \in V$.
Then
$$
[\mathfrak k_{e_1-e_2}, X]
= \{ (x \bar{y} - y \bar{x})(E_{1,p+1} + E_{p+1,1})
+ (\bar{x} y - \bar{y} x)(E_{2,p+2} + E_{p+2,2}) \ | \
y \in \mathbf H \}.
$$
Therefore $x$ must be zero for the right-hand side to be $\{ 0 \}$.
Hence $V = \{ 0 \}$.
Consequently we have that $\{0\}$ is the only subspace of $\mathfrak{m}_{e_1+e_2}$
invariant under $\mathrm{ad}(\mathfrak z_K^\lambda)$.
\end{proof}

Next we shall show when 
$$
(G,K)=(E_6,SU(2)\cdot SU(6)),\quad
(E_7,SU(2)\cdot SO(12)),\quad
(E_8,SU(2)\cdot E_7),
$$
the orbit through a short root $\lambda$ is not tangentially degenerate. 
In these cases, $G/K$ is a compact quaternionic symmetric space 
whose restricted root system is of type $F_4$. 
See Appendix in detail. 

Let $\nu$ be in $R_+$ such that $\nu\perp \lambda$. 
Note that $[\nu, \mathfrak m_\lambda] = \{0\}$.
We take $X \in \mathfrak{m}_\lambda$ arbitrarily.
From Lemma~\ref{lem:4-8},
it is sufficient to prove that $[\mathfrak{k}_\nu ,X]=0$ implies $X=0$. 
Now we assume that $[\mathfrak{k}_\nu ,X]=0$.
Then, from the Jacobi identity and (2) of Lemma~\ref{lem:basis},
we have
$$
0=[\nu ,[\mathfrak{k}_\nu ,X]]
=[[\nu ,\mathfrak{k}_\nu ],X]+[\mathfrak{k}_\nu ,[\nu ,X]]
=[\mathfrak{m}_\nu ,X].
$$
Hence $[\mathfrak{k}_\nu +\mathfrak{m}_\nu ,X]=0$. 
Applying the inverse $\Phi^{-1}$ of the Cayley transform to 
the equality above, we have
$$
\left[\sum_{\alpha\in \tilde{R}, \pi (\Phi (\alpha ))=\nu}
(\mathbf{R}F_\alpha +\mathbf{R}G_\alpha ),\
\Phi^{-1}(X)\right]=0.
$$
Here we used Lemma~\ref{lem:Cayley}.
Since
$$
\left(\sum_{\alpha\in \tilde{R}, \pi (\Phi (\alpha ))=\nu}
(\mathbf{R}F_\alpha +\mathbf{R}G_\alpha )\right)^{\mathbf{C}}
=\sum_{\alpha\in \tilde{R}, \pi (\Phi (\alpha ))=\nu}
(\mathfrak{g}_\alpha +\mathfrak{g}_{-\alpha}),
$$
we have 
$$
\left[\sum_{\alpha\in \tilde{R}, \pi (\Phi (\alpha ))=\nu}
(\mathfrak{g}_\alpha +\mathfrak{g}_{-\alpha}),\
\Phi^{-1}(X)\right]=0.
$$
Using Lemma~\ref{lem:Cayley} again, we have
\begin{eqnarray*}
\Phi^{-1}(X)\in \Phi^{-1}(\mathfrak{m}_\lambda )
\subset \Phi^{-1}(\mathfrak{k}_\lambda +\mathfrak{m}_\lambda )
&=& \sum_{\alpha\in \tilde{R}, \pi (\Phi (\alpha ))=\lambda}
(\mathbf{R}F_\alpha +\mathbf{R}G_\alpha )\\
&\subset& \sum_{\alpha\in \tilde{R}, \pi (\Phi (\alpha ))=\lambda}
(\mathfrak{g}_\alpha +\mathfrak{g}_{-\alpha}).
\end{eqnarray*}
Hence it is sufficient to prove that if
$$
Y\in \sum_{\alpha\in \tilde{R}, \pi (\Phi (\alpha ))=\lambda}
(\mathfrak{g}_\alpha \oplus  \mathfrak{g}_{-\alpha})
$$
satisfies
\begin{equation} \label{eq:4-2}
\left[\sum_{\alpha\in \tilde{R}, \pi (\Phi (\alpha ))=\nu}
(\mathfrak{g}_\alpha \oplus  \mathfrak{g}_{-\alpha}),\ Y \right] = 0,
\end{equation}
then $Y$ must be $0$. 

We shall prove the above claim for each of the three cases.

\begin{pro} \label{pro:(E_6,SU(2)cdot SU(6))}
In the case of $(G,K)=(E_6,SU(2)\cdot SU(6))$, 
the orbit $\mathrm{Ad}(K)\lambda$ through a short root $\lambda$ 
is not tangentially degenerate.
\end{pro}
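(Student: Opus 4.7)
The plan is to reduce the statement, via the discussion preceding it, to a concrete linear-algebra problem on root spaces of $E_6$ and then resolve that problem by explicit bracket computations. Set $\mathfrak{l}=\mathfrak{g}^{\mathbf{C}}$ with its root system $\tilde R$ of type $E_6$, and let $\pi\circ\Phi:\tilde R\to R\cup\{0\}$ be the map induced by the Cayley transform used in the paper's Appendix. Choose a short root $\nu\in R_+$ with $\nu\perp\lambda$ and $\lambda\pm\nu\in R$, whose existence is guaranteed by the proof of Proposition~\ref{pro:4-6} applied to $F_4$. Let
\[
S_\lambda=\{\alpha\in\tilde R\mid\pi(\Phi(\alpha))=\lambda\},\qquad
S_\nu=\{\alpha\in\tilde R\mid\pi(\Phi(\alpha))=\nu\}.
\]
The task reduces, as already explained, to showing that any
\[
Y=\sum_{\alpha\in S_\lambda\cup(-S_\lambda)}c_\alpha E_\alpha
\quad\text{with}\quad
[E_\beta,Y]=0\ \text{ for every }\beta\in S_\nu\cup(-S_\nu)
\]
is zero.

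First I would work out $S_\lambda$ and $S_\nu$ explicitly using the data in the paper's Appendix. For $(E_6,SU(2)\cdot SU(6))$ the quaternionic structure gives a uniform description: the fibers of $\pi\circ\Phi$ over a short $F_4$ root $\mu$ consist of the $E_6$ roots that project to $\mu$ along $\mathfrak{a}$, and one can list them by using a standard choice of simple roots. I would then, for each $\alpha\in S_\lambda$ and each $\beta\in S_\nu$, identify which sums $\alpha+\beta$ lie in $\tilde R$. Since $\lambda+\nu,\lambda-\nu\in R$ and $\lambda,\nu$ are short, the fiber combinatorics forces several such sums to be roots; this is the geometric content one really needs.

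The computation proper: write $[E_\beta,E_\alpha]=N_{\beta,\alpha}E_{\beta+\alpha}$ whenever $\beta+\alpha\in\tilde R$, with $N_{\beta,\alpha}\ne0$. The condition $[E_\beta,Y]=0$, applied for all $\beta\in S_\nu\cup(-S_\nu)$, yields a linear system on the coefficients $c_\alpha$. The key observation is that for a fixed $\gamma\in\tilde R$ lying over $\lambda+\nu$ or $\lambda-\nu$, the only way a term $E_\gamma$ can arise from $[E_\beta,Y]$ is through a unique pair $(\beta,\alpha)$ with $\beta+\alpha=\gamma$, because the $E_6$ root system is simply-laced and $\gamma-\beta$ determines $\alpha$ uniquely. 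Consequently each individual coefficient $c_\alpha$ appears as the sole contributor to the coefficient of some $E_\gamma$ in some $[E_\beta,Y]$, and hence must vanish. Dualizing by considering $-\beta$ handles the $c_{-\alpha}$ terms symmetrically.

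The main obstacle I foresee is bookkeeping: correctly listing the two fibers $S_\lambda,S_\nu$ and checking that, for every $\alpha\in\pm S_\lambda$, there exists $\beta\in\pm S_\nu$ with $\alpha+\beta\in\tilde R$ and that this sum is not hit by any other admissible pair. Once this uniqueness of contributions is verified for one well-chosen pair $(\lambda,\nu)$, the conclusion $Y=0$ is immediate, and by the reduction recalled in the excerpt this proves that $\mathrm{Ad}(K)\lambda$ is not tangentially degenerate.
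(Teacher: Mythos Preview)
Your plan is correct and is essentially the same argument the paper gives: choose a short $\nu\perp\lambda$, list the fibers $S_\lambda=(\pi\Phi)^{-1}(\lambda)$ and $S_\nu=(\pi\Phi)^{-1}(\nu)$, and kill each coefficient $c_\alpha$ by reading off the $E_{\alpha+\beta}$-component of $[E_\beta,Y]$ for a suitable $\beta\in\pm S_\nu$. The paper carries this out with the concrete choices $S_\lambda=\{\alpha_1,\alpha_6\}$ and $\nu=\pi(\Phi(\alpha_3+\alpha_4+\alpha_5+\alpha_6))$, and a single bracket $[E_{\pm(\alpha_3+\alpha_4+\alpha_5+\alpha_6)},Y]$ already annihilates all four coefficients.

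One small simplification: your ``uniqueness of contributions'' worry is unnecessary. For a \emph{fixed} $\beta$ the map $\alpha\mapsto\alpha+\beta$ is injective, so in $[E_\beta,Y]=\sum_\alpha c_\alpha N_{\beta,\alpha}E_{\alpha+\beta}$ the nonzero summands are automatically linearly independent root vectors. Hence $[E_\beta,Y]=0$ forces $c_\alpha=0$ for every $\alpha$ with $\alpha+\beta\in\tilde R$; the only thing you must check is that each $\alpha\in\pm S_\lambda$ admits some $\beta\in\pm S_\nu$ with $\alpha+\beta\in\tilde R$. The simply-laced hypothesis plays no role in this uniqueness.
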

\begin{proof}
We may put $\lambda =\pi (\Phi (\alpha_1))$. 
Then $\lambda$ is a short root, and
$$
(\pi\Phi )^{-1}(\lambda )
= \{\alpha\in \tilde{R} \mid \pi (\Phi (\alpha ))=\lambda \}
= \{\alpha_1,\alpha_6\}.
$$
We set $\nu =\pi (\Phi (\alpha_3+\alpha_4+\alpha_5+\alpha_6))$.
Then $\nu$ is a short root perpendicular to $\lambda$, and  
$$
(\pi\Phi )^{-1}(\nu )
= \{ \alpha_3+\alpha_4+\alpha_5+\alpha_6,\
\alpha_1+\alpha_3+\alpha_4+\alpha_5\}.
$$

Now we assume that
$$
Y=x_1E_{\alpha_1}+y_1E_{-\alpha_1}+x_2E_{\alpha_6}+y_2E_{-\alpha_6}\in 
\sum_{\alpha\in \tilde{R}, \pi (\Phi (\alpha ))=\lambda}
(\mathfrak{g}_\alpha \oplus  \mathfrak{g}_{-\alpha})
$$
satisfies the condition (\ref{eq:4-2}).
We note that the set of roots of the form 
$\alpha_3+\alpha_4+\alpha_5+\alpha_6\pm \alpha$
where $\alpha \in (\pi\Phi )^{-1}(\lambda )$ is
$$
\{(\alpha_3+\alpha_4+\alpha_5+\alpha_6)+\alpha_1,
(\alpha_3+\alpha_4+\alpha_5+\alpha_6)-\alpha_6\}.
$$
Therefore we have
\begin{eqnarray*}
\lefteqn{[E_{\alpha_3+\alpha_4+\alpha_5+\alpha_6},Y]} \\ 
&=& x_1N_{\alpha_3+\alpha_4+\alpha_5+\alpha_6,\alpha_1}
E_{\alpha_1+\alpha_3+\alpha_4+\alpha_5+\alpha_6}+
y_2N_{\alpha_3+\alpha_4+\alpha_5+\alpha_6,-\alpha_6}
E_{\alpha_3+\alpha_4+\alpha_5}.
\end{eqnarray*}
This shows that the condition $[E_{\alpha_3+\alpha_4+\alpha_5+\alpha_6},Y]=0$ 
yields $x_1=y_2=0$. 
Similarly the condition $[E_{-(\alpha_3+\alpha_4+\alpha_5+\alpha_6)},Y]=0$ 
yields $y_1=x_2=0$.
Hence we obtain $Y=0$.
\end{proof}

The following two propositions can be proved in a similar way 
to the proof of Proposition~\ref{pro:(E_6,SU(2)cdot SU(6))}. 
So we write only the essentials of their proofs.

\begin{pro}
In the case of $(G,K)=(E_7,SU(2)\cdot SO(12))$, 
the orbit $\mathrm{Ad}(K)\lambda$ through a short root $\lambda$ 
is not tangentially degenerate.
\end{pro}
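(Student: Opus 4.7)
The plan is to mimic the proof of Proposition~\ref{pro:(E_6,SU(2)cdot SU(6))} verbatim at the level of strategy, replacing only the root-theoretic data by that of $(E_7,SU(2)\cdot SO(12))$. By the reduction carried out in the paragraphs preceding Proposition~\ref{pro:(E_6,SU(2)cdot SU(6))}, it suffices to exhibit a short root $\lambda$ and a short root $\nu\in R_+$ with $\nu\perp\lambda$, and then to show that every
$$
Y\in\sum_{\alpha\in\tilde R,\ \pi(\Phi(\alpha))=\lambda}
(\mathfrak g_\alpha\oplus\mathfrak g_{-\alpha})
$$
satisfying condition (\ref{eq:4-2}) must vanish.

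First I would pin down the fibres $(\pi\Phi)^{-1}(\lambda)$ and $(\pi\Phi)^{-1}(\nu)$ explicitly using the description of the Cayley transform and the restricted root system of type $F_4$ recalled in the Appendix; since $G/K$ is a quaternionic symmetric space of dimension $64$, the restricted root system is of type $F_4$ with long root multiplicity $1$ and short root multiplicity $4$, so each short-root fibre should contain exactly four elements of $\tilde R$. Picking $\lambda$ as the image of a simple root corresponding to a short edge of the $F_4$ diagram, and $\nu$ as a short root perpendicular to $\lambda$ chosen so that the fibre over $\nu$ is also of size four, I would then write
$$
Y=\sum_{\alpha\in(\pi\Phi)^{-1}(\lambda)}\bigl(x_\alpha E_\alpha+y_\alpha E_{-\alpha}\bigr).
$$

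The key computation is then to bracket $Y$ against $E_\beta$ for each $\beta\in(\pi\Phi)^{-1}(\nu)$ and read off coefficients. For each $\alpha$ in the fibre over $\lambda$ one must locate a $\beta$ in the fibre over $\nu$ such that exactly one of $\alpha\pm\beta$ lies in $\tilde R$, so that the corresponding structure-constant term $N_{\pm\alpha,\beta}E_{\pm\alpha+\beta}$ survives uncancelled, forcing $x_\alpha$ or $y_\alpha$ to be zero. Running through all four choices of $\alpha$, and also using the brackets against $E_{-\beta}$ for the same $\beta$'s, should eliminate every coefficient, yielding $Y=0$.

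The main obstacle is purely bookkeeping: one must verify case-by-case that the $8\times 8$ table of sums $\alpha\pm\beta$ produces enough roots of $\tilde R$ to kill all eight coefficients $x_\alpha,y_\alpha$, and that none of the nonzero structure constants $N_{\alpha,\beta}$ cancel against each other in a single bracket. Since $E_7$ has $126$ roots and one is only testing sums inside two four-element fibres, this is a small finite verification rather than a conceptual difficulty; an auxiliary observation that makes it clean is that in each fibre the four roots differ pairwise by roots of $\tilde R_0$ (equivalently, they lie in a single orbit of the centralizer Weyl group), so it suffices to check the bracket relations for one representative $\alpha$ and then transport the conclusion to the others by the $\mathfrak k_0$-action.
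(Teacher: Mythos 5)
Your proposal follows essentially the same route as the paper: reduce via the Cayley transform to showing that any $Y$ in the sum of $\mathfrak g_{\pm\alpha}$ over the four-element fibre $(\pi\Phi)^{-1}(\lambda)$ annihilated by the fibre over a perpendicular short root $\nu$ must vanish, and then kill the eight coefficients by bracketing with $E_{\pm\beta}$ and checking which sums $\beta\pm\alpha$ are roots. The paper carries out exactly this with $\lambda=\pi(\Phi(\alpha_4))$ and $\nu=\pi(\Phi(\alpha_3+\alpha_4))$, supplying the explicit lists of surviving root sums that your proposal defers to ``bookkeeping''; your suggested $\mathfrak k_0$-transport shortcut is not used there, but the argument is otherwise the same.
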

\begin{proof}
We may put $\lambda =\pi (\Phi (\alpha_4))$. 
Then $\lambda$ is a short root, and
$$
(\pi\Phi )^{-1}(\lambda )
=\{\alpha_4,\; \alpha_4+\alpha_5,\; \alpha_2+\alpha_4,\; 
\alpha_2+\alpha_4+\alpha_5\}.
$$
We set $\nu =\pi (\Phi (\alpha_3+\alpha_4))$.
Then $\nu$ is a short root perpendicular to $\lambda$, and
$$
(\pi\Phi )^{-1}(\nu )
=\{\alpha_3+\alpha_4,\; \alpha_3+\alpha_4+\alpha_5,\;
\alpha_2+\alpha_3+\alpha_4,\; \alpha_2+\alpha_3+\alpha_4+\alpha_5\}.
$$

We get the assertion from the following:
The set of roots of the form $\alpha_3+\alpha_4\pm\alpha$ where 
$\alpha \in (\pi\Phi )^{-1}(\lambda )$ is
$$
\{(\alpha_3+\alpha_4)-\alpha_4,
(\alpha_3+\alpha_4)+(\alpha_2+\alpha_4+\alpha_5)\}.
$$
The set of roots of the form
$\alpha_3+\alpha_4+\alpha_5\pm\alpha$ where
$\alpha \in (\pi\Phi )^{-1}(\lambda )-\{\alpha_4,\alpha_2+\alpha_4+\alpha_5\}$ is
$$
\{(\alpha_3+\alpha_4+\alpha_5 )-(\alpha_4+\alpha_5),\; 
(\alpha_3+\alpha_4+\alpha_5 )+(\alpha_2+\alpha_4)\}.
$$
\end{proof}

\begin{pro}
In the case of $(G,K)=(E_8,SU(2)\cdot E_7)$, 
the orbit $\mathrm{Ad}(K)\lambda$ through a short root $\lambda$ 
is not tangentially degenerate.
\end{pro}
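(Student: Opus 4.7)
The plan is to follow exactly the template used in the previous two propositions, adapted to the larger fibers of $\pi\circ\Phi$ appearing in the $E_8$ case. By Lemma~\ref{lem:4-8} and the reductions carried out before Proposition~\ref{pro:(E_6,SU(2)cdot SU(6))}, it suffices to choose a short root $\nu \in R_+$ with $\nu \perp \lambda$ and to prove that any
$$
Y \in \sum_{\alpha \in \tilde{R},\ \pi(\Phi(\alpha))=\lambda}
(\mathfrak{g}_\alpha \oplus \mathfrak{g}_{-\alpha})
$$
satisfying $\bigl[\sum_{\beta \in \tilde{R},\ \pi(\Phi(\beta))=\nu}(\mathfrak{g}_\beta \oplus \mathfrak{g}_{-\beta}),\ Y\bigr] = 0$ must be zero.

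Concretely, I would first select a simple root $\alpha_i$ of $E_8$ whose image $\lambda = \pi(\Phi(\alpha_i))$ is a short root in the restricted $F_4$ root system, and list all roots in the fiber $(\pi\Phi)^{-1}(\lambda) \subset \tilde{R}$; the structure of the Cayley transform for the quaternionic symmetric pair $(E_8, SU(2)\cdot E_7)$ (as in the Appendix) guarantees such a description. Next I would choose $\nu$ to be a short root of $F_4$ perpendicular to $\lambda$, again written as $\pi(\Phi(\beta_0))$ for some explicit positive root $\beta_0$ of $E_8$, and list the fiber $(\pi\Phi)^{-1}(\nu)$ analogously. Writing $Y = \sum_{\alpha} \bigl(x_\alpha E_\alpha + y_\alpha E_{-\alpha}\bigr)$ with $\alpha$ running over $(\pi\Phi)^{-1}(\lambda)$, I would then expand $[E_{\beta}, Y]$ and $[E_{-\beta}, Y]$ for each $\beta \in (\pi\Phi)^{-1}(\nu)$. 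Since the structure constants $N_{\beta,\alpha}$ are non-zero whenever $\beta \pm \alpha \in \tilde{R}$, each relation $[E_{\pm\beta}, Y] = 0$ reads off as vanishing of certain coefficients $x_\alpha$ or $y_\alpha$ whose index $\alpha$ contributes a root sum $\beta \pm \alpha$ lying in $\tilde{R}$.

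The bookkeeping step consists in verifying that, as $\beta$ ranges over $(\pi\Phi)^{-1}(\nu)$ and the sign varies, the pairs $(\beta, \pm\alpha)$ with $\beta \pm \alpha \in \tilde{R}$ cover every $\alpha \in (\pi\Phi)^{-1}(\lambda)$ with both signs. This is the same combinatorial mechanism that killed $x_1,y_1,x_2,y_2$ in the $E_6$ case and the eight coefficients in the $E_7$ case; in the $E_8$ case it will produce a larger linear system whose vanishing forces every $x_\alpha$ and $y_\alpha$ to be zero. Once this is in place, we conclude $Y = 0$, and hence by the reduction above the only subspace of $\mathfrak{m}_\lambda$ invariant under $\mathrm{ad}(\mathfrak{z}_K^\lambda)$ is $\{0\}$. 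Proposition~\ref{pro:tangentially degenerate} then yields that $\mathrm{Ad}(K)\lambda$ is not tangentially degenerate.

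The main obstacle is purely combinatorial: the fibers $(\pi\Phi)^{-1}(\lambda)$ and $(\pi\Phi)^{-1}(\nu)$ are larger here than in the $E_6$ and $E_7$ cases, so one must carefully enumerate which sums $\beta \pm \alpha$ land in the $E_8$ root system. The art lies in picking the simple root $\alpha_i$ and the perpendicular short root $\nu$ so that this enumeration is as transparent as possible; once the correct pair $(\lambda,\nu)$ is fixed, the remainder of the argument is a direct, if lengthy, verification of the sort already recorded in the proof of the $E_7$ proposition.
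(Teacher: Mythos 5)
Your strategy is exactly the paper's: reduce via the Cayley transform to showing that any $Y$ in the sum of $\mathfrak{g}_{\pm\alpha}$ over the fiber $(\pi\Phi)^{-1}(\lambda)$ annihilated by the corresponding sum over $(\pi\Phi)^{-1}(\nu)$ must vanish, with $\lambda=\pi(\Phi(\alpha_1))$ and $\nu$ a perpendicular short root. However, what you have written is a plan rather than a proof: the entire substantive content of the paper's argument for this proposition is precisely the enumeration you defer, namely the explicit lists of the eight roots in $(\pi\Phi)^{-1}(\lambda)$, the eight roots in $(\pi\Phi)^{-1}(\nu)$ for $\nu=\pi(\Phi(\alpha_1+\alpha_2+\alpha_3+2\alpha_4+2\alpha_5+2\alpha_6+\alpha_7))$, and the check that the bracket relations kill all sixteen coefficients. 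Two points deserve care if you carry this out. First, the elimination is not done with a single $\beta$: the paper uses four successive roots $\beta$ from the fiber of $\nu$, and at each stage considers only those $\alpha$ in the fiber of $\lambda$ whose coefficients have not yet been shown to vanish; for each such $\beta$ exactly two sums $\beta\pm\alpha$ lie in $\tilde R$ (one of each sign), so $[E_{\pm\beta},Y]=0$ kills exactly four new coefficients per stage. Second, your phrase ``reads off as vanishing of certain coefficients'' silently uses that the resulting roots $\beta+\alpha$ and $\beta-\alpha'$ are distinct, so the corresponding root vectors are linearly independent; this is guaranteed by the explicit lists but should be observed. With those verifications supplied, your argument coincides with the paper's.
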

\begin{proof}
We may put $\lambda =\pi (\Phi (\alpha_1))$. 
Then $\lambda$ is a short root, and
$$
(\pi\Phi )^{-1}(\lambda )
=\left\{ \begin{array}{l}
\alpha_1,\alpha_1+\alpha_3,\;
\alpha_1+\alpha_3+\alpha_4,\;
\alpha_1+\alpha_2+\alpha_3+\alpha_4, \\
\alpha_1+\alpha_3+\alpha_4+\alpha_5,\;
\alpha_1+\alpha_2+\alpha_3+\alpha_4+\alpha_5, \\
\alpha_1+\alpha_2+\alpha_3+2\alpha_4+\alpha_5,\;
\alpha_1+\alpha_2+2\alpha_3+2\alpha_4+\alpha_5
\end{array} \right\}.
$$
We set
$\nu = \pi (\Phi (
\alpha_1+\alpha_2+\alpha_3+2\alpha_4+2\alpha_5+2\alpha_6+\alpha_7))$.
Then $\nu$ is a short root perpendicular to $\lambda$, and
$$
(\pi\Phi )^{-1}(\nu )
= \left\{ \begin{array}{l}
\alpha_1+\alpha_2+\alpha_3+2\alpha_4+2\alpha_5+2\alpha_6+\alpha_7,\\
\alpha_1+\alpha_2+2\alpha_3+2\alpha_4+2\alpha_5+2\alpha_6+\alpha_7,\\
\alpha_1+\alpha_2+2\alpha_3+3\alpha_4+2\alpha_5+2\alpha_6+\alpha_7,\\
\alpha_1+2\alpha_2+2\alpha_3+3\alpha_4+2\alpha_5+2\alpha_6+\alpha_7,\\
\alpha_1+\alpha_2+2\alpha_3+3\alpha_4+3\alpha_5+2\alpha_6+\alpha_7,\\
\alpha_1+2\alpha_2+2\alpha_3+3\alpha_4+3\alpha_5+2\alpha_6+\alpha_7,\\
\alpha_1+2\alpha_2+2\alpha_3+4\alpha_4+3\alpha_5+2\alpha_6+\alpha_7,\\
\alpha_1+2\alpha_2+3\alpha_3+4\alpha_4+3\alpha_5+2\alpha_6+\alpha_7
\end{array} \right\}.
$$

We get the assertion from the following:
The set of roots of the form 
$\alpha_1+\alpha_2+\alpha_3+2\alpha_4+2\alpha_5+2\alpha_6+\alpha_7\pm\alpha$
where $\alpha \in (\pi\Phi )^{-1}(\lambda )$ is
$$
\left\{ \begin{array}{l}
(\alpha_1+\alpha_2+\alpha_3+2\alpha_4+2\alpha_5+2\alpha_6+\alpha_7)-\alpha_1,\\
(\alpha_1+\alpha_2+\alpha_3+2\alpha_4+2\alpha_5+2\alpha_6+\alpha_7)+
(\alpha_1+\alpha_2+2\alpha_3+2\alpha_4+\alpha_5)
\end{array} \right\}.
$$
The set of roots of the form 
$\alpha_1+\alpha_2+2\alpha_3+2\alpha_4+2\alpha_5+2\alpha_6+\alpha_7\pm\alpha$
where
$$
\alpha \in (\pi\Phi )^{-1}(\lambda )-
\{\alpha_1,\alpha_1+\alpha_2+2\alpha_3+2\alpha_4+\alpha_5\}
$$
is
$$
\left\{ \begin{array}{l}
(\alpha_1+\alpha_2+2\alpha_3+2\alpha_4+2\alpha_5+2\alpha_6+\alpha_7)
-(\alpha_1+\alpha_3),\\
(\alpha_1+\alpha_2+2\alpha_3+2\alpha_4+2\alpha_5+2\alpha_6+\alpha_7)
+(\alpha_1+\alpha_2+\alpha_3+2\alpha_4+\alpha_5)
\end{array} \right\}.
$$
The set of roots of the form 
$\alpha_1+\alpha_2+2\alpha_3+3\alpha_4+2\alpha_5+2\alpha_6+\alpha_7\pm\alpha$
where
$$
\alpha \in (\pi\Phi )^{-1}(\lambda )-
\left\{ \begin{array}{l}
\alpha_1,\
\alpha_1+\alpha_2+2\alpha_3+2\alpha_4+\alpha_5,\\
\alpha_1+\alpha_3,\
\alpha_1+\alpha_2+\alpha_3+2\alpha_4+\alpha_5
\end{array} \right\}
$$
is
$$
\left\{ \begin{array}{l}
(\alpha_1+\alpha_2+2\alpha_3+3\alpha_4+2\alpha_5+2\alpha_6+\alpha_7)
-(\alpha_1+\alpha_3+\alpha_4),\\
(\alpha_1+\alpha_2+2\alpha_3+3\alpha_4+2\alpha_5+2\alpha_6+\alpha_7)
+(\alpha_1+\alpha_2+\alpha_3+\alpha_4+\alpha_5)
\end{array} \right\}.
$$
The set of roots of the form 
$\alpha_1+2\alpha_2+2\alpha_3+3\alpha_4+2\alpha_5+2\alpha_6
+\alpha_7\pm\alpha$ where
$$
\alpha \in (\pi\Phi )^{-1}(\lambda )-
\left\{ \begin{array}{l}
\alpha_1,\
\alpha_1+\alpha_2+2\alpha_3+2\alpha_4+\alpha_5,\\
\alpha_1+\alpha_3,\
\alpha_1+\alpha_2+\alpha_3+2\alpha_4+\alpha_5,\\
\alpha_1+\alpha_3+\alpha_4,\
\alpha_1+\alpha_2+\alpha_3+\alpha_4+\alpha_5
\end{array} \right\}
$$
is
$$
\left\{ \begin{array}{l}
(\alpha_1+2\alpha_2+2\alpha_3+3\alpha_4+2\alpha_5+2\alpha_6+\alpha_7)
-(\alpha_1+\alpha_2+\alpha_3+\alpha_4),\\
(\alpha_1+2\alpha_2+2\alpha_3+3\alpha_4+2\alpha_5+2\alpha_6+\alpha_7)
+(\alpha_1+\alpha_3+\alpha_4+\alpha_5)
\end{array} \right\}.
$$
\end{proof}

\subsection{List of tangentially degeneracy}

At the end of this section, we give the list of symmetric pairs
whose ranks are equal or greater than $2$
such that the orbits of their $s$-representations have degenerate Gauss mappings.
All of them are orbits through long roots except the case of type $G_2$.
In the case of type $G_2$ both of orbits through a long root and a short root
have degenerate Gauss mappings,
and both of them have the same dimension and the same rank of Gauss mapping.
In Table~\ref{table:list of tangentially degenerate orbits},
we denote the dimension of the orbit by $l$ and
the rank of Gauss mapping by $r$.
Then tangentially degeneracy is equal to $l-r$.

\begin{table}[h]
\begin{tabular}{c|c|l|l|l|l|l}
\hline
type & rank & $\mathfrak g$ & $\mathfrak k$ & $l$ & $r$ & $l-r$ \\ \hline
$A$ & $p$ & $\mathfrak{su}(p+1)$ & $\mathfrak{so}(p+1)$ & $2p-1$ & $2p-2$ & $1$ \\
    & $p$ & $\mathfrak{su}(p+1)^2$ & $\mathfrak{su}(p+1)$ &
            $2(2p-1)$ & $2(2p-2)$ & $2$ \\
    & $p$ & $\mathfrak{su}(2(p+1))$ & $\mathfrak{sp}(p+1)$ &
            $4(2p-1)$ & $4(2p-2)$ & $4$ \\
    & $2$ & $\mathfrak e_6$ & $\mathfrak f_4$ & $24$ & $16$ & 8 \\ \hline
$B$ & $p$ & $\mathfrak{so}(2p+1)^2$ & $\mathfrak{so}(2p+1)$ & $8p-10$ & $8p-12$ & $2$ \\
    & $p$ & $\mathfrak{so}(2p+n)$ & $\mathfrak{so}(p) \oplus \mathfrak{so}(p+n)$ &
            $4p+2n-7$ & $4p+2n-8$ & $1$ \\ \hline
$C$ & $p$ & $\mathfrak{sp}(p)$ & $\mathfrak{u}(p)$ & $2p-1$ & $2p-2$ & $1$ \\
    & $p$ & $\mathfrak{sp}(p)^2$ & $\mathfrak{sp}(p)$ & $4p-2$ & $4p-4$ & $2$ \\
    & $p$ & $\mathfrak{sp}(2p)$ & $\mathfrak{sp}(p) \oplus \mathfrak{sp}(p)$ &
            $8p-5$ & $8p-8$ & $3$ \\
    & $p$ & $\mathfrak{su}(2p)$ & $\mathfrak{su}(p) \oplus \mathfrak{su}(p)
            \oplus \mathbf{R}$ & $4p-3$ & $4p-4$ & $1$ \\
    & $p$ & $\mathfrak{so}(4p)$ & $\mathfrak{u}(2p)$ & $8p-7$ & $8p-8$ & $1$ \\
    & $3$ & $\mathfrak e_7$ & $\mathfrak e_6 \oplus \mathbf{R}$ &
            $33$ & $32$ & $1$ \\ \hline
$D$ & $p$ & $\mathfrak{so}(2p)$ & $\mathfrak{so}(p) \oplus \mathfrak{so}(p)$ &
            $4p-7$ & $4p-8$ & 1 \\
    & $p$ & $\mathfrak{so}(2p)^2$ & $\mathfrak{so}(2p)$ &
            $2(4p-7)$ & $2(4p-8)$ & $2$\\ \hline
$E_6$ & $6$ & $\mathfrak e_6$ & $\mathfrak{sp}(4)$ & $21$ & $20$ & $1$ \\
      & $6$ & $\mathfrak e_6 \oplus \mathfrak e_6$ & $\mathfrak e_6$ &
              $42$ & $40$ & $2$ \\ \hline
$E_7$ & $7$ & $\mathfrak e_7$ & $\mathfrak{su}(8)$ & $33$ & $32$ & $1$ \\
      & $7$ & $\mathfrak e_7 \oplus \mathfrak e_7$ & $\mathfrak e_7$ &
              $66$ & $64$ & $2$\\ \hline
$E_8$ & $8$ & $\mathfrak e_8$ & $\mathfrak{so}(16)$ & $57$ & $56$ & $1$ \\
      & $8$ & $\mathfrak e_8 \oplus \mathfrak e_8$ & $\mathfrak e_8$ &
              $114$ & $112$ & $2$ \\ \hline
$F_4$ & $4$ & $\mathfrak f_4$ & $\mathfrak{su}(2) \oplus \mathfrak{sp}(3)$ &
              $15$ & $14$ & $1$ \\
      & $4$ & $\mathfrak f_4 \oplus \mathfrak f_4$ & $\mathfrak f_4$ &
              $30$ & $28$ & $2$ \\
      & $4$ & $\mathfrak e_6$ & $\mathfrak{su}(2) \oplus \mathfrak{su}(6)$ &
              $21$ & $20$ & $1$ \\
      & $4$ & $\mathfrak e_7$ & $\mathfrak{su}(2) \oplus \mathfrak{so}(12)$ &
              $33$ & $32$ & $1$ \\
      & $4$ & $\mathfrak e_8$ & $\mathfrak{su}(2) \oplus \mathfrak e_7$ &
              $57$ & $56$ & $1$ \\ \hline
$G_2$ & $2$ & $\mathfrak g_2$ & $\mathfrak{so}(4)$ &
              $5$ & $4$ & $1$ \\
      & $2$ & $\mathfrak g_2 \oplus \mathfrak g_2$ & $\mathfrak g_2$&
              $10$ & $8$ & $2$ \\ \hline
$BC$  & $p$ & $\mathfrak{su}(2p+n)$ & $\mathfrak{su}(p) \oplus \mathfrak{su}(p+n)
              \oplus \mathbf{R}$ & $4p+2n-3$ & $4p+2n-4$ & $1$ \\
      & $p$ & $\mathfrak{so}(4p+2)$ & $\mathfrak{u}(2p+1)$ & $8p-3$ & $8p-4$ & $1$ \\
      & $p$ & $\mathfrak{sp}(2p+n)$ & $\mathfrak{sp}(p) \oplus \mathfrak{sp}(p+n)$ &
              $8p+4n-5$ & $8p+4n-8$ & $3$ \\
      & $2$ & $\mathfrak e_6$ & $\mathfrak{so}(10) \oplus \mathbf{R}$ &
              $21$ & $20$ & $1$ \\
\hline
\end{tabular}
\caption{}
\label{table:list of tangentially degenerate orbits}
\end{table}

In the list, we can find many orbits which satisfy the equality $r = F(l)$.
In order to observe this we state some properties of the Ferus number.
The definition of the Ferus number immediately implies $F(l)\leq l$.

\begin{lem}\label{lem:Ferus monotone}
$F(l)\leq F(l+1)$. 
\end{lem}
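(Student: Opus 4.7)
The plan is to read off the inequality directly from the definition of the Ferus number, with essentially no computation required.

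First I would recall the defining formula
\[
F(l) = \min\{k \mid A(k) + k \geq l\}.
\]
The set on the right is a subset of the positive integers, so its minimum exists as soon as the set is non-empty; and since $A(k) \geq 0$ for every $k$, the value $k = l$ always satisfies $A(k) + k \geq l$, so the minimum is well-defined for each $l$. I would record this briefly so that $F(l+1)$ is manifestly a well-defined positive integer.

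Next I would set $k_0 := F(l+1)$, so by definition $A(k_0) + k_0 \geq l+1$. The key observation is simply that $l+1 \geq l$, hence
\[
A(k_0) + k_0 \geq l+1 \geq l,
\]
which means $k_0$ lies in the set $\{k \mid A(k) + k \geq l\}$ whose minimum is $F(l)$. Therefore $F(l) \leq k_0 = F(l+1)$, which is the asserted inequality.

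There is no real obstacle here: the statement is an immediate monotonicity consequence of the fact that a smaller lower bound on the quantity $A(k)+k$ gives a possibly larger (never smaller) feasible set, hence a possibly smaller (never larger) minimum. The only thing one must be careful about is non-emptiness of the defining set, which I addressed in the first step. I would therefore present the argument in just a couple of lines rather than trying to unpack the explicit formula $A(k) = 2^c + 8d - 1$; the monotonicity holds for any non-negative function $A$ in place of the Adams number.
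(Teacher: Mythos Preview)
Your argument is correct and is essentially identical to the paper's: the paper simply notes the inclusion $\{k\mid A(k)+k\geq l+1\}\subset\{k\mid A(k)+k\geq l\}$ and takes minima, which is exactly what your choice of $k_0=F(l+1)$ unwinds. Your extra remark on non-emptiness is a minor addition but does no harm.
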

\begin{proof}
The relation
$\{k\mid A(k)+k\geq l+1\}\subset \{k\mid A(k)+k\geq l\}$ implies
$$
F(l+1)=\min\{k\mid A(k)+k\geq l+1\}
\geq \min\{k\mid A(k)+k\geq l\}=F(l).
$$
\end{proof}

\begin{lem}\label{lem:Ferus power}
$F(2^q)=2^q$. 
\end{lem}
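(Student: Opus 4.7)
The plan is to prove $F(2^q) = 2^q$ by establishing the two inequalities $F(2^q) \leq 2^q$ and $F(2^q) \geq 2^q$ separately. The upper bound is immediate from the definition: writing $q = 4d + c$ with $0 \leq c \leq 3$, one has $A(2^q) = 2^c + 8d - 1 \geq 0$, so $A(2^q) + 2^q \geq 2^q$, which shows $2^q$ belongs to the set whose minimum defines $F(2^q)$.

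For the reverse inequality, I would show that every positive integer $k$ with $k < 2^q$ satisfies $A(k) + k < 2^q$, so that $F(2^q)$ cannot be smaller than $2^q$. Writing $k = (2s+1)2^t$ with $t = c + 4d$ and $0 \leq c \leq 3$, the inequality $k < 2^q$ forces $t < q$. Since $2s+1$ is odd while $2^{q-t}$ is even, we obtain $2s+1 \leq 2^{q-t} - 1$, hence $k \leq 2^q - 2^t$. Substituting the formula $A(k) = 2^c + 8d - 1$ then yields
\begin{equation*}
A(k) + k \;\leq\; 2^q - 1 - \bigl(2^{c+4d} - 2^c - 8d\bigr).
\end{equation*}

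The argument therefore reduces to the elementary inequality $2^{c+4d} \geq 2^c + 8d$ for $0 \leq c \leq 3$ and $d \geq 0$. This is trivial when $d = 0$, and for $d \geq 1$ it follows from $2^c(16^d - 1) \geq 16^d - 1 \geq 8d$, the last inequality being a quick induction on $d$. This yields $A(k) + k \leq 2^q - 1 < 2^q$, as required, and hence $F(2^q) \geq 2^q$.

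The only potential obstacle is the arithmetic inequality $2^{c+4d} \geq 2^c + 8d$, but since $16^d$ grows far more rapidly than $8d$, the bound is comfortable and the verification is entirely routine. Combining the two directions yields $F(2^q) = 2^q$.
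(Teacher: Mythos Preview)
Your proof is correct and follows essentially the same route as the paper. The paper also reduces to showing $A(k)+k<2^q$ for every $k<2^q$; it parametrizes $k=2^q-(2s+1)2^t$ (rather than $k=(2s+1)2^t$), but since $k$ and $2^q-k$ have the same $2$-adic valuation $t<q$, this leads to the identical elementary inequality $2^{c+4d}\ge 2^c+8d$, and the remaining arithmetic is the same.
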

\begin{proof}
It is sufficient to show $A(k)+k<2^q$ for $k<2^q$.
We write $k=2^q-(2s+1)2^t$ by some non-negative integers $s$ and $t$,
and $t=c+4d$ by some $0\leq c\leq 3$ and $d \geq 0$.
Then $t<q$ and we get
$$
A(k)=A(2^q-2^t(2s+1))=A(2^t(2^{q-t}-(2s+1)))
=2^c+8d-1.
$$
Thus
$$
A(k)+k=2^q-\{2^{c+4d}(2s+1)-2^c-8d+1\}.
$$
Here
\begin{eqnarray*}
2^{c+4d}(2s+1)-2^c-8d+1&\geq&2^{c+4d}-2^c-8d+1\\
&=&2^c(2^{4d}-1)-8d+1\\
&\geq& 2^{4d}-8d\geq 1
\end{eqnarray*}
Therefore we obtain $A(k)+k<2^q$D
\end{proof}

\begin{pro}\label{pro:Ferus number}
Assume $q\geq 1$ and write $q=c+4d\;(0\leq c\leq 3,d\geq 0)$.
Then
$$
F(2^q + a)=2^q
$$
holds for any $0 \le a \le 2^c+8d-1$.
\end{pro}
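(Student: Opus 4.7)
The plan is to sandwich $F(2^q+a)$ between $2^q$ from below and $2^q$ from above, using the two preceding lemmas together with the explicit formula for the Adams number $A$.

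First I would establish the lower bound. Since $a \geq 0$, iterating Lemma~\ref{lem:Ferus monotone} gives $F(2^q + a) \geq F(2^q)$, and Lemma~\ref{lem:Ferus power} then identifies $F(2^q) = 2^q$. This yields $F(2^q + a) \geq 2^q$ with no use of the upper bound on $a$.

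Next I would establish the upper bound by producing $2^q$ as an element of the set $\{k \mid A(k) + k \geq 2^q + a\}$, which immediately forces $F(2^q + a) \leq 2^q$. Writing $2^q = (2 \cdot 0 + 1) 2^q$ and using the decomposition $q = c + 4d$ with $0 \leq c \leq 3$, the formula $A(k) = 2^c + 8d - 1$ gives $A(2^q) = 2^c + 8d - 1$. Hence
\[
A(2^q) + 2^q = 2^q + (2^c + 8d - 1) \geq 2^q + a
\]
exactly because the hypothesis is $0 \leq a \leq 2^c + 8d - 1$. Combining the two inequalities yields $F(2^q + a) = 2^q$.

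There is essentially no obstacle here: the entire content of the proposition is that the upper bound $a \leq 2^c + 8d - 1$ is the largest shift of $2^q$ for which the minimizing $k$ in the definition of $F$ does not need to grow past $2^q$. The only thing that must be checked carefully is the arithmetic evaluation $A(2^q) = 2^c + 8d - 1$ from the given formula, which is immediate since $2^q = (2s+1)2^t$ with $s = 0$ and $t = q$.
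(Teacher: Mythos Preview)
Your proof is correct and follows essentially the same approach as the paper: compute $A(2^q)=2^c+8d-1$, use this to get the upper bound $F(2^q+a)\le 2^q$, and use Lemmas~\ref{lem:Ferus monotone} and~\ref{lem:Ferus power} for the lower bound. The only cosmetic difference is that the paper establishes the extreme case $a=2^c+8d-1$ and then sandwiches the intermediate values via monotonicity, whereas you verify the upper bound directly for each $a$.
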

\begin{proof}
Since $q\geq 1$, we have $c\geq 1$ or $d\geq 1$.
Thus $A(2^q)=2^c+8d-1\geq 1$. 
This shows $A(2^q)+2^q=2^q+2^c+8d-1$. 
From Lemmas~\ref{lem:Ferus monotone} and \ref{lem:Ferus power}
we get
$$
2^q\geq F(2^q+2^c+8d-1)\geq F(2^q)=2^q.
$$
\end{proof}

The above proposition shows the following equalities:
\begin{eqnarray*}
&& F(2^q+1) = 2^q \quad (q \geq 1), \\
&& F(2^q+2) = 2^q \quad (q \geq 2), \\
&& F(2^q+3) = 2^q \quad (q \geq 2), \\
&& F(2^q+4) = 2^q \quad (q \geq 3).
\end{eqnarray*}

By the use of the above equalities,
we can see many orbits of the $s$-representations which satisfy the
Ferus equality $F(l) = r$ in Table~\ref{table:list of tangentially degenerate orbits}.
For example, the orbits of the $s$-representations of the following symmetric pairs
through a long root satisfy $F(l)=r$: 
\begin{eqnarray*}
&& (\mathfrak{su}(2^{q-1}+2),\ \mathfrak{so}(2^{q-1}+2)) \quad (q \geq 1), \\
&& (\mathfrak{su}(2^{q-2}+2)^2,\ \mathfrak{su}(2^{q-2}+2)) \quad (q \geq 2), \\
&& (\mathfrak{su}(2(2^{q-3}+2)),\ \mathfrak{sp}({2^{q-3}+2})) \quad (q \geq 3), \\
&& (\mathfrak{e}_6,\ \mathfrak{f}_4),\\
&& (\mathfrak{so}(2p+n),\ \mathfrak{so}(p) \oplus \mathfrak{so}(p+n))
\quad (4p+2n-7=2^q+1, p \geq 2, n \geq 1, q \geq 1), \\
&& (\mathfrak{sp}(2^{q-1}+1),\ \mathfrak{u}(2^{q-1}+1)) \quad (q \geq 1), \\
&& (\mathfrak{sp}(2^{q-2}+1)^2,\ \mathfrak{sp}(2^{q-2}+1)) \quad (q \geq 2), \\
&& (\mathfrak{sp}(2(2^{q-3}+1)),\
\mathfrak{sp}(2^{q-3}+1) \oplus \mathfrak{sp}(2^{q-3}+1)) \quad (q \geq 3),\\
&& (\mathfrak{su}(2(2^{q-2}+1)),\
\mathfrak{su}(2^{q-2}+1) \oplus \mathfrak{su}(2^{q-2}+1) \oplus \mathbf{R})
\quad (q \geq 2), \\
&& (\mathfrak{so}(4(2^{q-3}+1)),\ \mathfrak{u}(2(2^{q-3}+1))) \quad (q \geq 3), \\
&& (\mathfrak{e}_7,\ \mathfrak{e}_6 \oplus \mathbf{R}),\\
&& (\mathfrak{so}(2(2^{q-2}+2)),\
\mathfrak{so}(2^{q-2}+2) \oplus \mathfrak{so}(2^{q-2}+2))
\quad (q \geq 2), \\
&& (\mathfrak{so}(2(2^{q-3}+2))^2,\ \mathfrak{so}(2(2^{q-3}+2))) \quad (q \geq 3), \\
&& (\mathfrak{e}_6 \oplus \mathfrak{e}_6,\ \mathfrak{e}_6), \\
&& (\mathfrak{e}_7,\ \mathfrak{su}(8)), \\
&& (\mathfrak{e}_7 \oplus \mathfrak{e}_7,\ \mathfrak{e}_7), \\
&& (\mathfrak{e}_8,\ \mathfrak{so}(16)), \\
&& (\mathfrak{e}_8 \oplus \mathfrak{e}_8,\ \mathfrak{e}_8), \\
&& (\mathfrak{e}_7,\ \mathfrak{su}(2) \oplus \mathfrak{so}(12)), \\
&& (\mathfrak{e}_8,\ \mathfrak{su}(2) \oplus \mathfrak{e}_7), \\
&& (\mathfrak{su}(2p+n),\ \mathfrak{su}(p) \oplus \mathfrak{su}(p+n) \oplus \mathbf{R})
\quad (4p+2n-3=2^q+1, p \geq 2, n \geq 1, q \geq 1), \\
&& (\mathfrak{sp}(2p+n),\ \mathfrak{sp}(p) \oplus \mathfrak{sp}(p+n))
\quad (8p+4n-5=2^q+3, p \geq 2, n \geq 1, q \geq 2).
\end{eqnarray*}
Furthermore the orbits of $s$-representations of symmetric pairs
$$
(\mathfrak{g}_2,\ \mathfrak{so}(4))
\quad \text{and} \quad
(\mathfrak{g}_2 \oplus \mathfrak{g}_2,\ \mathfrak{g}(2))
$$
through a long root or a short root satisfy the Ferus equality
$F(5)=4$ or $F(10)=8$.

\begin{rem}
When $(G, K)$ is of rank $2$,
the results above were studied by Ishikawa, Kimura and Miyaoka
\cite{IKM}.
\end{rem}

\section{Appendix : Quaternionic symmetric spaces}
\setcounter{equation}{0}

A $4n$-dimensional Riemannian manifold is called quaternion-K\"ahler 
if its holonomy group is contained in $Sp(n)\cdot Sp(1)$. 
A quaternion-K\"ahler manifold is called quaternionic symmetric if 
it is a Riemannian symmetric space (\cite[p.~396]{Besse}). 

We will review a construction of a quaternionic symmetric space 
from a compact simple Lie algebra $\mathfrak{g}$ whose rank is 
greater than or equal to $2$ (see \cite{Wolf} in detail). 
Set $G=\mathrm{Int}(\mathfrak{g})$, which is a compact connected semisimple Lie 
group. 
We denote by $\langle\;,\;\rangle$ a biinvariant Riemannian metric on $G$. 
Take a maximal torus $T$ in $G$ and denote its Lie algebra by $\mathfrak{t}$. 
For $\alpha \in \mathfrak{t}$
we set $\tilde{\mathfrak g}_\alpha$ as (\ref{eq:root space}),
and define root system $\tilde R$ by (\ref{eq:root system}).
We have then 
$$
{\mathfrak g}^{\mathbf{C}}={\mathfrak t}^{\mathbf{C}}
+\sum_{\alpha \in \tilde{R}}\tilde{\mathfrak g}_\alpha .
$$
For $\alpha \in \tilde{R}$ we can take $E_\alpha \in \tilde{\mathfrak g}_\alpha$ 
such that 
\begin{eqnarray*}
&& E_\alpha - E_{-\alpha} \in \mathfrak{g}, \quad
\sqrt{-1}(E_\alpha + E_{-\alpha}) \in \mathfrak{g}, \quad
[E_\alpha, E_{-\alpha}] = -\sqrt{-1} \alpha, \\
&& \left\| \frac{1}{\sqrt{2}}(E_\alpha - E_{-\alpha}) \right\|
= \left\| \frac{\sqrt{-1}}{\sqrt{2}}(E_\alpha + E_{-\alpha}) \right\| = 1,
\end{eqnarray*}
and that if we define $N_{\alpha ,\beta}$ by 
$[E_\alpha ,E_\beta ]=N_{\alpha ,\beta}E_{\alpha +\beta}$, then 
$N_{\alpha ,\beta }=-N_{-\alpha ,-\beta}$ where we put 
$N_{\alpha ,\beta}=0$ if $\alpha +\beta\not\in\tilde{R}$. 
Let $\tilde F$ be a fundamental system of $\tilde{R}$ and denote by $\tilde{R}_+$ 
the set of positive roots with respect to $\tilde F$. 
For $\alpha \in \tilde{R}_+$ set
$$
F_\alpha =\frac{1}{\sqrt{2}}(E_\alpha -E_{-\alpha}),\quad 
G_\alpha =\frac{\sqrt{-1}}{\sqrt{2}}(E_\alpha +E_{-\alpha}), 
$$
then we have 
\begin{equation}
\label{eqn:rootspace}
\mathfrak g ={\mathfrak t}+
\sum_{\alpha \in \tilde{R}_+}(\mathbf{R}F_\alpha + \mathbf{R}G_\alpha ),\quad
\|F_\alpha \|=\|G_\alpha \|=1,\quad 
[F_\alpha ,G_\alpha ]=\alpha .
\end{equation}
For each $\alpha \in \tilde{R}_+$,
we define a subalgebra $\mathfrak g (\alpha)$ of $\mathfrak g$ by
$$
\mathfrak g (\alpha )=
\mathbf{R}\alpha + \mathfrak g \cap 
(\tilde{\mathfrak g}_\alpha + \tilde{\mathfrak g}_{-\alpha})
=\mathbf{R}\alpha +\mathbf{R}F_\alpha 
+\mathbf{R}G_\alpha ,
$$
which is isomorphic to $\mathfrak{su}(2)$. 
We denote the highest root by $\delta \in \tilde{R}_+$. 
By Lemma \ref{lem:Wolf},
$$
s=\exp\mathrm{ad}\left(\frac{2\pi}{\|\delta \|^2}\delta\right)
$$
is an involutive automorphism of $\mathfrak g$. 
The fixed points set $\mathfrak k$ of $s$ in $\mathfrak g$ is given by 
\begin{eqnarray*}
\mathfrak k &=& \mathfrak t + \mathbf{R}F_\delta +
\mathbf{R}G_\delta +
\sum_{\alpha \perp \delta}(\mathbf{R}F_\alpha +
\mathbf{R}G_\alpha )\\
&=& \mathfrak g (\delta) + \mathfrak t \cap \delta^\perp
+ \sum_{\alpha\perp\delta}(\mathbf{R}F_\alpha +\mathbf{R}G_\alpha ).
\end{eqnarray*}
The subalgebras $\mathfrak g (\delta)$ and
$\mathfrak t \cap \delta^\perp + \sum_{\alpha\perp\delta}
(\mathbf{R}F_\alpha +\mathbf{R}G_\alpha)$
are ideals of $\mathfrak k$. 
The $(-1)$-eigenspace $\mathfrak m$ of $s$ is given by 
$$
\mathfrak m =\sum_{\alpha\in \tilde{R}^{\mathfrak m}_+}
(\mathbf{R}F_\alpha +\mathbf{R}G_\alpha )
\quad\mbox{where}\quad
\tilde{R}^{\mathfrak m}_+
= \left\{
\alpha \in \tilde{R}_+\;
\left|\;
\frac{\langle\alpha, \delta\rangle}{\|\delta\|^2} = \frac{1}{2}
\right.
\right\}.
$$
Since there exists a subset $\tilde{R}_+(\delta )$ in 
$\tilde{R}^{\mathfrak m}_+$ such that 
\begin{equation}
\label{eqn:quaternion}
\mathfrak m =\sum_{\alpha\in\tilde{R}_+(\delta )}
(\mathbf{R}F_\alpha +\mathbf{R}G_\alpha +
\mathbf{R}F_{\delta -\alpha}
+\mathbf{R}G_{\delta -\alpha}),
\end{equation}
the dimension of $\mathfrak m$ is a multiple of $4$.

We also denote by $s$ the involutive automorphism 
of $G$ induced from $s$. 
Since the fixed point set of $s$ in $G$ is closed and $G$ is compact, 
the identity component $K$ of the fixed points set is also compact. 
The Lie algebra of $K$ coincides with $\mathfrak k$ and $(G,K)$ is a 
compact symmetric pair. 
Hence the coset manifold $G/K$ is a compact Riemannian symmetric space. 
Moreover $G/K$ is a quaternionic symmetric space since (\ref{eqn:quaternion}) 
defines a quaternionic structure. 
Conversely it is known that 
every compact quaternionic symmetric space is obtained in this way. 
We omit its proof. 
See \cite{Wolf} in detail. 

Quaternionic symmetric spaces have a similar property with 
Hermitian symmetric spaces as we shall mention below: 
Two roots $\gamma_1,\gamma_2\in \tilde{R}_+(\delta )$ are 
said to be {\it strongly orthogonal} if $\gamma_1\pm\gamma_2\not\in\tilde{R}$. 

\begin{pro} \label{pro:sos}
Let $G/K$ be a compact quaternionic symmetric space of rank $p$. 
Then there exist $\tilde R_+(\delta)$ which satisfies (\ref{eqn:quaternion})
and a subset $\{ \gamma_i \}_{1 \leq i \leq p}$ of 
$\tilde{R}_+(\delta)$ consisting of strongly orthogonal roots 
such that 
$$
\mathfrak{a} = \sum_{i=1}^p \mathbf{R}F_{\gamma_i}
$$
is a maximal abelian subspace of $\mathfrak{m}$. 
\end{pro}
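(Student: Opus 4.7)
The plan is to construct the strongly orthogonal roots $\{\gamma_i\}_{1\le i\le p}$ by a Harish-Chandra-type cascade inside $\tilde R^{\mathfrak m}_+$, and then to choose the system of representatives $\tilde R_+(\delta)$ so that it contains all the $\gamma_i$ and still satisfies (\ref{eqn:quaternion}).

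First I would set up the cascade. Let $\gamma_1$ be a highest element of $\tilde R^{\mathfrak m}_+$ with respect to the lexicographic ordering; then $F_{\gamma_1}\in\mathfrak m$ by definition of $\tilde R^{\mathfrak m}_+$. Inductively, once strongly orthogonal roots $\gamma_1,\ldots,\gamma_k\in\tilde R^{\mathfrak m}_+$ have been chosen, pick $\gamma_{k+1}$ to be a highest root of the (possibly empty) set of roots in $\tilde R^{\mathfrak m}_+$ which are strongly orthogonal to every $\gamma_i$. The abelian property of $\mathfrak a':=\sum_{i=1}^k\mathbf R F_{\gamma_i}$ is immediate from strong orthogonality: expanding
$$
[F_{\gamma_i},F_{\gamma_j}]=\tfrac12\bigl([E_{\gamma_i},E_{\gamma_j}]-[E_{\gamma_i},E_{-\gamma_j}]-[E_{-\gamma_i},E_{\gamma_j}]+[E_{-\gamma_i},E_{-\gamma_j}]\bigr),
$$
each bracket vanishes since $\pm\gamma_i\pm\gamma_j\notin\tilde R$.

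The crucial step is to show that the cascade terminates with exactly $p$ roots. The inequality $k\le p$ is immediate because $\mathfrak a'\subset\mathfrak m$ is abelian and $p=\mathrm{rank}(G/K)$ bounds the dimension of any abelian subspace of $\mathfrak m$. For the opposite inequality I would show that when the cascade stops, $\mathfrak a'$ is already a maximal abelian subspace of $\mathfrak m$. Suppose $X\in\mathfrak m$ satisfies $[X,F_{\gamma_i}]=0$ for all $i$; expand $X$ in the basis (\ref{eqn:rootspace}) restricted to $\tilde R^{\mathfrak m}_+$. The commutation conditions with each $F_{\gamma_i}$ link the coefficients of $F_\alpha,G_\alpha$ along root strings through $\gamma_i$; a careful analysis (reducing along each $\mathfrak g(\gamma_i)\cong\mathfrak{su}(2)$ block) shows that either $X\in\mathfrak a'$, or one produces a non-zero root vector in $\tilde R^{\mathfrak m}_+$ strongly orthogonal to all $\gamma_i$, contradicting the stopping condition of the cascade. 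Hence $\dim\mathfrak a'=k=p$ and $\mathfrak a'$ is maximal abelian.

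Finally I would construct $\tilde R_+(\delta)$. The involution $\iota:\alpha\mapsto\delta-\alpha$ preserves $\tilde R^{\mathfrak m}_+$ and, as noted after (\ref{eqn:quaternion}) (using that $\delta=2\alpha$ is impossible for a simple Lie algebra of rank $\ge 2$), has no fixed points. Strong orthogonality of the $\gamma_i$ yields $\gamma_i+\gamma_j\notin\tilde R$ for all $i,j$, and in particular $\gamma_i+\gamma_j\neq\delta$, so no two of the chosen roots are paired by $\iota$. Therefore a set $\tilde R_+(\delta)$ of $\iota$-orbit representatives can be chosen to contain all $\gamma_i$, which gives the decomposition (\ref{eqn:quaternion}) with $\mathfrak a=\sum_{i=1}^p\mathbf R F_{\gamma_i}$ as required.

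The main obstacle will be the termination/maximality argument in the third paragraph: verifying that the cascade does not halt prematurely amounts to an inductive reduction to the orthogonal complement of $\mathfrak g(\gamma_1)$, which one must check remains a (direct sum of) quaternionic symmetric pair(s) of rank $p-1$. If a direct Lie-algebraic argument proves delicate, a fallback is to verify the statement case-by-case using the classification of compact quaternionic symmetric spaces and Wolf's construction recalled earlier in this appendix.
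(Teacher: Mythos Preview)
Your cascade construction is exactly the paper's approach (the paper chooses the \emph{lowest} root at each stage rather than the highest, but either extremal choice works for the same reason). The gap you correctly flag in the maximality step is closed not by reducing to a smaller quaternionic symmetric pair or by case analysis, but by a single structural observation you have not used: Lemma~\ref{lem:Wolf} applied to $\alpha,\beta\in\tilde R^{\mathfrak m}_+$ shows that if $\alpha+\beta\in\tilde R$ then necessarily $\alpha+\beta=\delta$. Consequently, in the expansion of $[X,F_{\gamma}]$ with $X=\sum_{\beta}(c_\beta E_\beta+c_{-\beta}E_{-\beta})$ and $\gamma$ the extremal root of the current stage, every nonvanishing bracket $[E_{\pm\beta},E_{\pm\gamma}]$ lands in $\tilde{\mathfrak g}_{\pm\delta}$ or in $\tilde{\mathfrak g}_{\pm(\beta-\gamma)}$ with $\langle\beta-\gamma,\delta\rangle=0$; these target root spaces are pairwise distinct (extremality of $\gamma$ rules out $\beta_1+\beta_2=2\gamma$, and $\beta-\gamma\neq\pm\delta$ since it is orthogonal to $\delta$). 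Hence the vanishing of $[X,F_\gamma]$ kills each coefficient $c_{\pm\beta}$ with $\beta\pm\gamma\in\tilde R$ \emph{individually}, and what survives is supported on roots strongly orthogonal to $\gamma$. Iterating through $\gamma_1,\dots,\gamma_s$ leaves $X\in\mathfrak a'$ by the stopping condition of the cascade. This is the precise content of the paper's centralizer computation (Lemma~\ref{lem:centralizer}), and it replaces your ``careful analysis along $\mathfrak g(\gamma_i)$-blocks'' with a two-line root-space bookkeeping.

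Your final paragraph on choosing $\tilde R_+(\delta)$ to contain all the $\gamma_i$ matches the paper verbatim.
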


The proof requires some preparation.

\begin{lem} \label{lem:sos}
If $\alpha, \beta \in \tilde{R}^{\mathfrak m}_+$ and 
$\alpha +\beta\in\tilde{R}$, then $\alpha +\beta =\delta$. 
\end{lem}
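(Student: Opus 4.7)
The plan is a short computation using linearity of the inner product together with Lemma~\ref{lem:Wolf}. Assume $\alpha, \beta \in \tilde R^{\mathfrak m}_+$, so by the definition of $\tilde R^{\mathfrak m}_+$ we have
$$
\frac{\langle \alpha, \delta\rangle}{\|\delta\|^2} = \frac{\langle \beta, \delta\rangle}{\|\delta\|^2} = \frac{1}{2}.
$$
Suppose moreover that $\alpha + \beta \in \tilde R$. Since $\alpha, \beta \in \tilde R_+$, the sum $\alpha+\beta$ is also in $\tilde R_+$. By linearity,
$$
\frac{\langle \alpha+\beta, \delta\rangle}{\|\delta\|^2}
= \frac{\langle \alpha, \delta\rangle}{\|\delta\|^2}
+ \frac{\langle \beta, \delta\rangle}{\|\delta\|^2} = 1.
$$
Now I would invoke Lemma~\ref{lem:Wolf}: although that lemma is stated for the restricted root system $R$, its proof uses only general properties of root systems (the highest root, the $\delta$-series through a given root, and Cauchy's inequality), hence it applies verbatim to the root system $\tilde R$ with highest root $\delta$. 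In particular, the only $\lambda \in \tilde R_+$ with $\langle \lambda, \delta\rangle/\|\delta\|^2 = 1$ is $\lambda = \delta$ itself. Applying this to $\lambda = \alpha + \beta$ gives $\alpha + \beta = \delta$, as desired.

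There is no substantive obstacle: the only subtlety is making sure Lemma~\ref{lem:Wolf} is invoked for the ambient root system $\tilde R$ rather than the restricted root system $R$. Since the statement and its proof are identical in content, one simply notes this and applies it.
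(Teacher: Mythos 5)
Your proof is correct and is essentially identical to the paper's: both compute $\langle\alpha+\beta,\delta\rangle/\|\delta\|^2=1$ by linearity and then apply Lemma~\ref{lem:Wolf} to conclude $\alpha+\beta=\delta$. Your remark that Lemma~\ref{lem:Wolf} must be read for the ambient root system $\tilde R$ rather than $R$ is a valid point of care, and the paper implicitly uses it the same way throughout the appendix.
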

\begin{proof} Since $\alpha ,\beta\in \tilde{R}^{\mathfrak m}_+$, we have
$$
\frac{\langle\alpha +\beta ,\delta\rangle}{\|\delta\|^2}=1.
$$
Using Lemma \ref{lem:Wolf}, 
$\alpha +\beta\in\tilde{R}$ implies $\alpha +\beta =\delta$. 
\end{proof}

\begin{cor} \label{cor:sos}
$[\tilde{\mathfrak g}_\alpha, \tilde{\mathfrak g}_\beta]
\subset \tilde{\mathfrak g}_\delta$ 
for $\alpha ,\beta\in \tilde{R}^{\mathfrak m}_+$.
\end{cor}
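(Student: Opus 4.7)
The plan is to deduce the bracket inclusion from Lemma~\ref{lem:sos} together with the standard grading property of root space decompositions. Recall that for any $\alpha,\beta$ in the Cartan subalgebra (extended to $\mathfrak t$-weights), one has $[\tilde{\mathfrak g}_\alpha,\tilde{\mathfrak g}_\beta]\subset \tilde{\mathfrak g}_{\alpha+\beta}$, where by convention $\tilde{\mathfrak g}_{\alpha+\beta}=\{0\}$ whenever $\alpha+\beta\notin\tilde R\cup\{0\}$. This is immediate from the Jacobi identity applied to $[H,[E_\alpha,E_\beta]]$ using the eigenvector definition (\ref{eq:root space}) of $\tilde{\mathfrak g}_\alpha$ and $\tilde{\mathfrak g}_\beta$.

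With that general fact in hand, I would split the argument into cases on whether $\alpha+\beta$ is a root. First, if $\alpha+\beta\notin\tilde R$, then $[\tilde{\mathfrak g}_\alpha,\tilde{\mathfrak g}_\beta]=\{0\}\subset\tilde{\mathfrak g}_\delta$ and there is nothing to prove. (Note that $\alpha+\beta\neq 0$ since $\alpha,\beta\in\tilde R_+^{\mathfrak m}$ are both positive.) Second, if $\alpha+\beta\in\tilde R$, then Lemma~\ref{lem:sos} applies, forcing $\alpha+\beta=\delta$, and hence $[\tilde{\mathfrak g}_\alpha,\tilde{\mathfrak g}_\beta]\subset\tilde{\mathfrak g}_{\alpha+\beta}=\tilde{\mathfrak g}_\delta$. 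Combining the two cases yields the desired inclusion.

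There is essentially no obstacle here: the entire content of the corollary is already contained in Lemma~\ref{lem:sos}, and the grading property of the complex root space decomposition is a standard structural fact that requires no additional input specific to the quaternionic symmetric setting. The only care needed is to explicitly note the conventional vanishing $\tilde{\mathfrak g}_{\alpha+\beta}=\{0\}$ when $\alpha+\beta$ is not a root, so that the statement is uniformly valid without a case distinction in the final inclusion.
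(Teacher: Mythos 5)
Your proposal is correct and follows essentially the same route as the paper: both split on whether $\alpha+\beta\in\tilde R$, invoke Lemma~\ref{lem:sos} to force $\alpha+\beta=\delta$ in the affirmative case, and use the grading $[\tilde{\mathfrak g}_\alpha,\tilde{\mathfrak g}_\beta]\subset\tilde{\mathfrak g}_{\alpha+\beta}$ (vanishing when $\alpha+\beta$ is not a root) in the other. Your explicit remark that $\alpha+\beta\ne 0$ because both roots are positive is a small point the paper leaves tacit, but otherwise the arguments coincide.
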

\begin{proof} 
If $\alpha + \beta \in \tilde{R}$, Lemma~\ref{lem:sos} implies 
$[\tilde{\mathfrak g}_\alpha, \tilde{\mathfrak g}_\beta]
= \tilde{\mathfrak g}_\delta$.
If $\alpha +\beta\not\in\tilde{R}$, then 
$[\tilde{\mathfrak g}_\alpha, \tilde{\mathfrak g}_\beta ]=\{0\}$. 
\end{proof}

If $Q$ is any subset of $\tilde{R}^{\mathfrak m}_+$, let
$$
\mathfrak m_Q
= \sum_{\alpha \in Q} (\tilde{\mathfrak g}_\alpha + \tilde{\mathfrak g}_{-\alpha}).
$$
Remark that $\mathfrak m_{\tilde{R}^{\mathfrak m}_+} 
= \mathfrak m^{\mathbf C}$. 
For the lowest root $\gamma$ in $Q$, put
$$
Q(\gamma)
= \{\beta \in Q - \{\gamma\} \mid \beta \pm \gamma \notin \tilde{R}\}.
$$
Then $\beta \pm \gamma \notin \tilde{R} \cup \{0\}$ for $\beta \in Q(\gamma)$. 

\begin{lem} \label{lem:centralizer}
We denote by $\mathfrak z_{\mathfrak m_Q}(E_\gamma + E_{-\gamma})$ 
the centralizer of $E_\gamma + E_{-\gamma}$ 
in $\mathfrak m_Q$. 
Then 
$$
\mathfrak z_{\mathfrak m_Q}(E_\gamma + E_{-\gamma})
= \mathfrak m_{Q(\gamma)} + \mathbf C(E_\gamma + E_{-\gamma}).
$$
\end{lem}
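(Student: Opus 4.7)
The plan is to expand an arbitrary $X \in \mathfrak{z}_{\mathfrak m_Q}(E_\gamma + E_{-\gamma})$ in the root-vector basis as $X = \sum_{\alpha \in Q}(c_\alpha E_\alpha + c'_\alpha E_{-\alpha})$, impose $[E_\gamma + E_{-\gamma}, X] = 0$, and read off the vanishing of the offending coefficients by examining one root-space component of the bracket at a time. The key structural input is that $\gamma$ is the lowest root of $Q$, which will prevent dangerous cross-terms from appearing.

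The inclusion $\supset$ is immediate: $E_\gamma + E_{-\gamma}$ centralizes itself, and for $\beta \in Q(\gamma)$ the defining relations $\beta \pm \gamma \notin \tilde R$ give $[E_{\pm\gamma}, E_{\pm\beta}] = 0$, so $\tilde{\mathfrak g}_\beta + \tilde{\mathfrak g}_{-\beta}$ lies in the centralizer. For the reverse inclusion, the $\alpha = \gamma$ summand contributes $[E_\gamma + E_{-\gamma},\, c_\gamma E_\gamma + c'_\gamma E_{-\gamma}] = \sqrt{-1}(c_\gamma - c'_\gamma)\gamma$ to $\mathfrak{t}^{\mathbf C}$; vanishing forces $c_\gamma = c'_\gamma$ and leaves precisely $c_\gamma(E_\gamma + E_{-\gamma})$.

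For $\alpha \in Q - \{\gamma\}$ with $\alpha + \gamma \in \tilde R$, Lemma~\ref{lem:sos} forces $\alpha + \gamma = \delta$, and the $E_\delta$-component of $[E_\gamma + E_{-\gamma}, X]$ admits only one source, namely $c_\alpha N_{\gamma, \alpha} E_\delta$: any alternative would require some $\pm\alpha' \in Q$ with $\pm\alpha' \pm \gamma = \delta$, which the height-maximality of $\delta$ excludes. Hence $c_\alpha = 0$, and symmetrically the $E_{-\delta}$-component yields $c'_\alpha = 0$. For $\alpha \in Q - \{\gamma\}$ with $\alpha - \gamma \in \tilde R$, set $\mu = \alpha - \gamma > 0$; the $E_\mu$-component receives a priori two contributions, $c_\alpha N_{-\gamma, \alpha} E_\mu$ from $[E_{-\gamma}, E_\alpha]$ and $c'_{2\gamma - \alpha} N_{\gamma,\, -(2\gamma - \alpha)} E_\mu$ from $[E_\gamma, E_{-(2\gamma - \alpha)}]$ in case $2\gamma - \alpha \in Q$. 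The lowest-root hypothesis on $\gamma$ rules out the latter: since $\alpha > \gamma$ in the lex order, $2\gamma - \alpha < \gamma$, so $2\gamma - \alpha$ is either a negative root or a positive root strictly below the minimum of $Q$, hence $2\gamma - \alpha \notin Q$. The surviving relation forces $c_\alpha = 0$, and symmetrically $c'_\alpha = 0$ from $E_{-\mu}$.

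Combining these three steps kills $c_\alpha$ and $c'_\alpha$ for every $\alpha \in Q - \{\gamma\} - Q(\gamma)$, so what remains of $X$ lies in $\mathbf{C}(E_\gamma + E_{-\gamma}) + \mathfrak{m}_{Q(\gamma)}$. The main obstacle, and the reason the lowest-root hypothesis on $\gamma$ cannot be dropped, is precisely the potential cross-coupling in the $E_\mu$-component between the $\alpha$- and $(2\gamma - \alpha)$-summands; without that hypothesis the resulting linear system admits non-trivial solutions and the centralizer would be strictly larger than $\mathbf{C}(E_\gamma + E_{-\gamma}) + \mathfrak{m}_{Q(\gamma)}$.
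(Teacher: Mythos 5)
Your proof is correct and follows essentially the same route as the paper's: expand $X$ in root vectors, extract the $\mathfrak t^{\mathbf C}$-component to force $c_\gamma=c_{-\gamma}$, use Lemma~\ref{lem:sos} to confine the $\alpha+\gamma$ contributions to $\tilde{\mathfrak g}_{\pm\delta}$, and use the lowest-root property of $\gamma$ to keep the $\tilde{\mathfrak g}_{\pm(\alpha-\gamma)}$ components from coupling. The paper merely organizes the same component analysis by listing all four bracket types at once and observing that the relevant root spaces are pairwise distinct (using $\langle\alpha-\gamma,\delta\rangle=0$ and positivity), rather than fixing a target root space and enumerating its sources as you do.
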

\begin{proof}
Since $\beta \pm \gamma \notin \tilde{R}\cup \{0\}$ for $\beta \in Q(\gamma)$, 
we have
$$
[\mathfrak m_{Q(\gamma)},\; \tilde{\mathfrak g}_\gamma + \tilde{\mathfrak g}_{-\gamma}]
= \left[
\sum_{\beta \in Q(\gamma)} (\tilde{\mathfrak g}_\beta + \tilde{\mathfrak g}_{-\beta}),\;
\tilde{\mathfrak g}_\gamma + \tilde{\mathfrak g}_{-\gamma}
\right]
= \{0\}.
$$
Since $E_\gamma + E_{-\gamma}
\in \tilde{\mathfrak g}_\gamma + \tilde{\mathfrak g}_{-\gamma}$, 
we get 
$$
[\mathfrak m_{Q(\gamma)}, E_\gamma + E_{-\gamma}]
= \{0\}.
$$
Hence we have 
$$
\mathfrak m_{Q(\gamma)} + \mathbf C(E_\gamma + E_{-\gamma})
\subset \mathfrak z_{\mathfrak m_Q}(E_\gamma + E_{-\gamma}).
$$
Conversely let 
$X$ be in $\mathfrak z_{\mathfrak m_Q}(E_\gamma + E_{-\gamma})$. 
Since $X \in \mathfrak m_Q$, we can express $X$ as 
$$
X = c_\gamma E_\gamma + c_{-\gamma} E_{-\gamma}
+ \sum_{\beta \in Q'} (c_\beta E_\beta + c_{-\beta} E_{-\beta})\quad 
\mbox{where}\quad Q' = Q - \{\gamma\}.
$$
We consider the components of $[X, E_\gamma + E_{-\gamma}] = 0$ 
in the root space decomposition. 
Since the $\mathfrak t^{\mathbf C}$-component is 
$$
c_\gamma [E_\gamma, E_{-\gamma}] + c_{-\gamma} [E_{-\gamma}, E_\gamma]
= (c_\gamma - c_{-\gamma}) [E_\gamma, E_{-\gamma}],
$$
we have $c_\gamma = c_{-\gamma}$, which implies that
$$
X = c_\gamma (E_\gamma + E_{-\gamma})
+ \sum_{\beta \in Q'} (c_\beta E_\beta + c_{-\beta} E_{-\beta}).
$$
Put 
$$
Y = \sum_{\beta \in Q'} (c_\beta E_\beta + c_{-\beta} E_{-\beta}),
$$
then 
$X = c_\gamma (E_\gamma + E_{-\gamma}) + Y$ and 
\begin{eqnarray*}
0 & = &
[X, E_\gamma + E_{-\gamma}]
= [Y, E_\gamma + E_{-\gamma}] \\
& = &
\sum_{\beta \in Q'}
(c_\beta [E_\beta, E_\gamma] + c_\beta [E_\beta, E_{-\gamma}]
+ c_{-\beta} [E_{-\beta}, E_\gamma] + c_{-\beta} [E_{-\beta}, E_{-\gamma}]).
\end{eqnarray*}
Here $[E_\beta, E_\gamma] \in \tilde{\mathfrak g}_\delta$ and 
$[E_{-\beta}, E_{-\gamma}] \in \tilde{\mathfrak g}_{-\delta}$ 
by Corollary~\ref{cor:sos}. 
Since $\beta, \gamma \in \tilde{R}^{\mathfrak m}_+$, we have 
$\langle\beta - \gamma, \delta\rangle = 0$. 
Clearly we get 
$[E_\beta, E_{-\gamma}] \in \tilde{\mathfrak g}_{\beta-\gamma}$ and 
$[E_{-\beta}, E_\gamma] \in \tilde{\mathfrak g}_{-\beta+\gamma}$. 
Since $\gamma$ is the lowest root in $Q$, we have 
$\beta - \gamma > 0$ for $\beta \in Q'$ and $-\beta + \gamma < 0$, 
which implies that 
$\beta - \gamma \ne \delta,\; -\beta + \gamma \ne \delta$. 
Hence, if $\beta - \gamma \in \tilde{R}$, then 
$c_\beta = 0$ and $c_{-\beta} = 0$.
If $\beta + \gamma \in \tilde{R}$, then $\beta = \delta - \gamma$ 
by Lemma~\ref{lem:sos}. 
In this case, $c_\beta = 0$ and $c_{-\beta} = 0$. 
Hence we get
$$
Y = \sum_{\beta \in Q(\gamma)}
(c_\beta E_\beta + c_{-\beta} E_{-\beta})
\in \mathfrak m_{Q(\gamma)}.
$$
Therefore we get the assertion. 
\end{proof}

\begin{proof}[Proof of Proposition \ref{pro:sos}]
We inductively define a sequence of subsets 
$$
\tilde{R}^{\mathfrak m}_+ = Q_1 \supsetneq Q_2 \supsetneq \cdots
\supsetneq Q_s \supsetneq Q_{s+1} = \emptyset
$$
as follows: 
Let $\gamma_i$ be the lowest root in $Q_i$ and set $Q_{i+1} = Q_i(\gamma_i)$. 
Since the cardinal numbers of $\{Q_i\}$ are strictly monotone decreasing, 
the operation is finished at finitely many times. 
Hence we can define $\gamma_1, \dots, \gamma_s \in \tilde{R}^{\mathfrak m}_+$. 
Set 
$$
\tilde{\mathfrak b}
= \sum_{i=1}^s \mathbf C(E_{\gamma_i} + E_{-\gamma_i})
\subset \mathfrak m^{\mathbf C}.
$$
We shall show that $\tilde{\mathfrak b}$ is a maximal abelian 
subspace of $\mathfrak m^{\mathbf C}$. 
By the definition of $\gamma_i$, two distinct roots $\gamma_i$ and 
$\gamma_j$ are strongly orthogonal. 
In particular $\tilde{\mathfrak b}$ is an abelian subspace. 
In order to prove the maximality of $\tilde{\mathfrak b}$, 
set $\mathfrak m_i = \mathfrak m_{Q_i}$ and define a sequence of 
subspaces in $\mathfrak m^{\mathbf C}$ by 
$$
\mathfrak m^{\mathbf C} = \mathfrak m_1
= \mathfrak m_1 + \tilde{\mathfrak b}
\supset \mathfrak m_2 + \tilde{\mathfrak b}
\supset \cdots
\supset \mathfrak m_s + \tilde{\mathfrak b}
\supset \mathfrak m_{s+1} + \tilde{\mathfrak b}
= \tilde{\mathfrak b}.
$$
We shall show that if $X\in \mathfrak m^{\mathbf C}$ satisfies 
$[X,\tilde{\mathfrak b}]=\{0\}$, then $X\in \tilde{\mathfrak b}$. 
In order to prove this, it is sufficient to show that 
if $X \in \mathfrak m_p + \tilde{\mathfrak b}$ then 
$X \in \mathfrak m_{p+1} + \tilde{\mathfrak b}$. 
We can express $X \in \mathfrak m_p +\tilde{\mathfrak b}$ as 
$$
X=Y+Z\qquad (Y\in \mathfrak m_p,\; Z\in \tilde{\mathfrak b}).
$$
Since $[X,\tilde{\mathfrak b}]=\{0\}$, we have 
$$
0=[X,E_{\gamma_p}+E_{-\gamma_p}]=[Y,E_{\gamma_p}+E_{-\gamma_p}],
$$
which implies that 
$Y \in \mathfrak z_{\mathfrak m_p}(E_{\gamma_p}+E_{-\gamma_p})
= \mathfrak m_{p+1} + \mathbf C (E_{\gamma_p}+E_{-\gamma_p})$ 
by Lemma~\ref{lem:centralizer}.
Hence $X=Y+Z$ is in $\mathfrak m_{p+1} + \tilde{\mathfrak b}$.

Since $\gamma_i + \gamma_j \neq \delta$,
we can take a subset $\tilde R_+(\delta)$ which satisfies (\ref{eqn:quaternion})
and contains $\{ \gamma_i \}_{1 \leq i \leq p}$.

\end{proof}

Hence $\mathfrak{m}$ is given by the following:
\begin{eqnarray*}
\mathfrak{m}&=&\mathfrak{a}+ 
\sum_{i=1}^p (\mathbf{R}G_{\gamma_i}+ 
\mathbf{R}F_{\delta -\gamma_i}+
\mathbf{R}G_{\delta -\gamma_i})\\
& &+\sum_{\alpha\in \tilde{R}_+(\delta )-\{\gamma_1,\cdots ,\gamma_p\}}
(\mathbf{R}F_\alpha +\mathbf{R}G_\alpha +
\mathbf{R}F_{\delta -\alpha} +
\mathbf{R}G_{\delta -\alpha} )
\end{eqnarray*}
When the root system of $G$ is not of type $G_2$, then 
$\|\gamma_1\|=\cdots =\|\gamma_p\|$. 
Set
$$
\mathfrak{b}=\mathfrak{t}\cap\{\gamma_1,\cdots ,\gamma_p\}^\perp ,\quad 
\mathfrak{t}'=\mathfrak{a}+\mathfrak{b},
$$
then $\mathfrak{t}'$ is a maximal abelian subalgebra of $\mathfrak{g}$ 
containing $\mathfrak{a}$. 
We define the Cayley transform $\Phi$ by 
$$
\Phi =
\exp\frac{\pi}{2}\mathrm{ad}\left(\sum_{j=1}^p\frac{G_{\gamma_j}}{\|\gamma_j\|}
\right)\in\mathrm{Aut}(\mathfrak{g}),
$$
and set $\lambda_i=\|\gamma_i\|F_{\gamma_i}$, then 
$$
\Phi (\gamma_i)=\lambda_i,\quad \Phi (H)=H \quad (H\in\mathfrak{b}).
$$
Hence the Cayley transform $\Phi$ maps $\mathfrak{t}$ onto $\mathfrak{t}'$. 
We denote by $R$ the restricted root system of $(G,K)$ with respect to 
$\mathfrak{a}$. 
Let 
$\pi :\mathfrak{t}'=\mathfrak{a}+\mathfrak{b}\rightarrow \mathfrak{a}$ 
be the orthogonal projection, then $R=\pi (\Phi (\tilde{R}))$. 
Since
$$
\alpha \equiv
\sum_{i=1}^p\frac{\langle\alpha ,\gamma_i\rangle}{\|\gamma_i\|^2}\gamma_i
\quad \mod \;\mathfrak{b}\quad\mbox{for}\quad \alpha\in\tilde{R},
$$
we have
$$
\Phi (\alpha )\equiv 
\sum_{i=1}^p\frac{\langle\alpha ,\gamma_i\rangle}{\|\gamma_i\|^2}\lambda_i
\quad \mod \;\mathfrak{b},
$$
which implies that
\begin{equation}
\label{eqn:projection}
\pi (\Phi (\alpha ))=
\sum_{i=1}^p\frac{\langle\alpha ,\gamma_i\rangle}{\|\gamma_i\|^2}\lambda_i.
\end{equation}
In particular
$$
\{\lambda_1,\cdots ,\lambda_p\}\subset
R=\left\{\left.
\sum_{i=1}^p\frac{\langle\alpha ,\gamma_i\rangle}{\|\gamma_i\|^2}\lambda_i\;
\right|\;\alpha\in\tilde{R}
\right\}.
$$
The multiplicity $m(\lambda )$ of 
$\lambda =\pi (\Phi (\alpha ))\in\Sigma\; (\alpha\in\tilde{R})$ is 
given by 
$$
m(\lambda )=\#\{\beta\in\tilde{R}\mid 
\langle\alpha ,\gamma_i\rangle =\langle\beta ,\gamma_i\rangle\}.
$$
By (\ref{eqn:projection}), we have
$$
\left\|\pi (\Phi (\alpha ))\right\|^2
=\sum_{i=1}^p\left\langle\alpha ,\frac{\gamma_i}{\|\gamma_i\|}\right\rangle^2
\leq\|\alpha\|^2,
$$
and the equality holds if and only if 
$\alpha\in\mathrm{span}\{\gamma_1,\cdots ,\gamma_p\}$.
Hence $\|\pi (\Phi (\alpha ))\|^2=\|\alpha\|^2$ for any 
$\alpha\in \tilde{R}$ if and only if $p=\mathrm{rank}(G)$. 

\begin{lem}
$\mathbf{R}G_{\gamma_i}\subset \mathfrak{m}_{\lambda_i},\quad 
\mathbf{R}\gamma_i\subset \mathfrak{k}_{\lambda_i}$.
\end{lem}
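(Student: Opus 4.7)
The plan is to verify directly from the definitions that $G_{\gamma_i}$ and $\gamma_i$ satisfy the two conditions cutting $\mathfrak{m}_{\lambda_i}$ and $\mathfrak{k}_{\lambda_i}$ out of $\mathfrak{g}^{\mathbf{C}}$: membership in $\mathfrak{m}$ (resp.\ $\mathfrak{k}$), and membership in $\mathfrak{g}_{\lambda_i}+\mathfrak{g}_{-\lambda_i}$. The ambient memberships are immediate. Since $s$ acts trivially on $\mathfrak{t}$, we have $\gamma_i\in\mathfrak{t}\subset\mathfrak{k}$. Since $\gamma_i\in\tilde{R}^{\mathfrak{m}}_+$, the definition of $s$ gives $s\cdot E_{\pm\gamma_i}=-E_{\pm\gamma_i}$, hence $G_{\gamma_i}\in\mathfrak{m}$.

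For the root-space condition, the main input is that the $\gamma_j$ are pairwise strongly orthogonal by Proposition~\ref{pro:sos}, hence pairwise orthogonal by the standard $\mathfrak{sl}(2)$-string argument applied to $\alpha,\beta\in\tilde{R}$ with $\alpha\pm\beta\notin\tilde{R}$. Consequently, for $j\neq i$ both $[F_{\gamma_j},G_{\gamma_i}]$ and $[F_{\gamma_j},\gamma_i]$ vanish. Combined with the $\mathfrak{sl}(2)$-relations inside $\mathfrak{g}(\gamma_i)$, namely $[F_{\gamma_i},G_{\gamma_i}]=\gamma_i$ and (by direct computation from the definitions of $F_{\gamma_i}$ and $G_{\gamma_i}$) $[F_{\gamma_i},\gamma_i]=-\|\gamma_i\|^2 G_{\gamma_i}$, this yields, for an arbitrary $H=\sum_{j=1}^p t_j F_{\gamma_j}\in\mathfrak{a}$,
$$
[H,G_{\gamma_i}]=t_i\gamma_i,\qquad [H,\gamma_i]=-t_i\|\gamma_i\|^2 G_{\gamma_i}.
$$
Thus $\mathbf{C}G_{\gamma_i}+\mathbf{C}\gamma_i$ is $\mathrm{ad}(\mathfrak{a})$-invariant and $\mathrm{ad}(H)^2$ acts on it as the scalar $-t_i^2\|\gamma_i\|^2$; using $\lambda_i=\|\gamma_i\|F_{\gamma_i}$ and the orthonormality of $\{F_{\gamma_j}\}$, this scalar equals $-\langle\lambda_i,H\rangle^2$.

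Diagonalizing $\mathrm{ad}(H)$ on the plane $\mathbf{C}G_{\gamma_i}+\mathbf{C}\gamma_i$ therefore produces the eigenvalues $\pm\sqrt{-1}\langle\lambda_i,H\rangle$, with common eigenvectors (independent of $H$)
$$
X_{\pm}=\tfrac{1}{2}G_{\gamma_i}\mp\tfrac{\sqrt{-1}}{2\|\gamma_i\|}\gamma_i\in\mathfrak{g}_{\pm\lambda_i},\qquad
Y_{\pm}=\tfrac{1}{2}\gamma_i\pm\tfrac{\sqrt{-1}\|\gamma_i\|}{2}G_{\gamma_i}\in\mathfrak{g}_{\pm\lambda_i}.
$$
Since $G_{\gamma_i}=X_++X_-$ and $\gamma_i=Y_++Y_-$, both elements lie in $\mathfrak{g}_{\lambda_i}+\mathfrak{g}_{-\lambda_i}$. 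Combined with the $\mathfrak{m}$- and $\mathfrak{k}$-memberships from the first paragraph, this gives $G_{\gamma_i}\in\mathfrak{m}_{\lambda_i}$ and $\gamma_i\in\mathfrak{k}_{\lambda_i}$.

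No genuine obstacle is expected: the whole argument is a two-dimensional $\mathfrak{sl}(2)$-computation inside each $\mathfrak{g}(\gamma_i)$, with strong orthogonality decoupling the different indices. The only point needing a bit of care is the factor $\|\gamma_i\|$ relating $t_i$ to $\langle\lambda_i,H\rangle$, coming from the normalization $\lambda_i=\|\gamma_i\|F_{\gamma_i}$.
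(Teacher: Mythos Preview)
Your proof is correct and follows essentially the same approach as the paper: both compute the brackets $[H,G_{\gamma_i}]$ and $[H,\gamma_i]$ for $H=\sum t_jF_{\gamma_j}\in\mathfrak{a}$, using strong orthogonality to kill the $j\neq i$ terms and the $\mathfrak{sl}(2)$-relations inside $\mathfrak{g}(\gamma_i)$ to evaluate the $j=i$ term. The paper stops once it has expressed these brackets as $\pm\langle H,\lambda_i\rangle$ times the other vector (leaving the $\mathfrak{m}$/$\mathfrak{k}$ memberships and the passage to $\mathfrak{g}_{\lambda_i}+\mathfrak{g}_{-\lambda_i}$ implicit), whereas you spell out both the ambient memberships and the explicit eigenvectors $X_\pm,Y_\pm$; this extra diagonalization is not needed since the identity $\mathrm{ad}(H)^2=-\langle\lambda_i,H\rangle^2$ on the plane already characterizes $\mathfrak{k}_{\lambda_i}+\mathfrak{m}_{\lambda_i}$, but it does no harm.
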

\begin{proof}
For $H=\sum x_j\lambda_j\in\mathfrak{a}$, we have
\begin{eqnarray*}
[H,G_{\gamma_i}]
&=&\sum x_j[\|\gamma_j\|F_{\gamma_j},G_{\gamma_i}]
=x_i\|\gamma_i\|[F_{\gamma_i},G_{\gamma_i}]\\
&=&x_i\|\gamma_i\|^2\frac{\gamma_i}{\|\gamma_i\|}
=\langle H,\lambda_i\rangle \frac{\gamma_i}{\|\gamma_i\|},\\
\left[H,\frac{\gamma_i}{\|\gamma_i\|}\right]&=&
\sum x_j\|\gamma_j\|\left[F_{\gamma_j},\frac{\gamma_i}{\|\gamma_i\|}\right]
=-x_i\|\gamma_j\|^2G_{\gamma_j}\\
&=&-\langle H,\lambda_i\rangle G_{\gamma_j},
\end{eqnarray*}
where we used (\ref{eqn:rootspace}). 
\end{proof}

\begin{lem}\label{lem:Cayley}
$$
\mathfrak{k}_\lambda +\mathfrak{m}_\lambda 
=\Phi \left(\sum_{\alpha\in\tilde{R},\pi (\Phi (\alpha ))=\lambda}
(\mathbf{R}F_\alpha +\mathbf{R}G_\alpha )\right).
$$
\end{lem}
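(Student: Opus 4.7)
The plan is to exploit the fact that $\Phi \in \mathrm{Aut}(\mathfrak{g})$ is a Lie algebra automorphism of the compact real form, and that it intertwines the root space decomposition with respect to $\mathfrak{t}$ and the one with respect to $\mathfrak{t}' = \Phi(\mathfrak{t})$. Concretely, I would first show that for every $\alpha \in \tilde{R}$,
\[
\Phi(\tilde{\mathfrak{g}}_\alpha) \;=\; \tilde{\mathfrak{g}}'_{\Phi(\alpha)}
\quad\text{where}\quad
\tilde{\mathfrak{g}}'_\beta
= \{Y \in \mathfrak{g}^{\mathbf C} \mid [H,Y]=\sqrt{-1}\langle\beta,H\rangle Y\; (H \in \mathfrak{t}')\}.
\]
This is a one-line check: if $E_\alpha \in \tilde{\mathfrak{g}}_\alpha$ and $H \in \mathfrak{t}'$, then $[H,\Phi(E_\alpha)] = \Phi([\Phi^{-1}(H),E_\alpha]) = \sqrt{-1}\langle\alpha,\Phi^{-1}(H)\rangle\Phi(E_\alpha) = \sqrt{-1}\langle\Phi(\alpha),H\rangle\Phi(E_\alpha)$, using that $\Phi$, coming from $\exp\mathrm{ad}(\,\cdot\,)$ with a skew element, preserves the biinvariant inner product.

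Next I would relate the restricted root space $\mathfrak{g}_\lambda$ to the $\mathfrak{t}'$-decomposition. Since $\mathfrak{a}\subset\mathfrak{t}'$, $X\in\mathfrak{g}_\lambda$ iff $[H,X]=\sqrt{-1}\langle\lambda,H\rangle X$ for all $H\in\mathfrak{a}$, which, combined with the $\mathfrak{t}'$-root decomposition, gives
\[
\mathfrak{g}_\lambda
= \sum_{\beta\in\tilde{R}',\;\pi(\beta)=\lambda}\tilde{\mathfrak{g}}'_\beta,
\]
because $\langle\beta,H\rangle=\langle\pi(\beta),H\rangle$ for $H\in\mathfrak{a}$ by orthogonality of $\mathfrak{b}$ and $\mathfrak{a}$. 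Applying the intertwining property from the first step, I pull this back via $\Phi$:
\[
\Phi^{-1}(\mathfrak{g}_\lambda+\mathfrak{g}_{-\lambda})
= \sum_{\alpha\in\tilde{R},\;\pi(\Phi(\alpha))=\pm\lambda}\tilde{\mathfrak{g}}_\alpha.
\]

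Finally, I would intersect with $\mathfrak{g}$. Since $\Phi$ preserves the real form $\mathfrak{g}$, $\Phi^{-1}(\mathfrak{k}_\lambda+\mathfrak{m}_\lambda) = \Phi^{-1}(\mathfrak{g}\cap(\mathfrak{g}_\lambda+\mathfrak{g}_{-\lambda})) = \mathfrak{g}\cap\Phi^{-1}(\mathfrak{g}_\lambda+\mathfrak{g}_{-\lambda})$. Using $\mathfrak{g}\cap(\tilde{\mathfrak{g}}_\alpha+\tilde{\mathfrak{g}}_{-\alpha}) = \mathbf{R}F_\alpha+\mathbf{R}G_\alpha$ and pairing each $\alpha\in\tilde{R}_+$ with $-\alpha$ (noting that $\pi\circ\Phi$ is odd), this intersection becomes exactly $\sum_{\alpha\in\tilde{R},\,\pi(\Phi(\alpha))=\lambda}(\mathbf{R}F_\alpha+\mathbf{R}G_\alpha)$. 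Applying $\Phi$ to both sides yields the claimed identity.

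The only subtle point, and the step I would be most careful about, is verifying that $\Phi$ restricted to $\mathfrak{t}$ acts on roots by the same formula as on $\mathfrak{t}$-elements, i.e.\ that the duality $\langle\alpha,\Phi^{-1}(H)\rangle=\langle\Phi(\alpha),H\rangle$ holds; this rests on the fact that the biinvariant metric used to identify $\mathfrak{t}$ with $\mathfrak{t}^\ast$ is preserved by the inner automorphism $\Phi$, which in turn follows because the generator $\sum_j G_{\gamma_j}/\|\gamma_j\|$ is skew-adjoint with respect to $\langle\;,\;\rangle$. Once this is in hand, the rest is bookkeeping about the orthogonal decomposition $\mathfrak{t}'=\mathfrak{a}+\mathfrak{b}$ and the identity $\pi\circ\Phi(\alpha) = \sum_i\langle\alpha,\gamma_i\rangle\lambda_i/\|\gamma_i\|^2$ already derived in the preceding discussion.
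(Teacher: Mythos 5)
Your argument is correct, and it reaches the identity by a genuinely different mechanism than the paper, although both rest on the same underlying facts (that $\Phi$ is an inner automorphism preserving $\langle\;,\;\rangle$, so $\langle\alpha,\Phi^{-1}(H)\rangle=\langle\Phi(\alpha),H\rangle$, and that $\mathfrak{t}'=\Phi(\mathfrak{t})=\mathfrak{a}+\mathfrak{b}$ is maximal abelian with $\mathfrak{b}\perp\mathfrak{a}$). The paper stays inside the real form: it characterizes $\mathbf{R}F_\alpha+\mathbf{R}G_\alpha$ and $\mathfrak{k}_\lambda+\mathfrak{m}_\lambda$ as eigenspaces of $\mathrm{ad}(H)^2$ for $H$ ranging over $\mathfrak{t}$ and over $\mathfrak{a}$ respectively, applies $\Phi$, and---since shrinking the range of $H$ from $\mathfrak{t}'$ to $\mathfrak{a}$ only weakens the eigenvalue condition---obtains only the inclusion $\Phi\bigl(\sum(\mathbf{R}F_\alpha+\mathbf{R}G_\alpha)\bigr)\subset\mathfrak{k}_\lambda+\mathfrak{m}_\lambda$, which it then upgrades to equality by the dimension count $\dim(\mathfrak{k}_\lambda+\mathfrak{m}_\lambda)=2m(\lambda)=2\,\#\{\alpha\mid\pi(\Phi(\alpha))=\lambda\}$. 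You instead pass to the complexification and use the first-order root-space decomposition with respect to $\mathfrak{t}'$: the identity $\mathfrak{g}_\lambda=\sum_{\pi(\beta)=\lambda}\tilde{\mathfrak{g}}'_\beta$ (valid because $\lambda\neq0$ and $\langle\beta,H\rangle=\langle\pi(\beta),H\rangle$ for $H\in\mathfrak{a}$) gives both inclusions at once, so no dimension count or multiplicity formula is needed. The trade-off is that you must handle the descent to the real form explicitly: in the last step you should say that $\mathfrak{g}\cap\sum_{\alpha\in S}\tilde{\mathfrak{g}}_\alpha=\sum_{\alpha\in S,\,\alpha>0}(\mathbf{R}F_\alpha+\mathbf{R}G_\alpha)$ because the set $S=\{\alpha\mid\pi(\Phi(\alpha))=\pm\lambda\}$ is stable under $\alpha\mapsto-\alpha$ and the conjugation of $\mathfrak{g}^{\mathbf C}$ with respect to $\mathfrak{g}$ interchanges $\tilde{\mathfrak{g}}_\alpha$ and $\tilde{\mathfrak{g}}_{-\alpha}$; your remark about ``pairing each $\alpha$ with $-\alpha$'' is exactly this point and just needs to be spelled out.
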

\begin{proof} 
Since
\begin{eqnarray*}
\mathfrak{k}_\lambda +\mathfrak{m}_\lambda 
&=&\{X\in\mathfrak{g}\mid 
[H,[H,X]]=-\langle\lambda ,H\rangle^2 X\quad (H\in\mathfrak{a})\},\\
\mathbf{R}F_\alpha +\mathbf{R}G_\alpha
&=& \mathfrak{g} \cap (\tilde{\mathfrak g}_\alpha + \tilde{\mathfrak g}_{-\alpha})\\
&=&\{X\in\mathfrak{g}\mid 
[H,[H,X]]=-\langle\alpha ,H\rangle^2 X\quad (H\in\mathfrak{t})\},
\end{eqnarray*}
we have
\begin{eqnarray*}
& &\Phi \left(\sum_{\alpha\in\tilde{R},\pi (\Phi (\alpha ))=\lambda}
(\mathbf{R}F_\alpha +\mathbf{R}G_\alpha )\right)\\
&=&\Phi \left(\sum_{\alpha\in\tilde{R},\pi (\Phi (\alpha ))=\lambda}
\{X\in\mathfrak{g}\mid 
[H,[H,X]]=-\langle\alpha ,H\rangle^2 X\quad (H\in\mathfrak{t})\}\right)\\
&=&\sum_{\alpha\in\tilde{R},\pi (\Phi (\alpha ))=\lambda}
\{Y\in\mathfrak{g}\mid 
[\Phi (H),[\Phi (H),Y]]=-\langle\Phi (\alpha ),\Phi (H)\rangle^2Y 
\quad (H\in\mathfrak{t})\}\\
&=&\sum_{\alpha\in\tilde{R},\pi (\Phi (\alpha ))=\lambda}
\{Y\in\mathfrak{g}\mid 
[H,[H,Y]]=-\langle\Phi (\alpha ),H\rangle^2Y \quad (H\in\mathfrak{t}')\}\\
&\subset&\sum_{\alpha\in\tilde{R},\pi (\Phi (\alpha ))=\lambda}
\{Y\in\mathfrak{g}\mid 
[H,[H,Y]]=-\langle\pi (\Phi (\alpha )),H\rangle^2Y \quad (H\in\mathfrak{a})\}\\
&=&\mathfrak{k}_\lambda +\mathfrak{m}_\lambda .
\end{eqnarray*}
Here $\dim (\mathfrak{k}_\lambda +\mathfrak{m}_\lambda )=2m(\lambda )$. 
Since $\Phi$ is a linear isomorphism, we have
\begin{eqnarray*}
\dim \Phi \left(\sum_{\alpha\in \tilde{R}, \pi (\Phi (\alpha ))=\lambda}
(\mathbf{R}F_\alpha +\mathbf{R}G_\alpha )\right)
&=&\dim\sum_{\alpha\in \tilde{R}, \pi (\Phi (\alpha ))=\lambda}
(\mathbf{R}F_\alpha +\mathbf{R}G_\alpha )\\
&=&2\# \{\alpha\in \tilde{R} \mid \pi (\Phi (\alpha ))=\lambda\}\\
&=&2m(\lambda ).
\end{eqnarray*}
Hence we get the assertion.
\end{proof}

\end{document}